          \newtheorem{theorem}{Theorem}[section]
      \newtheorem{corollary}[theorem]{Corollary}
      \newtheorem{lemma}[theorem]{Lemma}
      \newtheorem{remark}[theorem]{Remark}
      \newcommand{\BB}{{\mathbb B}}
      \newcommand{\CC}{{\mathbb C}}
      \newcommand{\DD}{{\mathbb D}}
      \newcommand{\RR}{{\mathbb R}}
      \newcommand{\FF}{{\mathbb F}}
      \newcommand{\TT}{{\mathbb T}}
      \newcommand{\cA}{{\mathcal A}}
      \newcommand{\cB}{{\mathcal B}}
      \newcommand{\cC}{{\mathcal C}}
      \newcommand{\cD}{{\mathcal D}}
      \newcommand{\cE}{{\mathcal E}}
      \newcommand{\cF}{{\mathcal F}}
      \newcommand{\cG}{{\mathcal G}}
      \newcommand{\cH}{{\mathcal H}}
      \newcommand{\cK}{{\mathcal K}}
      \newcommand{\cL}{{\mathcal L}}
      \newcommand{\cM}{{\mathcal M}}
      \newcommand{\cN}{{\mathcal N}}
      \newcommand{\cP}{{\mathcal P}}
      \newcommand{\cR}{{\mathcal R}}
      \newcommand{\cS}{{\mathcal S}}
      \newcommand{\cY}{{\mathcal Y}}
      \newcommand{\cX}{{\mathcal X}}
      \newcommand{\cZ}{{\mathcal Z}}
      \newdimen\expt
      \def\boxit#1{\setbox0\hbox{$\displaystyle{#1}$}
            \hbox{\lower.4\expt
       \hbox{\lower3\expt\hbox{\lower\dp0
            \hbox{\vbox{\hrule height.4\expt
       \hbox{\vrule width.4\expt\hskip3\expt
            \vbox{\vskip3\expt\box0\vskip2\expt}%
       \hskip3\expt\vrule width.4\expt}\hrule height.4\expt}}}}}}
\begin{document}
       \pagestyle{myheadings}
      \markboth{ Gelu Popescu}{  Free    pluriharmonic
      majorants
        and    noncommutative interpolation  }

      \title [     Free    pluriharmonic
      majorants
        and    noncommutative interpolation ]
      {        Free     pluriharmonic
      majorants
        and    noncommutative interpolation
      }
        \author{Gelu Popescu}
\date{November 19, 2007}
      \thanks{Research supported in part by an NSF grant}
      \subjclass[2000]{Primary:  47A57;  47A20; Secondary: 47A56; 46L52 }
      \keywords{Multivariable operator theory;  Noncommutative Hardy
      space; Fock space; Creation operators; Free holomorphic
      function; Free pluriharmonic function; Sub-pluriharmonic curves; Commutant lifting;
      interpolation.
}

      \address{Department of Mathematics, The University of Texas
      at San Antonio \\ San Antonio, TX 78249, USA}
      \email{\tt gelu.popescu@utsa.edu}

\begin{abstract}
 In this paper we initiate the study of sub-pluriharmonic curves
 and free  pluriharmonic majorants on  the noncommutative open  ball
$$
[B(\cH)^n]_1:=\{(X_1,\ldots, X_n)\in B(\cH)^n: \ \|X_1X_1^*+\cdots +
X_nX_n^*\|^{1/2}<1\},
$$
   where $B(\cH)$ is the algebra of
 all bounded linear operators on a Hilbert space $\cH$.
 Several classical results from complex analysis have analogues in
 this noncommutative  multivariable setting.

 We present
 basic properties  for sub-pluriharmonic curves,  characterize the class of sub-pluriharmonic
 curves that admit free pluriharmonic majorants and  find, in
 this case,
 the least free  pluriharmonic majorants. We show that, for any
 free holomorphic function   $\Theta$ on $[B(\cH)^n]_1$, the map
 $$
 \varphi:[0,1)\to C^*(R_1,\ldots, R_n),\quad
 \varphi(r):=\Theta(rR_1,\ldots, rR_n)^*\Theta(rR_1,\ldots, rR_n),
 $$
is a sub-pluriharmonic curve in the Cuntz-Toeplitz algebra
  generated by the right creation operators $R_1,\ldots, R_n$  on the full Fock
space with $n$ generators. We prove that $\Theta$ is in the
noncommutative  Hardy space $H^2_{\bf ball}$  if and only if
$\varphi$  has a free pluriharmonic majorant. In this case, we find
Herglotz-Riesz  and Poisson  type representations for the least
pluriharmonic majorant of $\varphi$. Moreover, we obtain a
characterization of  the unit ball of $H^2_{\bf ball}$ and provide a
parametrization and concrete representations for all free
pluriharmonic majorants of $\varphi$, when $\Theta$ is in the unit
ball of $H^2_{\bf ball}$.

In the second part of this paper, we introduce   a generalized
noncommutative commutant lifting ({\bf GNCL})  problem which
extends, to our noncommutative multivariable setting, several
lifting problems including the classical Sz.-Nagy--Foia\c s
commutant lifting problem and the extensions obtained by
Treil-Volberg, Foia\c s-Frazho-Kaashoek, and Biswas-Foia\c s-Frazho,
as well as their multivariable noncommutative versions. We solve the
{\bf GNCL} problem and, using the results regarding
sub-pluriharmonic curves
 and free pluriharmonic majorants on noncommutative balls,
 we provide a complete   description of all solutions.
 In particular,
 we obtain   a concrete  Schur type  description of all solutions in  the noncommutative commutant lifting
 theorem.
\end{abstract}

      \maketitle

\bigskip

\bigskip

\section*{Introduction}

Noncommutative multivariable operator theory has received a lot of
attention,  in the last two decades, in the attempt of   obtaining a
{\it free}
  analogue of
  Sz.-Nagy--Foia\c s theory \cite{SzF-book}, for row contractions,
  i.e., $n$-tuples of bounded linear operators $(T_1,\ldots, T_n)$ on a Hilbert
  space such that
  $$T_1T_1^*+\cdots +T_nT_n^*\leq I.
  $$
  Significant progress  has been
made regarding
 noncommutative dilation theory and its applications to
    interpolation in several variables (\cite{F}, \cite{Bu}, \cite{Po-models},
  \cite{Po-isometric}, \cite{Po-charact}, \cite{Po-intert}, \cite{DKS},
   \cite{Po-unitary},  \cite{Po-varieties},
    \cite{Po-analytic},
\cite{Po-interpo}, \cite{ArPo2}, \cite{AMc3}, \cite{Po-structure},
\cite{DP}, \cite{BTV}, \cite{Po-nehari}, \cite{Po-entropy},
\cite{Po-commutator},  \cite{Po-pluriharmonic},
\cite{Po-free-hol-interp}, etc.), and unitary invariants for
$n$-tuples of operators (\cite{Po-charact}, \cite{Arv}, \cite{Arv2},
\cite{Po-curvature},
   \cite{Po-similarity}, \cite{BT},
 \cite{Po-entropy}, \cite{Po-unitary},  etc.).

In \cite{Po-holomorphic} and  \cite{Po-pluriharmonic}, we developed
a theory of holomorphic (resp. pluriharmonic)  functions in several
noncommuting (free) variables and provide a framework for  the study
of arbitrary
 $n$-tuples of operators on a Hilbert space. Several classical
 results from complex analysis have
 free analogues in
 this  noncommutative multivariable setting. This theory enhances
 our program to develop a {\it free}
  analogue of
  Sz.-Nagy--Foia\c s theory. In related areas of research,
  we remark
 the work  of Helton,  McCullough, Putinar, and  Vinnikov, on symmetric noncommutative  polynomials (\cite{He}, \cite{He-C},
 \cite{He-C-P},
 \cite{He-C-P2}, \cite{He-C-V}), and the work of Muhly and Solel  on
 representations of tensor algebras over
 $C^*$ correspondences (see \cite{MuSo1}, \cite{MuSo2}).

 The present paper is a natural  continuation of \cite{Po-holomorphic} and
 \cite{Po-pluriharmonic}.
 We initiate here the study of sub-pluriharmonic  curves
 and free  pluriharmonic majorants on noncommutative balls.
 We are lead to a characterization of the noncommutative Hardy space
 $H^2_{\bf ball}$ in terms of  free pluriharmonic majorants, and to a
 Schur type description of the unit ball of $H^2_{\bf ball}$.
 These results are used to solve a multivariable  commutant lifting
 problem and provide a description of all solutions.

To put our work in perspective and present our results, we need to
set up some notation. Let $\FF_n^+$ be the unital free semigroup on
$n$ generators $g_1,\ldots, g_n$ and the identity $g_0$.  The length
of $\alpha\in \FF_n^+$ is defined by $|\alpha|:=0$ if $\alpha=g_0$
and $|\alpha|:=k$ if
 $\alpha=g_{i_1}\cdots g_{i_k}$, where $i_1,\ldots, i_k\in \{1,\ldots, n\}$.
If $(X_1,\ldots, X_n)\in B(\cH)^n$, where $B(\cH)$ is the algebra of
all bounded linear operators on  an infinite dimensional Hilbert
space $\cH$, we denote $X_\alpha:= X_{i_1}\cdots X_{i_k}$ and
$X_{g_0}:=I_\cH$.
We recall  (see \cite{Po-holomorphic}, \cite{Po-pluriharmonic}) that
a map $f:[B(\cH)^n]_1\to B(\cH)$ is called free holomorphic function
with scalar coefficients if it
 has a representation
$$
f(X_1,\ldots, X_n)=\sum_{k=0}^\infty \sum_{|\alpha|=k}
 a_\alpha X_\alpha,\qquad X:=(X_1,\ldots, X_n)\in [B(\cH)^n]_1,
$$
where $\{a_\alpha\}_{\alpha\in \FF_n^+}$ are complex numbers  with \
$\limsup\limits_{k\to\infty} (\sum\limits_{|\alpha|=k}
|a_\alpha|^2)^{1/2k}\leq 1$.  The function $f$ is in the
noncommutative Hardy space $H_{\bf ball}^\infty$  if
$$\|f\|_\infty:=\sup\limits_{X\in [B(\cH)^n]_1} \|f(X)\|<\infty,
$$
 and   in $H_{\bf ball}^2$ if $\|f\|_2:=(\sum_{\alpha\in
\FF_n^+} |a_{\alpha}|^2)^{1/2}< \infty$.

We say that $h:[B(\cH)^n]_1\to B(\cH)$ is a self-adjoint free
pluriharmonic function on $[B(\cH)^n]_1$ if $h=\text{\rm Re}\, f$
for some free holomorphic function $f$. An arbitrary free
pluriharmonic function is a linear combination of self-adjoint free
pluriharmonic functions.  In the particular case when $n=1$, a
function $X\mapsto h(X)$ is free pluriharmonic  on $[B(\cH)]_1$ if
and only if the function $\lambda\mapsto h(\lambda)$ is harmonic on
the open unit disc $\DD:=\{\lambda\in\CC:\ |\lambda|<1\}$. If $n\geq
2$ and $h$ is a free pluriharmonic function on $[B(\cH)^n]_1$, then
its scalar representation $(z_1,\ldots, z_n)\mapsto h(z_1,\ldots,
z_n)$ is a pluriharmonic function  on the open unit ball
$\BB_n:=\{z\in \CC^n:\ \|z\|_2<1\}$, but the converse is not true.
As shown in \cite{Po-holomorphic}, \cite{Po-pluriharmonic}, several
classical
 results from complex analysis, regarding holomorphic (resp.~harmonic) functions, have
 free analogues in
 this  noncommutative multivariable setting.

Let $H_n$ be an $n$-dimensional complex  Hilbert space with
orthonormal
      basis
      $e_1$, $e_2$, $\dots,e_n$, where $n\in\{1,2,\dots\}$.
       We consider the full Fock space  of $H_n$ defined by
      $$F^2(H_n):=\CC 1\oplus \bigoplus_{k\geq 1} H_n^{\otimes k},$$
      where   $H_n^{\otimes k}$ is the (Hilbert)
      tensor product of $k$ copies of $H_n$.
      Define the left  (resp.~right) creation
      operators  $S_i$ (resp.~$R_i$), $i=1,\ldots,n$, acting on $F^2(H_n)$  by
      setting
      $$
       S_i\varphi:=e_i\otimes\varphi, \quad  \varphi\in F^2(H_n),
      $$
       (resp.~$
       R_i\varphi:=\varphi\otimes e_i, \quad  \varphi\in F^2(H_n)
      $).
The noncommutative disc algebra $\cA_n$ (resp.~$\cR_n$) is the norm
closed algebra generated by the left (resp.~right) creation
operators and the identity. The   noncommutative analytic Toeplitz
algebra $F_n^\infty$ (resp.~$\cR_n^\infty$)
 is the the weakly
closed version of $\cA_n$ (resp.~$\cR_n$). These algebras were
introduced in \cite{Po-von} in connection with a noncommutative
version of the classical  von Neumann inequality \cite{von} (see
\cite{Pi-book} for a nice survey). They
 have  been studied
    in several papers
\cite{Po-charact},  \cite{Po-multi},  \cite{Po-funct},
\cite{Po-analytic}, \cite{Po-disc}, \cite{Po-poisson},
 \cite{ArPo2}, and recently in
  \cite{DP1}, \cite{DP2},   \cite{DP},
   \cite{Po-curvature},  \cite{DKP},
   \cite{Po-similarity}, \cite{DLP},   and  \cite{Po-holomorphic}.

In Section 1,  we introduce  the    class of sub-pluriharmonic
curves
  and present
 basic properties. We prove that a self-adjoint (i.e.  $g(r)=g(r)^*$) map $g:[0,1)\to
\overline{ \cA_n^*+ \cA_n}^{\|\cdot\|}$   is a sub-pluriharmonic
curve in the Cuntz-Toeplitz  $C^*$-algebra $C^*(S_1,\ldots, S_n)$
(see \cite{Cu}) if and only if
$$
g(r)\leq { P}_{\frac{r}{\gamma}S}[g(\gamma)]\qquad \text{ for  }\
0\leq r<\gamma<1,
$$
where $P_X[u]$ is the noncommutative Poisson transform of $u$ at
$X$. We obtain a characterization  for  the class of all
sub-pluriharmonic curves
  that admit free
pluriharmonic majorants, and   prove the existence of the least
pluriharmonic majorant. More precisely, we show that  a self-adjoint
mapping  $g:[0,1)\to \overline{ \cA_n^*+ \cA_n}^{\|\cdot\|}$ has a
pluriharmonic majorant  if and only if
\begin{equation*}
\sup_{0<r<1} \|  \tau [g(r)]\|<\infty,
\end{equation*}
where $\tau$ is the linear functional on $B(F^2(H_n))$ defined by
$\tau(f):=\left< f(1),1\right>$.
 In this case, there is a least
pluriharmonic majorant for $g$, namely, the map $$ [0,1)\ni r\to
u(rS_1,\ldots, rS_n)\in \overline{ \cA_n^*+ \cA_n}^{\|\cdot\|},
$$
where the free pluriharmonic function $u$ is given by
$$
u(X_1,\ldots, X_n):=\lim_{\gamma\to 1}{
P}_{\frac{1}{\gamma}X}[g(\gamma)]
$$
for any $X:=(X_1,\ldots, X_n)\in [B(\cH)^n]_1$ and  the limit is in
the norm topology.

Let $\theta$ be an analytic function on the open disc $\DD$. It is
well-known that  the map $\varphi:\DD\to \RR^+$ defined by
$\varphi(\lambda):=|\theta(\lambda)|^2$ is subharmonic. A  classical
result on harmonic majorants (see Section 2.6 in \cite{Du}) states
that $\theta$ is in the Hardy space $H^2(\DD)$ if and only if
$\varphi$ has a harmonic majorant. Moreover, the least harmonic
majorant of $\varphi$ is given by the formula
$$
h(\lambda)=\frac{1}{2\pi}\int_0^{2\pi}
\frac{e^{it}+\lambda}{e^{it}-\lambda}|\theta(e^{it})|^2 dt,\quad
\lambda\in \DD.
$$
In Section 2, we obtain free analogues of these results.  We show
that, for any
 free holomorphic function   $\Theta$ on the noncommutative ball $[B(\cH)^n]_1$, the mapping
 $$
 \varphi:[0,1)\to C^*(R_1,\ldots, R_n),\quad
 \varphi(r)=\Theta(rR_1,\ldots, rR_n)^*\Theta(rR_1,\ldots, rR_n),
 $$
is a sub-pluriharmonic curve in the Cuntz-Toeplitz algebra
  generated by the right creation operators $R_1,\ldots, R_n$  on the full Fock
space with $n$ generators. We prove that a free holomorphic function
$\Theta$   is in the noncommutative Hardy space $H^2_{\bf ball}$ if
and only if $\varphi$  has a pluriharmonic majorant. In this case,
the least pluriharmonic majorant $\psi$  for $\varphi$ is given by $
\psi(r):=\text{\rm Re}\, W(rR_1, \ldots rR_n)$, $r\in[0,1), $
  where $W$ is the free holomorphic function  having the
  Herglotz-Riesz (\cite{Her}, \cite{Ri})
  type representation
  \begin{equation*}
  W(X_1,\ldots,
  X_n)=({\mu}_\theta\otimes \text{\rm id})\left[\left(I+\sum_{i=1}^n R_i^*\otimes
  X_i\right)\left(I-\sum_{i=1}^n R_i^*\otimes
  X_i\right)^{-1}\right]
  \end{equation*}
for $(X_1,\ldots, X_n)\in [B(\cH)^n]_1$, where
$\mu_\theta:\cR_n^*+\cR_n\to  \CC$ is a   positive linear map
uniquely determined by  $\Theta$. Poisson type representations for
the least pluriharmonic majorant are also considered.

In  Section 3,  we   provide a parametrization and concrete
representations for all pluriharmonic majorants of   the
sub-pluriharmonic curve $ \varphi$,
 where  $\Theta$ is in the unit ball of  $H^2_{\bf ball}$.
 We show that, up to a normalization,  all pluriharmonic
 majorants of  $ \varphi$  have the form $\text{\rm Re}\, F$,
 where $F$ is a free holomorphic function  given by
 \begin{equation*}
  F(X)=W(X) + \left(D_\Gamma \otimes
I\right)[I+G(X)][I-G(X)]^{-1}\left(D_\Gamma \otimes I\right), \quad
X \in [B(\cH)^n]_1,
\end{equation*}
  where $G$ is in the unit ball  of the noncommutative
Hardy space  $ H_{\bf ball}^\infty$ with coefficients in $
B(\cD_\Gamma)$, $\Gamma$ is the symbol of $\Theta$, and $\text{\rm
Re}\, W$  is  the least pluriharmonic majorant of $\varphi$
considered above.
 Moreover, $F$ and $G$ uniquely determine
each other. Using this result, we obtain a characterization of  the
unit ball of $H^2_{\bf ball}$. More precisely, we show that
 $\Theta$ is  a free holomorphic function
in the unit ball of  $H_{\text{\bf ball}}^2  $
  if and only if it has a  Schur type representation
\begin{equation*}
  \Theta(X)=L(X)\left[ I_{  \cH}-
\sum_{i=1}^n  X_iM_i(X)\right]^{-1}\quad  \text{ for
  } \ X:=(X_1,\ldots, X_n)\in [B(\cH)^n]_1,
\end{equation*}
 where
$\left[\begin{matrix} L & M_1& \cdots & M_n\end{matrix}\right]^t$
 (${}^t$ is the transpose) is a free holomorphic function in the unit ball of the noncommutative Hardy
space $H^\infty_{\bf ball}$ with
  coefficients in $\CC^n$.

We should mention that all the results  of this paper are obtained
in the   more general setting  of sub-pluriharmonic curves
(resp.~free pluriharmonic majorants) with operator-valued
coefficients.

The celebrated  {\it commutant lifting theorem} ({\bf CLT}),  due to
Sz.-Nagy and Foia\c s  \cite{SzF} for the general case and Sarason
\cite{S} for an important special case, provides the natural
geometric framework for classical  and modern $H^\infty$
interpolation problems. For a detailed analysis of the {\bf CLT} and
its applications to various interpolation and extension problems, we
refer the reader to  the monographs \cite{FF-book} and
\cite{FFGK-book}.

In the present paper, we introduce   a {\it generalized
noncommutative commutant lifting} ({\bf GNCL})  problem, which
extends to a multivariable setting  several lifting problems
including the classical Sz.-Nagy--Foia\c s commutant lifting
\cite{SzF} and the extensions obtained by Treil-Volberg \cite{TV},
Foia\c s-Frazho-Kaashoek \cite{FFK}, and Biswas-Foia\c s-Frazho
\cite{BFF}, as well as  their multivariable noncommutative versions
\cite{Po-isometric}, \cite{Po-intert}, \cite{Po-interpo},
\cite{Po-nehari}, \cite{Po-entropy}, \cite{Po-commutator}. While, in
the classical case, there is a large literature regarding
parametrizations  and Schur (\cite{Sc})  type  representations  of
the set of all solutions in  the {\bf CLT} (see \cite{FF-book},
\cite{FFGK-book}, \cite{FHK}, \cite{FFK}, etc.),  very little is
known in the noncommutative multivariable case (see
\cite{Po-entropy}). In the present paper, we try to fill in  this
gap.

A lifting {\it data set} $\{A,T,V,C,Q\}$  for the   {\bf GNCL}
problem is defined as follows.
  Let $T:=(T_1,\dots, T_n)$, $T_i\in B(\cH)$, be a row
contraction and let $V:=(V_1,\ldots, V_n)$, $V_i\in B(\cK)$, be the
minimal isometric dilation of  $T$ on a Hilbert space $\cK\supset
\cH$, in the sense of \cite{Po-isometric}.
  Let $Q:=(Q_1,\ldots, Q_n)$,  $Q_i\in B(\cG_i,\cX)$,  and
 $C:=(C_1,\ldots, C_n)$, $C_i\in
B(\cG_i,\cX)$,  be such  that
\begin{equation*}
 \left[\delta_{ij} C_i^* C_j\right]_{n\times n}\leq \left[ Q_i^*
 Q_j\right]_{n\times n}.
 \end{equation*}
Consider a contraction  $A\in B(\cX,\cH)$   such that
\begin{equation*}
  T_i AC_i=AQ_i, \qquad i=1,\ldots,n.
\end{equation*}
We say that   $B$ is a {\it contractive interpolant} for $A$ with
respect to $\{A,T,V,C,Q\}$ if  $B\in B(\cX, \cK)$  is a contraction
satisfying the conditions
\begin{equation*}
  P_\cH B=A\quad \text{ and } \
 V_iBC_i=BQ_i,\quad i=1,\ldots,n,
\end{equation*}
where $P_\cH$ is the orthogonal projection from $\cK$ onto $\cH$.
The {\bf GNCL} problem is  to find   contractive interpolants $B$
of $A$ with respect to the data set $\{A,T,V,C,Q\}$.

In Section 4, we  solve the {\bf GNCL}  problem  and, using our
results regarding sub-pluriharmonic curves
 and  free pluriharmonic majorants on noncommutative balls,
 we provide a Schur type description of all solutions in terms of the elements
 of the unit ball of an appropriate noncommutative Hardy space
 $H^\infty_{\bf ball}$ (see Theorem \ref{main}). Our results are new, in
 particular,
 even in the   multivariable settings considered in \cite{Po-isometric}, \cite{Po-intert},
  \cite{Po-interpo},
\cite{Po-nehari}, \cite{Po-entropy}, and \cite{Po-commutator}. Here
we point out  some remarkable special cases.

 If  the data set $\{A,T,V,C,Q\}$ is
 such that $\cG_i=\cX$, $C_i=I_\cX$, $Q_i=Y_i\in
B(\cX)$,  for each $i=1,\ldots,n$,  and $Y:=(Y_1,\ldots, Y_n)$ is a
row isometry,
  we obtain (see Theorem \ref{NCLT}) a parametrization
 and a concrete Schur type  description of all solutions in the {\it noncommutative commutant lifting theorem}
 (\cite{Po-isometric}).  On the other hand,  when $\cG_i=\cX$ and
$C_i=I_\cX$ for $i=1,\ldots, n$,  we obtain  a description of all
solutions of the multivariable version \cite{Po-nehari} of
Treil-Volberg commutant lifting theorem \cite{TV}. The {\bf GNCL}
theorem  also implies  the multivariable version \cite{Po-nehari} of
the weighted commutant lifting theorem of Biswas-Foia\c s-Frazho
\cite{BFF}.

 Finally, we remark that,
when applied to the interpolation theory setting, our results
provide
    complete
 descriptions  of all solutions to the Nevanlinna-Pick (\cite{N}, \cite{Pic}),
 Carath\'eodory-Fej\' er  (\cite{Ca}, \cite{CaFe}), and Sarason (\cite{S}) type interpolation problems  for
  the noncommutative Hardy spaces $H^\infty_{\bf
 ball}$  and   $H^2_{\bf ball}$, as well as consequences to  (norm constrained)
 interpolation on the unit ball of $\CC^n$. These issues will be
 addressed  in a future paper.

\bigskip

\section{ Sub-pluriharmonic curves  in Cuntz-Toeplitz  algebras }

In this section we initiate the study    of sub-pluriharmonic curves
and present some basic properties.  We obtain a characterization for
the class of  all sub-pluriharmonic curves which admit free
pluriharmonic majorants, and  find  the least pluriharmonic
majorants. Some of the results of this section can be extended to
the class of $C^*$-subharmonic curves.

 We need to recall from \cite{Po-pluriharmonic}  a few facts concerning the {\it noncommutative
Berezin transform} associated with a  completely bounded linear  map
$\mu: B(F^2(H_n))\to B(\cE)$, where $\cE$ is a separable Hilbert
space. This is the map
$$\cB_\mu:
B(F^2(H_n))\times [B(\cH)^n]_1\to B(\cE)\otimes_{min} B(\cH)$$
defined by
\begin{equation*}
 \cB_\mu(f,X):=\widetilde{\mu}\left[ B_X^*(f\otimes
I_\cH)B_X\right], \qquad f\in B(F^2(H_n)), \ X:=(X_1,\ldots,X_n)\in
[B(\cH)^n]_1,
\end{equation*}
  where the operator  $B_X \in B(F^2(H_n))\otimes
\cH)$  is given
  by
\begin{equation*}
B_X := (I_{F^2(H_n)} \otimes \Delta_X)
 \left(I-R_1\otimes X_1^*-\cdots -R_n\otimes X_n^*\right)^{-1},
\end{equation*}
$\Delta_X:=(I_\cH-\sum_{i=1}^n X_iX_i^*)^{1/2}$, and
$$\widetilde
\mu:=\mu\otimes \text{\rm id} : B(F^2(H_n)) \otimes_{min} B(\cH)\to
B(\cE)\otimes_{min} B(\cH)
$$
is the completely bounded linear map  uniquely defined by   $
\widetilde \mu(f\otimes Y):= \mu(f)\otimes Y$ for $f\in B(F^2(H_n))
$ and $Y\in B(\cH)$.

An important particular case is the Berezin transform $B_\tau$,
where $\tau$ is the linear functional on $B(F^2(H_n))$ defined by
$\tau(f):=\left<f(1),1\right>$. This  will be called Poisson
transform because it coincides with the {\it noncommutative
 Poisson transform} introduced
in \cite{Po-poisson}. More precisely,  we have
 $$\cB_\tau(f, X)=P_X(f):=K_X^*(f\otimes I)K_X,
 $$
 where  $K_X=B_X |_{1\otimes \cH}:\cH\to F^2(H_n)\otimes
\cH$ is the noncommutative Poisson kernel.
 We recall  from \cite{Po-poisson} that the restriction of $P_X$ to the $C^*$-algebra
$C^*(S_1,\ldots, S_n)$, generated by the left creation operators,
can be extended to the closed
 ball $[B(\cH)^n]_1^-$ by setting
 \begin{equation*}
 P_X[f]:=\lim_{r\to 1} K_{rX}^* (f\otimes I)K_{rX},\qquad X\in
 [B(\cH)^n]_1^-,\ f\in C^*(S_1,\ldots, S_n),
 \end{equation*}
 where $rX:=(rX_1,\ldots, rX_n)$, $r\in (0,1)$,  and
    the limit exists in the operator
 norm topology of $B(\cH)$.
In this case, we have
 \begin{equation*}
P_X(S_\alpha S_\beta^*)=X_\alpha X_\beta^*\ \text{ for any
  } \  \alpha,\beta\in \FF_n^+.
  \end{equation*}
 When $X:=(X_1,\ldots, X_n)$  is a pure $n$-tuple, i.e., $\sum_{|\alpha|=k} X_\alpha
 X_\alpha^*\to 0$,  as $k\to\infty$,  in the strong operator topology,
 then  we have $P_X(f)=K_X^*(f\otimes I)K_X$.
The {\it operator-valued Poisson transform} at $X\in [B(\cH)^n]_1$
is
 the map
${\bf P}_X:B(\cE\otimes F^2(H_n))\to B(\cE\otimes \cH)$  defined by
\begin{equation*}
 {\bf P}_X[u]:=(I_\cE\otimes K_X^*)(u\otimes
I_\cH)(I_\cE\otimes K_X)
\end{equation*}
for any $u\in B(\cE\otimes F^2(H_n))$.
  We refer to \cite{Po-poisson}, \cite{Po-curvature},
\cite{Po-similarity}, and \cite{Po-unitary} for more on
noncommutative Poisson transforms on $C^*$-algebras generated by
isometries.

Let $\cP_n$ be the set of all polynomials  in $S_1,\ldots, S_n$ and
the identity, and let $\text{\boldmath{$\cP$}}_n$ denote (throughout
the paper) the spatial tensor product $B(\cE)\otimes \cP_n$. A  {\it
pluriharmonic   curve} in the (spatial)  tensor product
 $B(\cE)\otimes_{min} C^*(S_1,\ldots, S_n)$ is a  map $\varphi:[0,\gamma)\to
\overline{\text{\boldmath{$\cP$}}_n^*+\text{\boldmath{$\cP$}}_n}^{\|\cdot\|}$
satisfying the Poisson mean value property, i.e.,
\begin{equation*}
  \varphi(r)= {\bf P}_{\frac{r}{t} S}[\varphi(t)]\quad \text{
for } \ 0\leq r<t<\gamma,
\end{equation*}
where $S:=(S_1,\ldots, S_n)$.   According to
\cite{Po-pluriharmonic}, there exists a one-to-one correspondence
$u\mapsto \varphi$ between the set of all free  pluriharmonic
functions on the noncommutative ball of radius $\gamma$,
$[B(\cH)^n]_\gamma$, and the set of all pluriharmonic curves
$\varphi:[0,\gamma)\to
\overline{\text{\boldmath{$\cP$}}_n^*+\text{\boldmath{$\cP$}}_n}^{\|\cdot\|}$
 in the tensor product
 $B(\cE)\otimes_{min} C^*(S_1,\ldots, S_n)$. Moreover, we have
$$u(X)={\bf P}_{\frac{1}{r} X}[\varphi(r)]\quad \text{
for } \   X\in [B(\cH)^n]_r \ \text{ and } \ r\in (0,\gamma),
$$
and $\varphi(r)=u(rS_1,\ldots, rS_n)$ if $r\in [0,\gamma)$.  We also
 solved  in \cite{Po-pluriharmonic}  a Dirichlet type extension problem which
 implies
  that a   free pluriharmonic function  $u$   has continuous extension to the closed ball
$[B(\cH)^n]_\gamma^-$ if and only if
 there exists  a pluriharmonic curve    $\varphi:[0,\gamma]\to
\overline{\text{\boldmath{$\cP$}}_n^*
+\text{\boldmath{$\cP$}}_n}^{\|\cdot\|}$ such that $u(X)={\bf
P}_{\frac{1}{r} X}[\varphi(r)]$ for any $ X\in [B(\cH)^n]_r$  and  $
r\in (0,\gamma]$. We add that $u$ and $\varphi$ uniquely determine
each other and satisfy the equations $u(rS_1,\ldots,
rS_n)=\varphi(r)$ if $r\in [0,\gamma)$ and
$\varphi(\gamma)=\lim\limits_{r\to \gamma} u(rS_1,\ldots, rS_n)$,
where the convergence is in the operator norm topology. Now, we can
introduce  the class of {\it sub-pluriharmonic curves} in the tensor
algebra $B(\cE)\otimes_{min} C^*(S_1,\ldots, S_n)$. We say that a
map
$$\psi:[0,1)\to \overline{\text{\boldmath{$\cP$}}_n+
\text{\boldmath{$\cP$}}_n}^{\|\cdot\|}\quad \text{ with }\ \psi(r)=\psi(r)^*, r\in
[0,1), $$
 is  a {\it  sub-pluriharmonic} curve   provided that  for each $\gamma\in (0,1)$
 and each self-adjoint  pluriharmonic  curve
  $\varphi: [0,\gamma]\to \overline{\text{\boldmath{$\cP$}}_n^* +
  \text{\boldmath{$\cP$}}_n}^{\|\cdot\|}$,
  if
 $\psi(\gamma)\leq \varphi(\gamma)$,   then
 $$
\psi(r)\leq \varphi(r)\ \text{ for any } \ r\in [0,\gamma].
 $$

Our first result is the following characterization  of
sub-pluriharmonic curves.

\begin{theorem}
\label{chara-ine} Let $g:[0,1)\to
\overline{\text{\boldmath{$\cP$}}_n^*+\text{\boldmath{$\cP$}}_n}^{\|\cdot\|}$
be a map with $g(r)=g(r)^*$ for $r\in [0,1)$. Then $g$ is
sub-pluriharmonic    if and only if
$$
g(r)\leq {\bf P}_{\frac{r}{\gamma}S}[g(\gamma)]\qquad \text{ for  }\
0\leq r<\gamma<1.
$$
The equality holds if $g$ is   pluriharmonic.
\end{theorem}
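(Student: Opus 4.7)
The plan is to prove the two implications separately, using as principal tools (i) the Poisson mean value property that characterizes pluriharmonic curves, recalled just above the statement, and (ii) the complete positivity of the operator-valued Poisson transform ${\bf P}_X$ for $X$ in the closed unit ball.

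For the sufficiency direction ($\Leftarrow$) I would fix $\gamma \in (0,1)$ together with an arbitrary self-adjoint pluriharmonic curve $\varphi:[0,\gamma] \to \overline{\text{\boldmath{$\cP$}}_n^* + \text{\boldmath{$\cP$}}_n}^{\|\cdot\|}$ satisfying $g(\gamma) \leq \varphi(\gamma)$. The Poisson mean value property specialized to $t = \gamma$ gives $\varphi(r) = {\bf P}_{\frac{r}{\gamma}S}[\varphi(\gamma)]$ for every $r \in [0,\gamma)$; because ${\bf P}_{\frac{r}{\gamma}S}$ is completely positive, the inequality $g(\gamma) \leq \varphi(\gamma)$ transports to ${\bf P}_{\frac{r}{\gamma}S}[g(\gamma)] \leq \varphi(r)$, and chaining with the hypothesized inequality on $g$ yields $g(r) \leq \varphi(r)$. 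The endpoint $r = \gamma$ is just the assumption, so $g$ is sub-pluriharmonic.

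For the necessity direction ($\Rightarrow$), the strategy is to exhibit, for each fixed pair $0 \leq r < \gamma < 1$, a self-adjoint pluriharmonic curve on $[0,\gamma]$ that \emph{touches} $g$ at the right endpoint. I would define
$$ u(X) := {\bf P}_{\frac{1}{\gamma} X}[g(\gamma)], \qquad X \in [B(\cH)^n]_\gamma^-. $$
Since $g(\gamma)=g(\gamma)^*$ lies in $\overline{\text{\boldmath{$\cP$}}_n^* + \text{\boldmath{$\cP$}}_n}^{\|\cdot\|}$, the map $u$ is a self-adjoint free pluriharmonic function on $[B(\cH)^n]_\gamma$ that extends continuously to the closed ball; the correspondence recalled just before the theorem then associates to $u$ the pluriharmonic curve $\varphi(t) := u(tS_1,\ldots,tS_n) = {\bf P}_{\frac{t}{\gamma}S}[g(\gamma)]$ on $[0,\gamma]$. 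Applying the sub-pluriharmonic hypothesis on $g$ to this $\varphi$, provided $\varphi(\gamma) = g(\gamma)$, delivers precisely the desired inequality $g(r) \leq {\bf P}_{\frac{r}{\gamma}S}[g(\gamma)]$.

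The main point to verify is the identity $\varphi(\gamma) = g(\gamma)$, that is, that ${\bf P}_S$ acts as the identity on $\overline{\text{\boldmath{$\cP$}}_n^* + \text{\boldmath{$\cP$}}_n}^{\|\cdot\|}$. This reduces, via the formula $P_X[S_\alpha S_\beta^*] = X_\alpha X_\beta^*$ recalled in this section, to the trivial specialization $X = S$ followed by continuity on the norm closure; this is the only place that requires any real bookkeeping. Once both implications are in hand, the equality clause for pluriharmonic $g$ is automatic: the Poisson mean value property itself reads $g(r) = {\bf P}_{\frac{r}{\gamma}S}[g(\gamma)]$, so the characterizing inequality becomes an equality.
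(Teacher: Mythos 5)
Your proposal is correct and follows essentially the same route as the paper: the sufficiency direction uses the Poisson mean value property plus positivity of ${\bf P}_{\frac{r}{\gamma}S}$ exactly as in the paper's proof, and the necessity direction constructs the same touching curve $\varphi(t)={\bf P}_{\frac{t}{\gamma}S}[g(\gamma)]$ from the pluriharmonic function $u(X)={\bf P}_{\frac{1}{\gamma}X}[g(\gamma)]$. The only cosmetic difference is at the endpoint: the paper reads off $\varphi(\gamma)=g(\gamma)$ from the Dirichlet-type extension result ($g(\gamma)=\lim_{t\to\gamma}u(tS_1,\ldots,tS_n)$), whereas you verify directly that ${\bf P}_S$ is the identity on $\overline{\text{\boldmath{$\cP$}}_n^*+\text{\boldmath{$\cP$}}_n}^{\|\cdot\|}$ via $P_X[S_\alpha S_\beta^*]=X_\alpha X_\beta^*$ and norm continuity, which is a valid and equivalent bookkeeping step.
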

\begin{proof}
Assume that $g$ is sub-pluriharmonic  and let $0\leq r<\gamma<1$.
Since $g(\gamma)\in
\overline{\text{\boldmath{$\cP$}}_n^*+\text{\boldmath{$\cP$}}_n}^{\|\cdot\|}$,
one can use Theorem 4.1 from \cite{Po-pluriharmonic}, to deduce that
the map $u:[B(\cH)^n]_\gamma\to B(\cE)\otimes_{min} B(\cH)$ defined
by
\begin{equation*}
 u(X)={\bf P}_{\frac{1}{\gamma} X}[g(\gamma)]\quad \text{ for } \
X\in [B(\cH)^n]_\gamma
\end{equation*}
is free pluriharmonic on $[B(\cH)^n]_\gamma$ and   has a continuous
extension to $[B(\cH)^n]_\gamma^-$.  In this case, we have
$g(\gamma)=\lim_{t\to \gamma} u(tS_1,\ldots, tS_n)$.  Moreover, if
$\varphi:[0,\gamma]\to
\overline{\text{\boldmath{$\cP$}}_n^*+\text{\boldmath{$\cP$}}_n}^{\|\cdot\|}$
is  the pluriharmonic curve uniquely associated with  the free
pluriharmonic  function $u$, then
$$\varphi(r)={\bf P}_{\frac{r}{\gamma} S}[g(\gamma)]\quad \text{ for } \
 r\in [0,\gamma].
 $$
Since $g$ is a sub-pluriharmonic curve and
$g(\gamma)=\varphi(\gamma)$, we deduce that
$$
g(r)\leq \varphi(r)= {\bf P}_{\frac{r}{\gamma}S}[g(\gamma)]\quad
\text{ for any }\  r\in [0,\gamma].
$$

Conversely, assume that  $g$ has the property that
\begin{equation}
\label{uP}
 g(r)\leq {\bf P}_{\frac{r}{\gamma}S}[g(\gamma)]\quad
\text{ for any }\ 0\leq r<\gamma<1.
\end{equation}
Let $\varphi:[0,\gamma]\to
\overline{\text{\boldmath{$\cP$}}_n^*+\text{\boldmath{$\cP$}}_n}^{\|\cdot\|}$
be  a pluriharmonic curve such that   $\varphi(r)=\varphi(r)^*$ for
$r\in [0,\gamma]$, and assume that  $g(\gamma)\leq \varphi(\gamma)$.
Since $g(\gamma)$ and $\varphi(\gamma)$ are in $B(\cE)\otimes_{min}
C^*(S_1,\ldots, S_n)$ and the noncommutative Poisson transform  is a
positive map, we deduce that
\begin{equation}
\label{P<P} {\bf P}_{\frac{r}{\gamma}S}[g(\gamma)]\leq {\bf
P}_{\frac{r}{\gamma}S}[\varphi(\gamma)],\qquad  0\leq r<\gamma.
\end{equation}
 On the other
hand, since $\varphi$ is a  pluriharmonic curve on $[0,\gamma]$, we
have $\varphi(r)={\bf P}_{\frac{r}{\gamma}S}[\varphi(\gamma)]$ for
$0\leq r<\gamma$. Hence, using relations \eqref{uP} and \eqref{P<P},
we deduce that
 $g(r)\leq \varphi(r)$ for any $r\in [0,\gamma]$.
The proof is complete.
\end{proof}

As a consequence of Theorem \ref{chara-ine},  we remark that if
$u_1,\ldots, u_k$ are sub-pluriharmonic curves and
$\lambda_1,\ldots, \lambda_k$ are positive numbers then $\lambda_1
u_1+\cdots +\lambda_k u_k$ is sub-pluriharmonic.  Notice also that a
self-adjoint function $u:[0,1)\to
\overline{\text{\boldmath{$\cP$}}_n^*+\text{\boldmath{$\cP$}}_n}^{\|\cdot\|}$
is pluriharmonic if and only if both $u$  and $-u$ are
sub-pluriharmonic.

We recall   (see   Lemma 2.3 from \cite{Po-pluriharmonic}) that if
$\gamma_1>0$ and \ $0\leq\gamma_j\leq 1$ for $j=2,\ldots, k$, then
the noncommutative Poisson transform has the property
$$
P_{\gamma_1\cdots \gamma_k X}=P_{\gamma_1 X}\circ P_{\gamma_2
S}\circ \cdots \circ P_{\gamma_k S}
$$
for any $X\in [B(\cH)^n]_{\frac{1}{\gamma_1}}$, where
$S:=(S_1,\ldots, S_n)$ is the $n$-tuple of left creation operators
on the Fock space $F^2(H_n)$. Moreover, we have
\begin{equation}
\label{PPPP}
 {\bf P}_{\gamma_1\cdots \gamma_k X}[g]=\left({\bf
P}_{\gamma_1 X}\circ {\bf P}_{\gamma_2 S}\circ \cdots \circ {\bf
P}_{\gamma_k S}\right)[g]
\end{equation}
for any $g\in B(\cE)\otimes_{min} B(F^2(H_n))$.

\begin{corollary}
\label{ine-subha} Let $g:[0,1)\to
\overline{\text{\boldmath{$\cP$}}_n^*+\text{\boldmath{$\cP$}}_n}^{\|\cdot\|}$
be a sub-pluriharmonic curve  and let  $\tau$ be the linear
functional on $B(F^2(H_n))$ defined by
$\tau(f):=\left<f(1),1\right>$. Then
\begin{enumerate}
\item[(i)] $ {\bf P}_{\frac{r}{\gamma_1}S}[g(\gamma_1)]\leq {\bf
P}_{\frac{r}{\gamma_2}S}[g(\gamma_2)]$ \  for  $ 0< r<
\gamma_1<\gamma_2<1; $
 \item[(ii)]
$g(0)\leq \widetilde \tau [g(\gamma_1)]\leq \widetilde \tau
[g(\gamma_2)]$ \ for $0 < \gamma_1<\gamma_2<1$, where $\widetilde
\tau:=\tau\otimes \text{\rm id}$;
\item[(iii)]  $\widetilde \tau [g(0)]\leq \widetilde \tau
[g(\gamma_1)]$ \  for $0\leq \gamma_1<1$.
\end{enumerate}
\end{corollary}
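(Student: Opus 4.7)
The plan is to derive each of the three inequalities as an immediate consequence of Theorem~\ref{chara-ine} (the sub-mean-value inequality), combined with the semigroup identity \eqref{PPPP} for the noncommutative Poisson transform and the observation that evaluation at the origin coincides with the linear functional $\widetilde{\tau}$.

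For part (i), I would invoke Theorem~\ref{chara-ine} with the pair $(\gamma_1,\gamma_2)$ playing the role of $(r,\gamma)$, obtaining $g(\gamma_1)\le \mathbf{P}_{(\gamma_1/\gamma_2)S}[g(\gamma_2)]$. Since the Poisson transform is a completely positive map on $B(\cE)\otimes_{\min} C^*(S_1,\ldots,S_n)$, applying $\mathbf{P}_{(r/\gamma_1)S}$ to both sides preserves the inequality, and the composition identity \eqref{PPPP} (with $k=2$) simplifies the right-hand side to $\mathbf{P}_{(r/\gamma_2)S}[g(\gamma_2)]$, which is what is wanted.

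For part (ii), the first inequality will come from specializing Theorem~\ref{chara-ine} to $r=0$: a direct computation using $K_0 h = 1\otimes h$ shows that $\mathbf{P}_{0\cdot S}[u]=\widetilde{\tau}(u)\otimes I$, giving $g(0)\le \widetilde{\tau}[g(\gamma_1)]$ under the standing identification. For the second inequality, I would again apply Theorem~\ref{chara-ine} to obtain $g(\gamma_1)\le \mathbf{P}_{(\gamma_1/\gamma_2)S}[g(\gamma_2)]$ and then act on both sides by the positive linear map $\widetilde{\tau}$. The key fact $\widetilde{\tau}\circ \mathbf{P}_{\lambda S}=\widetilde{\tau}$ for $\lambda\in[0,1)$ drops out of the composition formula \eqref{PPPP} (by taking one of the scalings to be zero); it can alternatively be checked on the spanning set $\{A\otimes S_\alpha S_\beta^*\}$, where both sides vanish unless $\alpha=\beta=g_0$.

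For part (iii), I would apply the positive map $\widetilde{\tau}$ to the first inequality in (ii). Since $\widetilde{\tau}$ acts as the identity on the image of $B(\cE)$ embedded via $B\mapsto B\otimes I$, the right-hand side simplifies back to $\widetilde{\tau}[g(\gamma_1)]$, giving the conclusion. The only mildly delicate step throughout is the bookkeeping around the identifications $\mathbf{P}_{0}=\widetilde{\tau}$ and $\widetilde{\tau}\circ \mathbf{P}_{\lambda S}=\widetilde{\tau}$, both of which are essentially restatements of \eqref{PPPP} and require no ideas beyond those already set up.
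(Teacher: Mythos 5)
Your proposal is correct and follows essentially the same route as the paper: Theorem~\ref{chara-ine} plus positivity of the Poisson transform and the composition identity \eqref{PPPP} for part (i), and the identification $\mathbf{P}_{0}=\widetilde{\tau}$ (equivalently $\widetilde{\tau}\circ\mathbf{P}_{\lambda S}=\widetilde{\tau}$) for parts (ii) and (iii). The only cosmetic difference is that the paper obtains the second inequality of (ii) by letting $r\to 0$ in the inequality of (i) and invoking continuity of the Berezin transform, whereas you apply $\widetilde{\tau}$ directly; these amount to the same computation.
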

\begin{proof}
According to Theorem \ref{chara-ine}, we have
\begin{equation}
\label{gP} g(r)\leq {\bf P}_{\frac{r}{\gamma_2}S}[g(\gamma_2)]\quad
\text{ for  }\ 0\leq r<\gamma_2<1,
\end{equation}
which implies
\begin{equation*}
  g(\gamma_1)\leq {\bf
P}_{\frac{\gamma_1}{\gamma_2}S}[g(\gamma_2)]\quad \text{ for  }\
0\leq r<\gamma_1<\gamma_2<1.
\end{equation*}
Hence, using  \eqref{PPPP} and the positivity of the noncommutative
Poisson transform, we deduce that
\begin{equation}
\label{PPPP2}
 {\bf P}_{\frac{r}{\gamma_1}S}[g(\gamma_1)] \leq
\left({\bf P}_{\frac{r}{\gamma_1}S} \circ {\bf
P}_{\frac{\gamma_1}{\gamma_2}S}\right)[g(\gamma_2)]={\bf
P}_{\frac{r}{\gamma_2}S} [g(\gamma_2)]
\end{equation}
for $0< r<\gamma_1<\gamma_2<1$. Passing to the limit in
\eqref{PPPP2}, as $r\to 0$, and using the continuity of the
noncommutative Berezin transform, we deduce that
$$
\widetilde \tau [g(\gamma_1)]={\bf P}_0[g(\gamma_1)]\leq {\bf
P}_0[g(\gamma_2)]=\widetilde \tau [g(\gamma_2)].
$$
 Notice also that relation \eqref{gP}, implies
$$
g(0)\leq {\bf P}_0[g(\gamma_1)]=\widetilde \tau [g(\gamma_1)]\quad
\text{ for } 0<\gamma_1<1.
$$
Part (iii) is now obvious. This completes the proof.
\end{proof}

If $g$ is a   sub-pluriharmonic  curve  on $[0,1)$     and $h$ is
pluriharmonic  on the same interval such that $g(r)\leq h(r)$, $r\in
[0,1)$, we say that $h$ is a {\it pluriharmonic majorant} for $g$.
The next result provides a characterization of all sub-pluriharmonic
curves which admit free pluriharmonic majorants. In this case, we
find the least pluriharmonic majorant.

\begin{theorem}
\label{har-major}
 Let $g:[0,1)\to
\overline{\text{\boldmath{$\cP$}}_n^*+\text{\boldmath{$\cP$}}_n}^{\|\cdot\|}$
be a sub-pluriharmonic curve. Then there exists a pluriharmonic
majorant of $g$ if and only if
\begin{equation}
\label{supr}
 \sup_{0<r<1} \|\widetilde \tau [g(r)]\|<\infty,
\end{equation}
where $\widetilde \tau:=\tau\otimes \text{\rm id}$ and  $\tau$ is
the linear functional on $B(F^2(H_n))$ defined by
$\tau(f):=\left<f(1),1\right>$.  In this case, there is a least
pluriharmonic majorant for $g$, namely, the map $$ [0,1)\ni r\to
u(rS_1,\ldots, rS_n)\in
\overline{\text{\boldmath{$\cP$}}_n^*+\text{\boldmath{$\cP$}}_n}^{\|\cdot\|},
$$
where the free pluriharmonic function $u$ is given by
$$
u(X_1,\ldots, X_n):=\lim_{\gamma\to 1}{\bf
P}_{\frac{1}{\gamma}X}[g(\gamma)]
$$
for any $X:=(X_1,\ldots, X_n)\in [B(\cH)^n]_1$ and  the limit is in
the norm topology.
\end{theorem}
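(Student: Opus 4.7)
For necessity, suppose $h$ is a pluriharmonic majorant of $g$. Applying the positive map $\widetilde\tau$ to $g(r)\le h(r)$ yields $\widetilde\tau[g(r)]\le\widetilde\tau[h(r)]$. The point is that $\widetilde\tau[h(r)]$ is independent of $r$: since $\tau(S_\alpha S_\beta^*)=\delta_{\alpha,g_0}\delta_{\beta,g_0}$, one has $\widetilde\tau\circ{\bf P}_{tS}=\widetilde\tau$ on $B(\cE)\otimes_{min}C^*(S_1,\dots,S_n)$, and the pluriharmonic mean-value identity $h(r)={\bf P}_{(r/\gamma)S}[h(\gamma)]$ then forces $\widetilde\tau[h(r)]=\widetilde\tau[h(\gamma)]=h(0)$. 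Combined with the lower bound $g(0)\le\widetilde\tau[g(r)]$ from Corollary~\ref{ine-subha}(ii), this gives $\|\widetilde\tau[g(r)]\|\le\max(\|g(0)\|,\|h(0)\|)$, which is \eqref{supr}.

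For sufficiency, assume \eqref{supr}. For each $\gamma\in(0,1)$ I would set
\[
u_\gamma(X):={\bf P}_{X/\gamma}[g(\gamma)],\qquad X\in[B(\cH)^n]_\gamma.
\]
By Theorem~4.1 of \cite{Po-pluriharmonic}, $u_\gamma$ is a self-adjoint free pluriharmonic function on $[B(\cH)^n]_\gamma$ with continuous extension to the closed ball, whose associated pluriharmonic curve on $[0,\gamma]$ is $r\mapsto{\bf P}_{(r/\gamma)S}[g(\gamma)]$. The composition rule \eqref{PPPP} then yields monotonicity in $\gamma$: for $0<\gamma_1<\gamma_2<1$ and $X\in[B(\cH)^n]_{\gamma_1}$,
\[
u_{\gamma_2}(X)-u_{\gamma_1}(X)={\bf P}_{X/\gamma_1}\bigl[{\bf P}_{(\gamma_1/\gamma_2)S}[g(\gamma_2)]-g(\gamma_1)\bigr]\ge 0,
\]
the bracket being nonnegative by Theorem~\ref{chara-ine} and ${\bf P}_{X/\gamma_1}$ being positive.

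The technical heart is producing the norm limit $u(X):=\lim_{\gamma\to 1}u_\gamma(X)$ for each $X\in[B(\cH)^n]_1$. Hypothesis \eqref{supr} yields the uniform bound $\|u_\gamma(0)\|=\|\widetilde\tau[g(\gamma)]\|\le M$. To show the full increasing family is norm-bounded and in fact norm-Cauchy, my plan is to write
\[
u_{\gamma'}(X)-u_\gamma(X)={\bf P}_{X/\gamma}\bigl[w_{\gamma,\gamma'}\bigr],\qquad w_{\gamma,\gamma'}:={\bf P}_{(\gamma/\gamma')S}[g(\gamma')]-g(\gamma)\ge 0,
\]
and exploit the crucial identity $\widetilde\tau[w_{\gamma,\gamma'}]=\widetilde\tau[g(\gamma')]-\widetilde\tau[g(\gamma)]$, obtained from $\widetilde\tau\circ{\bf P}_{(\gamma/\gamma')S}=\widetilde\tau$. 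The explicit Poisson-kernel representation ${\bf P}_{X/\gamma}[\,\cdot\,]=(I_\cE\otimes K_{X/\gamma}^*)(\,\cdot\,\otimes I_\cH)(I_\cE\otimes K_{X/\gamma})$, valid for the strict row contraction $X/\gamma$ whenever $\gamma$ exceeds $\|\sum X_iX_i^*\|^{1/2}$, together with the monotone norm-boundedness of the net $\widetilde\tau[g(\gamma)]$, should drive the norm of this difference to zero as $\gamma,\gamma'\to 1$. This norm-convergence upgrade from the automatic SOT limit is the main obstacle and will rely on the full Poisson/Berezin formalism of \cite{Po-pluriharmonic}. Once $u$ is produced pointwise, pluriharmonicity of $u$ on $[B(\cH)^n]_1$ follows by passing the mean-value identity $u_\gamma(X)={\bf P}_{X/t}[u_\gamma(tS)]$ through the norm limit for each $t<1$.

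For the remaining assertions: Theorem~\ref{chara-ine} gives $u_\gamma(rS)={\bf P}_{(r/\gamma)S}[g(\gamma)]\ge g(r)$ for $\gamma\in(r,1)$, so letting $\gamma\to 1$ shows $r\mapsto u(rS)$ is a pluriharmonic majorant of $g$. For minimality, let $h$ be any pluriharmonic majorant of $g$; positivity of ${\bf P}_{(r/\gamma)S}$ applied to $g(\gamma)\le h(\gamma)$, combined with $h(r)={\bf P}_{(r/\gamma)S}[h(\gamma)]$, yields $u_\gamma(rS)\le h(r)$ for every $\gamma>r$, and passing $\gamma\to 1$ completes the argument.
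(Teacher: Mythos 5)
Your argument follows the same route as the paper's in both directions. The necessity half is complete and correct: the sandwich $g(0)\le \widetilde\tau[g(r)]\le h(0)$, obtained from positivity of $\widetilde\tau$, Corollary \ref{ine-subha}, and the invariance $\widetilde\tau[h(r)]=h(0)$ for pluriharmonic curves, is exactly the paper's argument. The sufficiency half sets up the same objects: the family $u_\gamma(X)={\bf P}_{X/\gamma}[g(\gamma)]$, its monotonicity in $\gamma$ via Theorem \ref{chara-ine} and the composition rule \eqref{PPPP}, the bound $\|u_\gamma(0)\|=\|\widetilde\tau[g(\gamma)]\|\le M$, and the final majorant/minimality arguments all match the paper.

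The gap is precisely where you flag it. The paper does not prove the norm convergence of $u_\gamma$ by hand; it invokes the Harnack-type convergence theorem of \cite{Po-pluriharmonic}, whose hypotheses (an increasing family of self-adjoint free pluriharmonic functions with $\sup_\gamma\|u_\gamma(0)\|<\infty$) are exactly what you have verified. Your proposed direct substitute -- controlling $\|{\bf P}_{X/\gamma}[w_{\gamma,\gamma'}]\|$ through $\widetilde\tau[w_{\gamma,\gamma'}]=\widetilde\tau[g(\gamma')]-\widetilde\tau[g(\gamma)]$ -- is not yet a proof, and as sketched it runs into a real obstruction in the operator-coefficient setting: the net $\widetilde\tau[g(\gamma)]$ is monotone increasing and norm-bounded in $B(\cE)$, which gives only SOT convergence when $\cE$ is infinite dimensional, so even with a Harnack inequality of the form $0\le{\bf P}_{rS}[w]\le \frac{1+r}{1-r}\bigl(\widetilde\tau[w]\otimes I\bigr)$ you cannot conclude that the differences are norm-Cauchy from monotonicity alone. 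The clean repair is to do what the paper does: quote the Harnack-type convergence theorem from \cite{Po-pluriharmonic} at this point rather than attempting to reprove it. With that substitution the rest of your write-up (that $u$ majorizes $g$, and that any pluriharmonic majorant $h$ satisfies $h(r)\ge u_\gamma(rS)$ for all $\gamma>r$, hence $h\ge u$) is correct and identical to the paper's.
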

\begin{proof}
Assume that $u$ is a pluriharmonic majorant for $g$, i.e.,
\begin{equation*}   g(\gamma)\leq
u(\gamma)\quad \text{ for any }\  \gamma\in [0,1).
\end{equation*}
Since $\widetilde \tau$ is a positive map, we deduce that
$\widetilde \tau [g(\gamma)]\leq \widetilde \tau [u(\gamma)]$,
$\gamma\in [0,1)$. According to Theorem \ref{chara-ine}, we have
$$g(0)\leq \widetilde \tau [g(\gamma)]\qquad \text{ for } \ \gamma\in
(0,1).
$$
Since $u$ is a pluriharmonic function, we have $\widetilde \tau
[u(\gamma)]=u(0)$. Using these relations, we deduce that
$$
g(0)\leq \widetilde \tau [g(\gamma)]\leq  u(0)\quad \text{ for } \
\gamma\in (0,1).
$$
Taking into account that the operators $g(0)$, $u(0)$, and
$\widetilde \tau [g(\gamma)]$, $\gamma\in (0,1)$, are selfadjoint,
one can easily  obtain \eqref{supr}.

Conversely, assume that relation  \eqref{supr} holds. Define
$h_\gamma:[0,\gamma)\to
\overline{\text{\boldmath{$\cP$}}_n^*+\text{\boldmath{$\cP$}}_n}^{\|\cdot\|}$
by setting \begin{equation} \label{hga} h_\gamma(r):= {\bf
P}_{\frac{r}{\gamma}S}[g(\gamma)]\  \text{ for } 0\leq r<\gamma.
\end{equation}
Since $g$ is sub-pluriharmonic,  $h_\gamma$ is a pluriharmonic
majorant for $g$ on $[0,\gamma)$. Notice that if $f:[0,1)\to
\overline{\text{\boldmath{$\cP$}}_n^*+\text{\boldmath{$\cP$}}_n}^{\|\cdot\|}$
is continuous and pluriharmonic on $[0,\gamma]$ such that $f(r)\geq
g(r)$ for any $r\in [0,\gamma]$, then $f(r)\geq h_\gamma(r)$ for
$r\in [0,\gamma)$. Indeed, since $f(\gamma)\geq g(\gamma)$, the
Poisson transform   is a positive map, and $f$ is pluriharmonic, we
have
$$
f(r)= {\bf P}_{\frac{r}{\gamma}S}[f(\gamma)]\geq {\bf
P}_{\frac{r}{\gamma}S}[g(\gamma)]=h_\gamma(r)
$$
for $r\in [0,\gamma)$. This shows that $h_\gamma$ is the least
pluriharmonic majorant of $g$ on $[0,\gamma)$.

Now let $0<\gamma< \gamma'<1$.  Since $h_{\gamma'}$ is pluriharmonic
majorant for $g$ on $[0,\gamma')$, it is also a pluriharmonic
majorant for $g$ on $[0,\gamma)$. Due to  our result above, we have
$$
h_\gamma(r)\leq h_{\gamma'}(r)\quad \text{ for any } \
r\in[0,\gamma).
$$
Due to relation \eqref{hga}, we have $h_\gamma(0)=\widetilde \tau
[g(\gamma)]$ for $0<\gamma <1$ and, therefore,
$$
\sup_{0<\gamma<1}\|h_\gamma(0)\|=\sup_{0<\gamma<1}\|\widetilde \tau
[g(\gamma)]\|<\infty.
$$
Now, each $h_\gamma$ generates a unique pluriharmonic function on
$[B(\cH)^n]_\gamma$ by setting
\begin{equation}
\label{uga} u_\gamma(X):={\bf P}_{\frac{1}{t}X}[h_\gamma(t)]\quad
\text{ for }\ X\in [B(\cH)^n]_t  \text{ and }  t\in [0,\gamma).
\end{equation}
If $0<\gamma< \gamma'<1$, then $$u_\gamma(X)\leq u_{\gamma'}(X)\quad
\text{ for} \ X\in [B(\cH)^n]_\gamma.
$$
Since
$$\sup_{0<\gamma<1}\|h_\gamma(0)\|=\sup_{0<\gamma<1}\|u_\gamma(0)\|<\infty,
$$
we can use the Harnack type convergence theorem from
\cite{Po-pluriharmonic} to deduce the existence  of a pluriharmonic
function $u$ on $[B(\cH)^n]_1$ such that its radial function
satisfies  $u(r):=u(rS_1,\ldots, rS_n)=\lim_{\gamma\to 1}
u_\gamma(r)$ for any $r\in [0,1)$, where the convergence is in the
operator norm topology. On the other hand, using  relation
\eqref{uga}, the fact that $h_\gamma$ is pluriharmonic on
$[0,\gamma)$, and relation \eqref{hga}, we have
$$
u_\gamma (r)={\bf P}_{\frac{r}{t}S}[h_\gamma(t)]=h_\gamma(r)={\bf
P}_{\frac{r}{\gamma}S}[g(\gamma)]\quad \text{ for } \  0\leq
r<t<\gamma.
$$
Since $h_\gamma\geq g$ on $[0,\gamma)$ for each  $\gamma\in (0,1)$,
we deduce that $u\geq g$ on $[0,1)$. If $f$ is  any pluriharmonic
majorant for $g$ on $[0,1)$, then, as previously shown, we have
$f\geq h_\gamma$ on $[0,\gamma)$ for each $\gamma\in (0,1)$. Hence,
$f\geq u$ on $[0,1)$. Therefore, we have shown that
$u:[B(\cH)^n]_1\to B(\cE)\otimes_{min} B(\cH)$ defined by
\begin{equation*}
\begin{split}
u(X)&=\lim_{\gamma\to 1}{\bf P}_{\frac{1}{t}X}[h_\gamma(t)]=
\lim_{\gamma\to 1}{\bf P}_{\frac{1}{t}X}\left[{\bf
P}_{\frac{t}{\gamma}S}[g(\gamma)]\right]\\
&=\lim_{\gamma\to 1}{\bf P}_{\frac{1}{\gamma}X}[g(\gamma)]
\end{split}
\end{equation*}
for $X\in [B(\cH)^n]_t$ and $t\in (0,\gamma)$,   is a pluriharmonic
function on $[B(\cH)^n]_1$. Moreover, its radial function $
u(t)=\lim_{r\to 1}{\bf P}_{\frac{t}{r}S}[g(r)]$,  $ t\in [0,1)$, is
the least pluriharmonic majorant of $g$. This completes the proof.
\end{proof}

Now we can prove the following  maximum principle for
sub-pluriharmonic curves.

\begin{theorem}
\label{max-prin} If $g$  is a sub-pluriharmonic curve on $[0,1)$ and
$g(0)\geq g(r)$ for any $ r\in [0,1)$, then $g$ is a constant.
\end{theorem}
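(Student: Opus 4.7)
The plan is to combine the mean-value inequalities of Corollary~\ref{ine-subha} with the maximum hypothesis to show that the positive element $g(0)-g(r)$ is annihilated by $\widetilde\tau$, and then to invoke a separating property of $\widetilde\tau$ on the positive cone of $\overline{\text{\boldmath{$\cP$}}_n^*+\text{\boldmath{$\cP$}}_n}^{\|\cdot\|}$ to conclude $g(r)=g(0)$ for every $r\in[0,1)$.

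First I would show that $\widetilde\tau[g(r)]=\widetilde\tau[g(0)]$ for every $r\in[0,1)$. Applying the positive linear map $\widetilde\tau=\tau\otimes\text{\rm id}$ to the hypothesis $g(r)\leq g(0)$ yields $\widetilde\tau[g(r)]\leq\widetilde\tau[g(0)]$, while Corollary~\ref{ine-subha}(iii) supplies the reverse inequality $\widetilde\tau[g(0)]\leq\widetilde\tau[g(r)]$. Setting $T(r):=g(0)-g(r)\in\overline{\text{\boldmath{$\cP$}}_n^*+\text{\boldmath{$\cP$}}_n}^{\|\cdot\|}$, the hypothesis gives $T(r)\geq 0$, while this identity gives $\widetilde\tau[T(r)]=0$.

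The conclusion then reduces to the following separating property: any self-adjoint $T\in\overline{\text{\boldmath{$\cP$}}_n^*+\text{\boldmath{$\cP$}}_n}^{\|\cdot\|}$ with $T\geq 0$ and $\widetilde\tau[T]=0$ must vanish. This I would verify by noting that $\langle T(\xi\otimes 1),\xi\otimes 1\rangle=\langle\widetilde\tau[T]\xi,\xi\rangle=0$, which combined with $T\geq 0$ forces $T(\xi\otimes 1)=0$ for every $\xi\in\cE$. Writing $T=F+F^*$ with $F\in B(\cE)\otimes\cA_n$ (possible by self-adjointness) and using $\widetilde\tau[T]=0$ to arrange that $F$ has vanishing constant term, a short computation gives $T(\xi\otimes 1)=\sum_{|\alpha|\geq 1}A_\alpha\xi\otimes e_\alpha$ (the antiholomorphic part $F^*$ annihilates the vacuum once the constant term of $F$ is zero); its vanishing for every $\xi$ forces $A_\alpha=0$ for all $|\alpha|\geq 1$, hence $F=0$ and $T=0$.

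The main obstacle is this separating property, which is the only non-formal ingredient of the proof; it reflects the cyclicity of the vacuum $1\in F^2(H_n)$ for the noncommutative disc algebra $\cA_n$ together with the absence of genuinely mixed monomials $S_\alpha S_\beta^*$ (with $|\alpha|,|\beta|\geq 1$) in the ambient space $\overline{\text{\boldmath{$\cP$}}_n^*+\text{\boldmath{$\cP$}}_n}^{\|\cdot\|}$. Everything else is a routine manipulation of the inequalities supplied by Corollary~\ref{ine-subha}.
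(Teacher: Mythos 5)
Your proposal is correct, and it reaches the same pivotal reduction as the paper --- namely that a positive element $T$ of $\overline{\text{\boldmath{$\cP$}}_n^*+\text{\boldmath{$\cP$}}_n}^{\|\cdot\|}$ with $\widetilde\tau[T]=0$ must vanish --- but you prove that reduction by a different and more self-contained route. The paper applies the lemma twice (first to $\widetilde\tau[g(0)]-g(0)$ to get $g(0)=\widetilde\tau[g(0)]$, then to $\widetilde\tau[g(r)]-g(r)$), and in each instance it deduces $T=0$ by passing to the associated free pluriharmonic function $h(X)={\bf P}_X[T]$, which is positive with $h(0)=0$, and invoking the maximum principle for free pluriharmonic functions (Theorem 2.9 of \cite{Po-pluriharmonic}). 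You instead organize the bookkeeping so that the lemma is applied once, directly to $T(r)=g(0)-g(r)$ (using Corollary \ref{ine-subha}(iii) together with positivity of $\widetilde\tau$ to get $\widetilde\tau[T(r)]=0$), and you prove the lemma by hand: $\langle T(\xi\otimes 1),\xi\otimes 1\rangle=\langle\widetilde\tau[T]\xi,\xi\rangle=0$ forces $T(\xi\otimes 1)=0$, and the vanishing of the vacuum vectors kills all Fourier coefficients. This buys independence from the cited maximum principle and makes transparent exactly where the structure of $\overline{\text{\boldmath{$\cP$}}_n^*+\text{\boldmath{$\cP$}}_n}^{\|\cdot\|}$ (no mixed monomials $S_\alpha S_\beta^*$) enters; the paper's version buys brevity by outsourcing to \cite{Po-pluriharmonic}.

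One caveat on your execution of the lemma: the decomposition $T=F+F^*$ with $F$ in the \emph{norm-closed} algebra $B(\cE)\otimes\cA_n$ is not available in general (already for $n=1$, $\cE=\CC$, a real continuous symbol on $\TT$ need not be $f+\bar f$ with $f$ in the disc algebra, since harmonic conjugation is unbounded there). This is a cosmetic rather than fatal flaw: you do not need such an $F$. Work instead with the Fourier coefficients $B_{(\alpha)}$ of the self-adjoint pluriharmonic function associated with $T$ (Theorem 4.1 of \cite{Po-pluriharmonic}), or with norm-approximants $p_k^*+q_k$, $p_k,q_k\in\text{\boldmath{$\cP$}}_n$; since $S_\alpha^*1=0$ for $|\alpha|\geq 1$, one still gets $T(\xi\otimes 1)=B_{(0)}\xi\otimes 1+\sum_{|\alpha|\geq 1}B_{(\alpha)}\xi\otimes e_\alpha$, and its vanishing for all $\xi$, together with self-adjointness and the uniqueness of Fourier coefficients, gives $T=0$. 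With that repair the argument is complete.
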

\begin{proof}
Since $\widetilde\tau$ is a positive map, we have $\widetilde \tau
[g(r)]\leq \widetilde \tau [g(0)]$ for  $ r\in [0,1)$. Hence and
using
 Corollary \ref{ine-subha}, we deduce that
 \begin{equation}
 \label{utata}
g(0)\leq \widetilde \tau [g(r)]\leq \widetilde \tau [g(0)]\quad
\text{ for } \ r\in (0,1).
 \end{equation}
On the other hand, since $g(0)$ is in
$\overline{\text{\boldmath{$\cP$}}_n^*+\text{\boldmath{$\cP$}}_n}^{\|\cdot\|}$,
so is $\varphi:=\widetilde \tau [g(0)]-g(0)$. Moreover, $\varphi$ is
positive with $\widetilde\tau (\varphi)=0$. Using Theorem 4.1 from
\cite{Po-pluriharmonic}, the pluriharmonic function  $h$ associated
with $\varphi$, i.e., $h(X)={\bf P}_X[\varphi],\quad X\in
[B(\cH)^n]_1, $ is positive and $h(0)=0$. Due to the maximum
principle for free pluriharmonic functions (see Theorem 2.9 from
\cite{Po-pluriharmonic}), we deduce that $h=0$. Since
$\varphi=\lim_{r\to 1} h(rS_1,\ldots, rS_n)$, we also have
$\varphi=0$, whence $g(0)=\widetilde \tau [g(0)]$. Due to  relation
\eqref{utata}, we have
\begin{equation}
\label{tau} \widetilde \tau [g(r)]=g(0)\quad \text{ for } \ r\in
[0,1).
\end{equation}
Hence and using the fact that $g(0)\geq g(r)$, $r\in [0,1)$, we
deduce that $\widetilde \tau [g(r)]-g(r)\geq 0$ for $r\in (0,1)$. A
similar argument as before implies $g(r)=\widetilde \tau [g(r)]$ for
$r\in (0,1)$.  Now taking into account \eqref{tau}, we get
$g(r)=g(0)$ for $r\in [0,1)$. Therefore, $\widetilde \tau
[g(r)]=\widetilde \tau [g(0)]=g(0)$ for $ r\in [0,1). $ This
completes the proof.
\end{proof}

A few remarks are necessary. We say that  a map
$\varphi:[0,\gamma)\to B(\cE)\otimes_{min} C^*(S_1,\ldots, S_n)$  is
a {\it $C^*$-harmonic   curve}  if it satisfies the Poisson mean
value property, i.e.,
\begin{equation*}
  \varphi(r)= {\bf P}_{\frac{r}{t} S}[\varphi(t)]\quad \text{
for } \ 0\leq r<t<\gamma.
\end{equation*}
 According to
\cite{Po-pluriharmonic}, there exists a one-to-one correspondence
$u\mapsto \varphi$ between the set of all  $C^*$-harmonic functions
on the noncommutative ball $[B(\cH)^n]_\gamma$ and the set of all
 $C^*$-harmonic  curves $\varphi:[0,\gamma)\to B(\cE)\otimes_{min} C^*(S_1,\ldots, S_n)$.
  We say that a map
$$\psi:[0,1)\to  B(\cE)\otimes_{min} C^*(S_1,\ldots, S_n)\quad
\text{ with }\ \psi(r)=\psi(r)^*, r\in [0,1), $$
 is a  {\it $C^*$-subharmonic   curve},
   provided that  for each $\gamma\in (0,1)$
 and each $C^*$-harmonic curve  $\varphi $ on the closed interval  on $[0,\gamma]$
 such that   $\varphi(r)=\varphi(r)^*$ for
$r\in [0,\gamma]$,
  if
 $\psi(\gamma)\leq \varphi(\gamma)$,   then
 $
\psi(r)\leq \varphi(r)\ \text{ for any } \ r\in [0,\gamma].
 $
 We remark that Theorem \ref{chara-ine} and Corollary
 \ref{ine-subha} have analogues for $C^*$-subharmonic curves. Since the proofs are
 similar, we shall omit them.
Notice also that any sub-pluriharmonic curve is $C^*$-subharmonic.

Finally, we mention that all the results of this section can be
written  for sub-pluriharmonic curves in the tensor algebra
$B(\cE)\otimes_{min} C^*(R_1,\ldots, R_n)$, where $R_1,\ldots, R_n$
are the right creation operators on the full Fock space.

\bigskip

\section{ Free pluriharmonic majorants and a
characterization of $H^2_{\bf ball} $ }

In this section we show that, for any
 free holomorphic function   $\Theta$ on $[B(\cH)^n]_1$ with coefficients in $B(\cE, \cY)$, the mapping
 $$
 \varphi:[0,1)\to B(\cE)\otimes_{min} C^*(R_1,\ldots, R_n),\quad
 \varphi(r)=\Theta(rR_1,\ldots, rR_n)^*\Theta(rR_1,\ldots, rR_n),
 $$
is a sub-pluriharmonic curve.  We prove that $\Theta$ is in the
Hardy space $H^2_{\bf ball}$  if and only if $\varphi$ has a
pluriharmonic majorant. In this case, we find Herglotz-Riesz and
Poisson  type representations for  the least pluriharmonic majorant
of $\varphi$.

First, we introduce some notation. Recall that $H_n$ is an
$n$-dimensional complex Hilbert space with orthonormal
      basis
      $e_1$, $e_2$, $\dots,e_n$, and
 the full Fock space  of $H_n$  is defined by
      $F^2(H_n):=\CC 1\oplus \bigoplus_{k\geq 1} H_n^{\otimes k}.$
Let $\FF_n^+$ be the unital free semigroup on $n$ generators
      $g_1,\dots,g_n$, and the identity $g_0$.
       We denote $e_\alpha:=
e_{i_1}\otimes\cdots \otimes  e_{i_k}$  if $\alpha=g_{i_1}\cdots
g_{i_k}$, where $i_1,\ldots, i_k\in \{1,\ldots,n\}$, and
$e_{g_0}:=1$. Note that $\{e_\alpha\}_{\alpha\in \FF_n^+}$ is an
orthonormal basis for $F^2(H_n)$.  We denote by $\tilde\alpha$ the
reverse of $\alpha\in \FF_n^+$, i.e.,
  $\tilde \alpha= g_{i_k}\cdots g_{i_k}$ if
   $\alpha=g_{i_1}\cdots g_{i_k}\in\FF_n^+$.

Let $\Theta:[B(\cH)^n]_1\to B(\cE,\cY) \otimes_{min} B(\cH)$ be a
free holomorphic function on $[B(\cH)^n]_1$ with coefficients in
$B(\cE,\cY)$ (in this case we denote $\Theta\in H_{\bf ball} (B(\cE,
\cY))$). Assume that $\Theta$ has the  representation
\begin{equation} \label{Th}
\Theta(X_1,\ldots, X_n):= \sum_{k=0}^\infty\sum_{|\alpha|=k}
A_{(\alpha)}\otimes X_\alpha.
\end{equation}
  We say that $\Theta$ is in the
noncommutative Hardy space $ H^2_{\bf ball} $ if there is a constant
$c>0$ such that $\sum_{\alpha\in \FF_n^+} A_{(\alpha)}^*
A_{(\alpha)}\leq cI_\cE$. When we want to emphasize  that the
coefficients of $\Theta$ are in $B(\cE, \cY)$, we denote $\Theta\in
H^2_{\bf ball}(B(\cE,\cY))$.  If we set $
\|\Theta\|_2:=\left\|\sum_{\alpha \in \FF_n^+} A_{(\alpha)}^*
A_{(\alpha)}\right\|^{1/2}<\infty, $ then $(H^2_{\bf ball},
\|\cdot\|_2)$ is a Banach space. We associate with each $\Theta \in
H^2_{\bf ball} $ the operator $\Gamma:\cE\to \cY\otimes F^2(H_n)$
defined by
\begin{equation}
\label{Ga}
 \Gamma x:=\sum_{\alpha\in \FF_n^+} A_{(\alpha)}x\otimes
e_{\tilde\alpha},\quad x\in \cE.
\end{equation}
We call $\Gamma$ the {\it symbol}  of $\Theta$.
 We will see later that $\Gamma x=\lim_{r\to 1}
\Theta(rR_1,\ldots, rR_n)(x\otimes 1)$, $x\in \cE$. Conversely, if
$\Gamma$ is an operator given by \eqref{Ga}, then relation
\eqref{Th} defines  a free holomorphic function  $\Theta$ in
$H^2_{\bf ball} $. Moreover, one can show  that $\Theta\in  H^2_{\bf
ball} $ and its symbol $\Gamma$ uniquely determine each other.

\begin{lemma}
\label{state} Let $\Theta$ be a free holomorphic function in
$H^2_{\bf ball}(B(\cE,\cY)) $  and let $\Gamma$ be its  symbol. Then
$\Theta$ has the state space realization
$$
\Theta(X_1,\ldots, X_n)= \left[E_{\cY}^*(I_\cY\otimes
P_{\CC})\otimes I_\cH\right]
\left[I_\cY\otimes\left(I_{F^2(H_n)\otimes \cH}-\sum_{i=1}^n
S_i^*\otimes X_i\right)^{-1}\right](\Gamma\otimes I_\cH)
$$
for $(X_1,\ldots, X_n)\in [B(\cH)^n]_1$, where $E_\cY:\cY\to
\cY\otimes F^2(H_n)$ is defined by setting  $E_\cY y=y\otimes 1$,
and $P_\CC$ denotes the orthogonal projection of $F^2(H_n)$ on
$\CC$.
\end{lemma}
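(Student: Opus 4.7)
My plan is to verify the identity by expanding the right-hand side as a formal series and matching it term-by-term against the series defining $\Theta$. Convergence issues are mild because $\|\sum_{i=1}^n S_i^*\otimes X_i\|\leq \|X\|<1$ on $[B(\cH)^n]_1$, so the Neumann series converges in operator norm.

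First I would compute the geometric series expansion
\begin{equation*}
\left(I_{F^2(H_n)\otimes\cH}-\sum_{i=1}^n S_i^*\otimes X_i\right)^{-1}=\sum_{k=0}^\infty\left(\sum_{i=1}^n S_i^*\otimes X_i\right)^k.
\end{equation*}
The key combinatorial identity here is that, for $\alpha=g_{i_1}\cdots g_{i_k}$, one has $S_\alpha^*=S_{i_k}^*\cdots S_{i_1}^*$, so
\begin{equation*}
\left(\sum_{i=1}^n S_i^*\otimes X_i\right)^k=\sum_{|\alpha|=k} S_{\tilde\alpha}^*\otimes X_\alpha,
\end{equation*}
because $S_{i_1}^*\cdots S_{i_k}^*=S_{\tilde\alpha}^*$ with $\tilde\alpha=g_{i_k}\cdots g_{i_1}$. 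Thus the inverse equals $\sum_{\alpha\in\FF_n^+} S_{\tilde\alpha}^*\otimes X_\alpha$.

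Next, I would apply the whole right-hand side to an elementary tensor $x\otimes h\in\cE\otimes\cH$. Since $(\Gamma\otimes I_\cH)(x\otimes h)=\sum_\beta A_{(\beta)}x\otimes e_{\tilde\beta}\otimes h$, the middle factor produces
\begin{equation*}
\sum_{\alpha,\beta} A_{(\beta)}x\otimes S_{\tilde\alpha}^*e_{\tilde\beta}\otimes X_\alpha h.
\end{equation*}
Using the standard action of left creation operators on the Fock basis, $S_{\tilde\alpha}^* e_{\tilde\beta}=e_\omega$ if $\tilde\beta=\tilde\alpha\omega$ (equivalently $\beta=\tilde\omega\alpha$) and vanishes otherwise.

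Now I would apply $E_\cY^*(I_\cY\otimes P_\CC)\otimes I_\cH$. Since $P_\CC e_\omega=\delta_{\omega,g_0}\cdot 1$, only the terms with $\tilde\beta=\tilde\alpha$, i.e.\ $\alpha=\beta$, survive, collapsing the double sum to $\sum_\alpha A_{(\alpha)}x\otimes 1\otimes X_\alpha h$. Then $E_\cY^*(y\otimes 1)=y$ strips the vacuum, yielding $\sum_\alpha A_{(\alpha)}x\otimes X_\alpha h=\Theta(X)(x\otimes h)$, which is precisely the desired identity on the dense subspace of elementary tensors; by boundedness it extends to all of $\cE\otimes\cH$.

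The only real obstacle is bookkeeping: one must be careful with the reversal $\alpha\mapsto\tilde\alpha$ that appears both in the definition \eqref{Ga} of $\Gamma$ and in the geometric-series expansion of the resolvent, since the left creation operators act on Fock words from the left while the noncommutative product $X_\alpha$ reads indices from left to right. Once the matching $\alpha=\beta$ is enforced by the vacuum projection $P_\CC$, the two reversals cancel and the formula emerges cleanly.
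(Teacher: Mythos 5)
Your proof is correct and follows essentially the same route as the paper: both hinge on expanding the resolvent as $\sum_{\alpha\in\FF_n^+} S_{\tilde\alpha}^*\otimes X_\alpha$ and using the vacuum projection to extract the coefficient $A_{(\alpha)}$ from $\Gamma$ (the paper phrases this as the operator identity $A_{(\alpha)}=E_\cY^*(I_\cY\otimes P_\CC)(I_\cY\otimes S_{\tilde\alpha}^*)\Gamma$, while you verify the same matching on elementary tensors). The bookkeeping of the two reversals is handled correctly in both.
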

\begin{proof}

Assume that  $\Theta$   has the representation $ \Theta(X_1,\ldots,
X_n):= \sum_{k=0}^\infty\sum_{|\alpha|=k} A_{(\alpha)}\otimes
X_\alpha$ for some coefficients $A_{(\alpha)}\in B(\cE, \cY)$, and
let $\Gamma:\cE\to \cY\otimes F^2(H_n)$  be its symbol  defined by
relation \eqref{Ga}. Notice that $$
A_{(\alpha)}=E_\cY^*(I_\cY\otimes P_\CC)(I_\cY\otimes S_{\tilde
\alpha}^*)\Gamma,\qquad \alpha\in \FF_n^+. $$ Therefore, we have
\begin{equation*}
\begin{split}
\Theta(X_1,\ldots, X_n)&= \sum_{k=0}^\infty\sum_{|\alpha|=k}
\left[E_\cY^*(I_\cY\otimes P_\CC)(I_\cY\otimes S_{\tilde
\alpha}^*)\Gamma\right]\otimes X_\alpha\\
&= \left[E_\cY^*(I_\cY\otimes P_\CC)\otimes I_\cH\right]
\left[\sum_{k=0}^\infty\sum_{|\alpha|=k}(I_\cY\otimes S_{\tilde
\alpha}^*)\otimes X_\alpha\right](\Gamma\otimes I_\cH)\\
&= \left[E_\cY^*(I_\cY\otimes P_\CC)\otimes I_\cH\right]
\left(I_{\cY\otimes F^2(H_n)\otimes \cH}- \sum_{i=1}^n I_\cY\otimes
S_i^*\otimes X_i\right)^{-1}(\Gamma\otimes I_\cH)
\end{split}
\end{equation*}
for  any $(X_1,\ldots, X_n)\in [B(\cH)^n]_1$. This completes the
proof.
\end{proof}

Now we introduce a large class of sub-pluriharmonic functions.

\begin{theorem}
\label{Th-sub}
 Let $\Theta $ be a free holomorphic function on $[B(\cH)^n]_1$ with
coefficients in $B(\cE,\cY)$. Then the map
$$
\varphi(r):=\Theta(rR_1,\ldots, rR_n)^*\Theta(rR_1,\ldots,
rR_n),\quad r\in [0,1),
$$
is a sub-pluriharmonic curve  in the tensor  algebra
$B(\cE)\otimes_{min}C^*(R_1,\ldots, R_n)$.
\end{theorem}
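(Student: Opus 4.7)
The plan is to verify the Poisson-type inequality that characterizes sub-pluriharmonic curves: by the right-creation analogue of Theorem \ref{chara-ine} (promised in the closing remarks of Section 1), it suffices to show that for $0\le r<\gamma<1$,
\begin{equation*}
\varphi(r)\le \mathbf{P}_{\frac{r}{\gamma}R}[\varphi(\gamma)].
\end{equation*}
First I would confirm that $\varphi(r)$ really lives in the appropriate norm-closed tensor algebra $\overline{\text{\boldmath{$\cP$}}_n^*+\text{\boldmath{$\cP$}}_n}^{\|\cdot\|}$ built from $R_1,\dots,R_n$: since $r<1$ and $\Theta$ is free holomorphic, the series $\Theta(rR)=\sum_\alpha A_{(\alpha)}\otimes r^{|\alpha|}R_\alpha$ converges in norm (combining $\|rR_i\|=r<1$ with the free-holomorphic growth of the $A_{(\alpha)}$'s), whence $\Theta(rR)^*\Theta(rR)$ lies in this closure.

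The main step is to establish the stronger inequality
\begin{equation*}
\Theta(\gamma X)^*\Theta(\gamma X)\le \mathbf{P}_X\bigl[\Theta(\gamma R)^*\Theta(\gamma R)\bigr],\qquad X\in[B(\cH)^n]_1,
\end{equation*}
which specializes to the required estimate by taking $X=\frac{r}{\gamma}R$, so that $\gamma X=rR$. To prove it, I would fix $\gamma\in(0,1)$ and introduce the auxiliary function $\Theta_\gamma(X):=\Theta(\gamma X)$; this is free holomorphic on the larger ball $[B(\cH)^n]_{1/\gamma}$ and therefore bounded on the closed ball $[B(\cH)^n]_1^-$. The Poisson reproducing property for such bounded free holomorphic functions (the holomorphic analogue of the identity $u(X)=\mathbf{P}_{X/r}[u(rR)]$ used for pluriharmonic curves in Section 1) then gives
\begin{equation*}
\Theta_\gamma(X)=\mathbf{P}_X[\Theta_\gamma(R)]=\mathbf{P}_X[\Theta(\gamma R)]\qquad\text{for } X\in[B(\cH)^n]_1.
\end{equation*}

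To convert this reproducing identity into the needed inequality, I would apply the Kadison--Schwarz inequality: since $X$ is in the open ball, the Poisson kernel $K_X$ is an isometry and $\mathbf{P}_X$ is a unital completely positive map, so
\begin{equation*}
\mathbf{P}_X\bigl[\Theta(\gamma R)^*\Theta(\gamma R)\bigr]\ge \mathbf{P}_X[\Theta(\gamma R)]^*\mathbf{P}_X[\Theta(\gamma R)]=\Theta(\gamma X)^*\Theta(\gamma X).
\end{equation*}
Specializing to $X=\tfrac{r}{\gamma}R$ finishes the argument.

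The hard part I expect is the clean justification of the reproducing formula $\Theta_\gamma(X)=\mathbf{P}_X[\Theta(\gamma R)]$ in the operator-coefficient setting: one must check that $\Theta(\gamma R)$ really sits in the correct norm-closure on which $\mathbf{P}_X$ acts, and that the Poisson transform genuinely recovers $\Theta_\gamma$ on the open ball. This follows from the boundedness of $\Theta_\gamma$ on $[B(\cH)^n]_1^-$ together with the Poisson representation results of \cite{Po-pluriharmonic}, but handling the operator-valued coefficients $A_{(\alpha)}\in B(\cE,\cY)$ carefully is the only step requiring real attention beyond the standard Schwarz-type manipulation.
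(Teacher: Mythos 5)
Your argument is correct, and it takes a genuinely different (and much shorter) route than the paper's. You reduce to the Poisson inequality of Theorem \ref{chara-ine} in its right-creation form (which the closing remark of Section 1 explicitly licenses), and you obtain that inequality from the reproducing formula $\Theta(\gamma X)={\bf P}_X[\Theta(\gamma R)]$ together with the Schwarz inequality for the compression $u\mapsto (I\otimes K_X)^*(u\otimes I)(I\otimes K_X)$ by the isometric Poisson kernel; the operator-coefficient case you flag as the delicate point is in fact harmless, since for rectangular $a$ one still has ${\bf P}_X[a]^*{\bf P}_X[a]=(I\otimes K_X)^*(a^*\otimes I)(I\otimes K_XK_X^*)(a\otimes I)(I\otimes K_X)\le {\bf P}_X[a^*a]$ because $K_XK_X^*\le I$. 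Two small points worth making explicit: first, $\varphi(\gamma)$ really does lie in the norm closure of $\text{\boldmath{$\cP$}}_n^*+\text{\boldmath{$\cP$}}_n$ (right-handed version) because each product $R_\alpha^*R_\beta$ is $0$, some $R_\omega$, or some $R_\omega^*$, and the double series converges in norm for $\gamma<1$; second, the reproducing formula at the closed-ball radius uses that $\Theta_\gamma$ is free holomorphic on a strictly larger ball, hence free pluriharmonic with continuous extension to $[B(\cH)^n]_1^-$, so the Dirichlet-type correspondence quoted in Section 1 applies at the endpoint. The paper instead argues constructively: it introduces the symbol $\Gamma$, builds the Herglotz-type function $W$ of \eqref{def-W}, proves the exact defect identity \eqref{TT=VV} giving $\Theta(rR)^*\Theta(rR)\le\text{\rm Re}\,W(rR)$, and then verifies sub-pluriharmonicity directly from the definition via the dilated functions $\Theta_s$ and the boundary equality \eqref{W-s}. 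What that longer route buys is precisely the explicit majorant $\text{\rm Re}\,W$ and the identity \eqref{TT=VV}, which are reused essentially in Theorem \ref{Th-sub2} (where $\text{\rm Re}\,W$ is shown to be the \emph{least} pluriharmonic majorant) and in Theorem \ref{parametriz}; your Schwarz-inequality argument establishes sub-pluriharmonicity more economically but produces no candidate majorant and so cannot replace the computation in the later results.
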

\begin{proof}

 First,  assume that   $ \Theta$  is  a free holomorphic function in
$H^2_{\bf ball} $ and let $\Gamma$ be  its symbol (see  \eqref{Ga}).
Define the free holomorphic function on $[B(\cH)^n]_1$   by setting
\begin{equation}
\begin{split}
\label{def-W} W(X_1,\ldots, X_n):= (\Gamma^*\otimes I_\cH)&
\left(I_{\cY\otimes F^2(H_n)\otimes \cH}+ \sum_{i=1}^n I_\cY\otimes
S_i^*\otimes X_i\right) \\
&\left(I_{\cY\otimes F^2(H_n)\otimes \cH}- \sum_{i=1}^n I_\cY\otimes
S_i^*\otimes X_i\right)^{-1}(\Gamma\otimes I_\cH).
\end{split}
\end{equation}

Consider the noncommutative Cauchy kernel
$$
\Phi(X_1,\ldots, X_n):= \left(I_{\cY\otimes F^2(H_n)\otimes \cH}-
\sum_{i=1}^n I_\cY\otimes S_i^*\otimes X_i\right)^{-1},\qquad
(X_1,\ldots, X_n)\in [B(\cH)^n]_1.
$$
Notice that $\Phi$ is a free holomorphic function on $[B(\cH)^n]_1$
and
\begin{equation}
\begin{split}
\label{phi-rel} \Phi(X_1,\ldots, X_n)&=I_{\cY\otimes F^2(H_n)\otimes
\cH}+\Phi(X_1,\ldots, X_n)\left(\sum_{i=1}^n I_\cY\otimes
S_i^*\otimes X_i\right)\\
& = I_{\cY\otimes F^2(H_n)\otimes \cH}+\left(\sum_{i=1}^n
I_\cY\otimes S_i^*\otimes X_i\right)\Phi(X_1,\ldots, X_n).
\end{split}
\end{equation}
A closer look at the definition of the free holomorphic function $W$
(see \eqref{def-W}) reveals that
\begin{equation}\label{W-def2}
\begin{split}
W(X_1,\ldots, X_n)= (\Gamma^*\Gamma\otimes I_\cH) +
2(\Gamma^*\otimes I_\cH) \left(  \sum_{i=1}^n I_\cY\otimes
S_i^*\otimes X_i\right) \Phi(X_1,\ldots, X_n)(\Gamma\otimes I_\cH).
\end{split}
\end{equation}
Now, we use  Lemma \ref{state} when $\cH:= F^2(H_n)$ and
$X_i:=rR_i$, $i=1,\ldots,n$ , for $r\in[0,1)$. Note that due to the
fact that $P_\CC=I_{F^2(H_n)}- S_1S_1^*-\cdots- S_nS_n^*$ and using
relation \eqref{phi-rel}, we deduce that
\begin{equation*}
\begin{split}
&\Theta(rR_1,\ldots, rR_n)^*\Theta(rR_1,\ldots, rR_n) \\
&\quad= (\Gamma^*\otimes I_{F^2(H_n)}) \left( I_{\cY\otimes
F^2(H_n)\otimes
F^2(H_n)}-\sum_{i=1}^n I_\cY\otimes S_i\otimes rR_i^*\right)^{-1}\\
 &\qquad\qquad\qquad \times(I_\cY\otimes P_\CC\otimes I_{F^2(H_n)})  \left(
I_{\cY\otimes F^2(H_n)\otimes F^2(H_n)}-\sum_{i=1}^n I_\cY\otimes
S_i^*\otimes rR_i\right)^{-1}\\
&\quad= (\Gamma^*\otimes I_{F^2(H_n)}) \Phi(rR_1,\ldots, rR_n) ^*
\left[I_\cY\otimes \left(I_{F^2(H_n)}-\sum_{i=1}^n
S_iS_i^*\right)\otimes I_{F^2(H_n)}\right]\\
&\qquad\qquad\qquad \times \Phi(rR_1,\ldots, rR_n)
(\Gamma\otimes I_{F^2(H_n)})\\
&\quad= (\Gamma^*\otimes I_{F^2(H_n)}) \Phi(rR_1,\ldots, rR_n) ^*
   \Phi(rR_1,\ldots, rR_n)(\Gamma\otimes I_{F^2(H_n)})\\
   &\qquad -
   (\Gamma^*\otimes I_{F^2(H_n)}) \Phi(rR_1,\ldots, rR_n) ^*
\left[I_\cY\otimes \left(\sum_{i=1}^n S_iS_i^*\right)\otimes
I_{F^2(H_n)}\right] \Phi(rR_1,\ldots, rR_n)
(\Gamma\otimes I_{F^2(H_n)})\\
&\quad=(\Gamma^*\otimes I_{F^2(H_n)})
 \left[I_{\cY\otimes F^2(H_n)\otimes
F^2(H_n)}+\left(\sum_{i=1}^n I_\cY\otimes S_i\otimes
rR_i^*\right)\Phi(rR_1,\ldots, rR_n)^*\right]\\
&\qquad\qquad\qquad \times \left[I_{\cY\otimes F^2(H_n)\otimes
F^2(H_n)}+\Phi(rR_1,\ldots, rR_n) \left(\sum_{i=1}^n I_\cY\otimes
S_i\otimes rR_i^*\right)\right](\Gamma\otimes I_{F^2(H_n)})\\
&\qquad - (\Gamma^*\otimes I_{F^2(H_n)}) \Phi(rR_1,\ldots, rR_n) ^*
\left[I_\cY\otimes \left(\sum_{i=1}^n S_iS_i^*\right)\otimes
I_{F^2(H_n)}\right] \Phi(rR_1,\ldots, rR_n)
(\Gamma\otimes I_{F^2(H_n)})\\
&\quad= (\Gamma^*\Gamma \otimes I_{F^2(H_n)})+(\Gamma^*\otimes
I_{F^2(H_n)}) \left(\sum_{i=1}^n I_\cY\otimes S_i\otimes
rR_i^*\right)\Phi(rR_1,\ldots, rR_n) ^* (\Gamma\otimes
I_{F^2(H_n)})\\
&\qquad + (\Gamma^*\otimes I_{F^2(H_n)}) \Phi(rR_1,\ldots, rR_n)
\left(\sum_{i=1}^n I_\cY\otimes S_i^*\otimes rR_i\right)
(\Gamma\otimes
I_{F^2(H_n)})\\
&\qquad + (\Gamma^*\otimes I_{F^2(H_n)}) \Phi(rR_1,\ldots,
rR_n)^* \left(\sum_{j=1}^n I_\cY\otimes S_j\otimes rR_j^*\right)\\
&\qquad \qquad \qquad \times \left(\sum_{i=1}^n I_\cY\otimes
S_i^*\otimes rR_i\right) \Phi(rR_1,\ldots,
rR_n)(\Gamma\otimes I_{F^2(H_n)})\\
&\qquad - (\Gamma^*\otimes I_{F^2(H_n)}) \Phi(rR_1,\ldots, rR_n)^*
\left(\sum_{j=1}^n I_\cY\otimes S_jS_j^*\otimes I_{F^2(H_n)}\right)
  \Phi(rR_1,\ldots,
rR_n)(\Gamma\otimes I_{F^2(H_n)}).
\end{split}
\end{equation*}

Hence and using relation \eqref{W-def2},  we deduce that
\begin{equation}\label{TT=VV}
\begin{split}
&\Theta(rR_1,\ldots, rR_n)^*\Theta(rR_1,\ldots, rR_n)\\
&\quad = \frac{1}{2}\left[ W(rR_1,\ldots, rR_n)+W(rR_1,\ldots,
rR_n)^*\right] -(1-r^2)(\Gamma^*\otimes I_{F^2(H_n)})
\Phi(rR_1,\ldots, rR_n)^* \\
&\qquad \qquad\qquad \qquad \times\left(\sum_{j=1}^n I_\cY\otimes
S_jS_j^*\otimes I_{F^2(H_n)}\right)
  \Phi(rR_1,\ldots,
rR_n)(\Gamma\otimes I_{F^2(H_n)})
\end{split}
\end{equation}
for any $r\in [0,1)$. Consequently, we have
\begin{equation}
\label{ine-imp}
 \Theta(rR_1,\ldots, rR_n)^*\Theta(rR_1,\ldots, rR_n)
\leq \frac{1}{2}\left[ W(rR_1,\ldots, rR_n)+W(rR_1,\ldots,
rR_n)^*\right]
\end{equation}
for any $r\in[0,1)$, which proves that $W$ is a free holomorphic
function with positive real part.

For each $s\in[0,1)$, define the operator $\Lambda_s:\cY\otimes
F^2(H_n)\to \cY\otimes F^2(H_n)$ by setting
$$\Lambda_s\left(\sum_{\alpha\in \FF_n^+}h_\alpha\otimes e_\alpha\right)
:=\sum_{\alpha\in \FF_n^+} h_\alpha\otimes s^{|\alpha|} e_\alpha,
\qquad \sum_{\alpha\in \FF_n^+}\| h_\alpha\|^2<\infty.
$$
It is easy to see that $\Lambda_s$ is a positive operator such that
$\|\Lambda_s\|\leq 1$  and  $\lim_{s\to 1}\Lambda_s= I$ in the
strong operator topology. Note also that
\begin{equation}
\label{Lam-s} \Lambda_s(I_\cY\otimes S_i)=s(I_\cY\otimes S_i)
\Lambda_s,\qquad i=1,\ldots,n.
\end{equation}
Fix $s\in [0,1)$ and set $\Theta_s(X_1,\ldots,
X_n):=\Theta(sX_1,\ldots, sX_n)$ for $X_1,\ldots, X_n)\in
[B(\cH)^n]_1$. It clear that $\Theta_s$ is free holomorphic on an
open neighborhood of $[B(\cH)^n]_1$ and, therefore, continuous on
the closed ball $[B(\cH)^n]_1^-$. Let $\Gamma_s$ be  the symbol
operator associated with $\Theta_s$ (see relation \eqref{Ga}) and
let $W_s$ be the operator associated to $\Gamma_s$ (by relation
\eqref{W-def2}). Then we have $\Gamma_s=\Lambda_s \Gamma$, where
$\Gamma$ is the  symbol associated with $\Theta$. Due to relations
\eqref{W-def2} and \eqref{Lam-s}, we deduce that
\begin{equation*}
\begin{split}
W_s(X_1,\ldots, X_n)&=
 (\Gamma^*\Lambda_s^2\Gamma\otimes I_\cH) +
2(\Gamma^* \Lambda_s\otimes I_\cH) \left(  \sum_{i=1}^n I_\cY\otimes
S_i^*\otimes X_i\right) \Phi(X_1,\ldots, X_n)(\Lambda_s\Gamma\otimes
I_\cH)\\
&= (\Gamma^*\Lambda_s^2\Gamma\otimes I_\cH) + 2(\Gamma^*
\Lambda_s^2\otimes I_\cH) \left(  \sum_{i=1}^n I_\cY\otimes
S_i^*\otimes sX_i\right) \Phi(sX_1,\ldots, sX_n)(\Gamma\otimes
I_\cH).
\end{split}
\end{equation*}
Consequently, $W_s$ is a free holomorphic function on an open set
containing the closed ball $[B(\cH)^n]_1^-$, and
\begin{equation} \label{Ws-lim}
\text{\rm SOT-} \lim_{s\to1} W_s= W.
\end{equation}
Now, from the first part of the proof (see \eqref{TT=VV}), we have
\begin{equation*}
\begin{split}
\text{\rm Re}\,&W_s(rR_1,\ldots, rR_n)-\Theta_s(rR_1,\ldots, rR_n)^* \Theta_s(rR_1,\ldots, rR_n)\\
&=(1-r^2)(\Gamma_s^*\otimes I_{F^2(H_n)}) \Phi(rR_1,\ldots, rR_n)^*
\left(\sum_{j=1}^n I_\cY\otimes r^2S_jS_j^*\otimes
I_{F^2(H_n)}\right)\\
&\qquad \qquad \times
  \Phi(rR_1,\ldots,
rR_n)(\Gamma_s\otimes I_{F^2(H_n)})\\
&=(1-r^2)(\Gamma^*\otimes I_{F^2(H_n)}) \Phi(srR_1,\ldots,
srR_n)^*(\Lambda_s\otimes I_{F^2(H_n)})\\
&\qquad \qquad \times \left(\sum_{j=1}^n I_\cY\otimes
r^2S_jS_j^*\otimes I_{F^2(H_n)}\right)(\Lambda_s\otimes
I_{F^2(H_n)}) \Phi(srR_1,\ldots, srR_n)(\Gamma\otimes I_{F^2(H_n)})
\end{split}
\end{equation*}
for any $r\in [0,1)$.  Hence we deduce that
\begin{equation}
\label{Re-ine-s} \text{\rm Re}\,W_s(rR_1,\ldots, rR_n)\geq
\Theta_s(rR_1,\ldots, rR_n)^* \Theta_s(rR_1,\ldots, rR_n)
\end{equation}
for any $r\in [0,1)$. Moreover,
 taking into account the continuity (in the norm operator topology) of the free holomorphic functions
 $\Theta_s$ and  $W_s$
 on the closed ball $[B(\cH)^n]_1^-$,   the continuity of  the map
 $$[0,1]\ni r\mapsto  \Phi(srR_1,\ldots, srR_n),$$
  we deduce that
 \begin{equation}
 \label{W-s}
 \begin{split}
\text{\rm Re}\,W_s(R_1,\ldots, R_n)=\Theta_s(R_1,\ldots, R_n)^*
\Theta_s(R_1,\ldots, R_n)
 \end{split}
 \end{equation}
for any $s\in [0,1)$.

Now, we prove that the map $\varphi$ is a sub-pluriharmonic curve.
Fix $s\in (0,1)$ and assume that $u$ is a free holomorphic function
on $[B(\cH)^n]_s$ and continuous on the closed ball $[B(\cH)^n]_s^-$
such that
\begin{equation}
\label{TTR}
 \Theta(sR_1,\ldots, sR_n)^*\Theta(sR_1,\ldots, sR_n)\leq
\text{\rm Re}\, u(sR_1,\ldots, sR_n).
\end{equation}
Let $t\in (s,1)$ and  set $\Theta':=\Theta_t$. Since $\Theta'$ is in
$H^2_{\bf ball} $,  let $W'$ be the free holomorphic function
associated with $\Theta'$ (according to relation \ref{def-W}). Now,
we can apply the results  above (see \eqref{W-s}) to $\Theta'$ and
$W'$ and obtain
\begin{equation}
\label{W'} \text{\rm Re}\,W'_\tau(R_1,\ldots,
R_n)=\Theta'_\tau(R_1,\ldots, R_n)^* \Theta'_\tau(R_1,\ldots, R_n)
\end{equation}
for any $\tau\in [0,1)$. Using \eqref{W'} and \eqref{TTR}, we have
\begin{equation*}
\begin{split}
\text{\rm Re}\,W'_{\frac{s}{t}}(R_1,\ldots,
R_n)&=\Theta'_{\frac{s}{t}}(R_1,\ldots, R_n)^*
\Theta'_{\frac{s}{t}}(R_1,\ldots, R_n)\\
&=\Theta(sR_1,\ldots, sR_n)^*\Theta(sR_1,\ldots, sR_n)\\
&\leq \text{\rm Re}\, u(sR_1,\ldots, sR_n).
\end{split}
\end{equation*}
  Employing  the noncommutative Poisson transform, we deduce that
\begin{equation*}
 \begin{split}
\text{\rm Re}\,W'_{\frac{s}{t}}(\gamma R_1,\ldots,\gamma R_n)&= {\bf
P}_{\gamma
R}\left[\text{\rm Re}\,W'_{\frac{s}{t}}( R_1,\ldots, R_n)\right]\\
&\leq  {\bf P}_{\gamma R}\left[ \text{\rm Re}\, u( sR_1,\ldots, sR_n)\right]\\
&\leq \text{\rm Re}\, u(\gamma sR_1,\ldots, \gamma sR_n)
\end{split}
 \end{equation*}
for any $\gamma\in [0,1)$.  Consequently,  applying  inequality
\eqref{Re-ine-s} to $\Theta'$ and $W'$, we have
$$
\Theta'_{\frac{s}{t}}(\gamma R_1,\ldots, \gamma R_n)^*
\Theta'_{\frac{s}{t}}(\gamma R_1,\ldots, \gamma R_n)\leq \text{\rm
Re}\,W'_{\frac{s}{t}}(\gamma R_1,\ldots, \gamma R_n)\leq \text{\rm
Re}\, u(\gamma sR_1,\ldots, \gamma sR_n)
$$
for any $\gamma\in [0,1)$.  Hence, we deduce that
$$
\Theta(s\gamma R_1,\ldots, s\gamma R_n)^* \Theta(s\gamma R_1,\ldots,
s\gamma R_n)\leq \text{\rm Re}\, u(s\gamma R_1,\ldots, s\gamma R_n)
$$
for any $\gamma\in [0,1)$. Therefore,
$$
\Theta(t R_1,\ldots, t R_n)^* \Theta(t R_1,\ldots, t R_n)\leq
\text{\rm Re}\, u(t R_1,\ldots, t R_n)
$$
for any $t\in [0,s]$. This proves that  $\varphi$ is a
sub-pluriharmonic curve. The proof is complete.
\end{proof}

 Let  $\cP^{(m)}$, $m=0,1,\ldots$,
be the set of all polynomials of degree $\leq m$   in $e_1,\ldots,
e_n$, i.e.,
$$
\cP^{(m)}:=\text{ \rm span} \{ e_\alpha: \ \alpha\in \FF_n^+,
|\alpha|\leq m\}\subset F^2(H_n),
$$
and define the nilpotent operators $R_i^{(m)}: \cP^{(m)}\to
\cP^{(m)}$ by
$$
R_i^{(m)}:=P_{\cP^{(m)}} R_i |_{\cP^{(m)}},\quad i=1,\ldots, n,
$$
where $R_1,\ldots, R_n$ are the right creation operators on the Fock
space $F^2(H_n)$ and $P_{\cP^{(m)}}$ is the orthogonal projection of
$F^2(H_n)$ onto $\cP^{(m)}$. Notice that $R_\alpha^{(m)}=0$ if
$|\alpha|\geq m+1$.

 We can provide now a characterization of the noncommutative Hardy
 space $H^2_{\bf ball} $ in terms of pluriharmonic majorants.

\begin{theorem}
\label{Th-sub2}
 Let $\Theta $ be a free holomorphic function on $[B(\cH)^n]_1$ with
coefficients in $B(\cE,\cY)$. Then  $ \Theta$ is in $H^2_{\bf ball}
$ if and only if  the map $\varphi$ defined by
$$
\varphi(r):=\Theta(rR_1,\ldots, rR_n)^*\Theta(rR_1,\ldots,
rR_n),\quad r\in [0,1),
$$
has a pluriharmonic majorant.   In this case, the least
pluriharmonic majorant $\psi$  for $\varphi$ is given by
\begin{equation}
\label{==}
 \psi(r):=\text{\rm Re}\, W(rR_1, \ldots rR_n),\quad r\in[0,1),
\end{equation}
  where $W$ is the free holomorphic function  having the
  Herglotz-Riesz
  type representation
  \begin{equation}\label{Herg}
  W(X_1,\ldots,
  X_n)=({\mu}_\theta\otimes \text{\rm id})\left[\left(I+\sum_{i=1}^n R_i^*\otimes
  X_i\right)\left(I-\sum_{i=1}^n R_i^*\otimes
  X_i\right)^{-1}\right]
  \end{equation}
for $(X_1,\ldots, X_n)\in [B(\cH)^n]_1$, where
$\mu_\theta:\cR_n^*+\cR_n\to B(\cE)$ is the completely positive
linear map uniquely determined by   the equation
\begin{equation}
\label{def-mu} \left<\mu_\theta(R_{\widetilde \alpha}^*)x,y\right>:=
\lim_{r\to 1}\left<\Theta(rR_1,\ldots, rR_n)^*(I_\cY\otimes
S_{\widetilde \alpha}^*)\Theta(rR_1,\ldots, rR_n)(x\otimes 1),
(y\otimes 1)\right>
\end{equation}
for $\alpha\in \FF_n^+$ and $x,y\in \cE$.
\end{theorem}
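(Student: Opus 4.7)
My plan is to decouple the two assertions of the theorem: the characterization of $H^2_{\bf ball}$ via pluriharmonic majorants, and the identification of the least majorant with $\text{\rm Re}\, W(rR_1,\ldots,rR_n)$ for $W$ given by the Herglotz--Riesz formula \eqref{Herg}.

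For the characterization, I would compute $\widetilde\tau[\varphi(r)]$ directly. Writing $\Theta(rR_1,\ldots,rR_n)=\sum_{\alpha\in\FF_n^+} r^{|\alpha|} A_{(\alpha)}\otimes R_\alpha$ and using $R_\alpha 1 = e_{\tilde\alpha}$ together with $\tau(R_\alpha^* R_\beta)=\delta_{\alpha\beta}$, one obtains
$$\widetilde\tau[\varphi(r)] = \sum_{\alpha\in\FF_n^+} r^{2|\alpha|} A_{(\alpha)}^* A_{(\alpha)}.$$
This monotone family is uniformly bounded in $r$ precisely when $\sum_\alpha A_{(\alpha)}^* A_{(\alpha)}\le cI_\cE$ for some $c>0$, i.e., when $\Theta\in H^2_{\bf ball}$; combined with Theorem \ref{har-major}, this gives the equivalence. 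The "if" direction also follows directly from inequality \eqref{ine-imp} in the proof of Theorem \ref{Th-sub}, which exhibits $\text{\rm Re}\, W(rR_1,\ldots,rR_n)$ as an explicit pluriharmonic majorant when $\Theta\in H^2_{\bf ball}$, where $W$ is the function of \eqref{def-W}.

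For the Herglotz--Riesz representation, I would next verify that the limit in \eqref{def-mu} exists. A direct computation of the action of $S_{\tilde\alpha}^*$ on the basis $\{e_\alpha\}$ gives
$$\langle\Theta(rR)^*(I\otimes S_{\tilde\alpha}^*)\Theta(rR)(x\otimes 1), y\otimes 1\rangle = r^{|\alpha|}\sum_{\gamma\in\FF_n^+} r^{2|\gamma|} \langle A_{(\gamma)}^* A_{(\gamma\alpha)} x, y\rangle,$$
and the sum converges as $r\to 1$ by the Cauchy--Schwarz inequality applied to the symbol $\Gamma$, yielding $\mu_\theta(R_{\tilde\alpha}^*) = \Gamma^*(I_\cY\otimes S_{\tilde\alpha}^*)\Gamma$. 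Complete positivity of $\mu_\theta$ on $\cR_n^*+\cR_n$ then follows by introducing the flip unitary $U$ on $F^2(H_n)$ defined by $U e_\alpha:=e_{\tilde\alpha}$, which satisfies $U S_i U^*=R_i$; this realizes $\mu_\theta$ as the restriction to $\cR_n^*+\cR_n$ of the manifestly completely positive map $f\mapsto \Gamma^*(I_\cY\otimes U^* f U)\Gamma$.

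To identify $\text{\rm Re}\, W(rR_1,\ldots,rR_n)$ as the least pluriharmonic majorant, I would combine Theorem \ref{har-major} with relation \eqref{W-s} from the proof of Theorem \ref{Th-sub}: for each $s\in(0,1)$, $\varphi(s)=\text{\rm Re}\, W_s(R_1,\ldots,R_n)$, where $W_s$ is free holomorphic on an open neighborhood of $[B(\cH)^n]_1^-$. The Poisson reproducing property ${\bf P}_Y[W_s(R)]=W_s(Y)$ for $Y\in[B(\cH)^n]_1$ then yields
$${\bf P}_{X/\gamma}[\varphi(\gamma)] = \text{\rm Re}\, W_\gamma(X/\gamma).$$
Using the explicit expression for $W_s$ in terms of $\Lambda_s$ from the proof of Theorem \ref{Th-sub}, together with the strong convergence $\Lambda_s\to I$ as $s\to 1$, one deduces $\lim_{\gamma\to 1} W_\gamma(X/\gamma) = W(X)$ for $W$ as in \eqref{def-W}; by Theorem \ref{har-major} this is the least pluriharmonic majorant of $\varphi$. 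Finally, a direct power-series comparison shows that the $W$ of \eqref{def-W} coincides with the Herglotz--Riesz $W$ of \eqref{Herg}, since both have $\Gamma^*\Gamma=\mu_\theta(I)$ as constant term and $2\Gamma^*(I_\cY\otimes S_{\tilde\alpha}^*)\Gamma = 2\mu_\theta(R_{\tilde\alpha}^*)$ as coefficient of $X_\alpha$ for $|\alpha|\ge 1$. The main obstacle is this last paragraph: bridging the abstract least-majorant formula of Theorem \ref{har-major} (a norm limit of Poisson transforms of $\varphi(\gamma)$) with the Herglotz--Riesz representation phrased in $\mu_\theta$ and $R_i^*$ rather than $\Gamma$ and $S_i^*$, which is precisely what the flip unitary $U$ reconciles.
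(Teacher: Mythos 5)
Your proposal is correct and establishes every assertion of the theorem, but it departs from the paper's argument at two points worth recording. For the equivalence itself the difference is cosmetic: the paper pairs $\varphi(r)$ and a given majorant against vacuum vectors $h\otimes 1$ to obtain $\sum_\alpha r^{2|\alpha|}\|A_{(\alpha)}h\|^2\le \left<u(0)(h\otimes 1),h\otimes 1\right>$, while you package the same estimate as $\widetilde\tau[\varphi(r)]=\sum_\alpha r^{2|\alpha|}A_{(\alpha)}^*A_{(\alpha)}$ and invoke the general criterion of Theorem \ref{har-major} (legitimate, since Theorem \ref{Th-sub} gives sub-pluriharmonicity for an arbitrary free holomorphic $\Theta$). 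The genuine divergence is in identifying $\text{\rm Re}\, W$ as the \emph{least} majorant: the paper compares $\text{\rm Re}\, W$ against an arbitrary competitor $\text{\rm Re}\, G$ by combining \eqref{W-s} with compressions to the nilpotent truncations $R_i^{(m)}$, letting $s\to 1$ via \eqref{Ws-lim}, and then citing a limit result from \cite{Po-pluriharmonic}; you instead evaluate the canonical formula of Theorem \ref{har-major} directly, using ${\bf P}_{\frac{1}{\gamma}X}[\varphi(\gamma)]=\text{\rm Re}\, W_\gamma(\frac{1}{\gamma}X)$ and $\Lambda_\gamma\to I$. Your route buys a cleaner appeal to the general theory and avoids the nilpotent-compression step, but one point must be made explicit: $\Lambda_\gamma\to I$ only strongly, so your limit $W_\gamma(\frac{1}{\gamma}X)\to W(X)$ is a priori in the strong (or weak) operator topology, whereas Theorem \ref{har-major} asserts the Poisson-transform limit exists in norm; since that norm limit is known to exist, the weaker identification does pin it down as $\text{\rm Re}\, W(X)$, but the reconciliation should be stated. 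Finally, your use of the flip unitary $Ue_\alpha=e_{\tilde\alpha}$, $US_iU^*=R_i$, to exhibit $\mu_\theta$ as the compression by $\Gamma$ of a $*$-homomorphism is more explicit than the paper, which only asserts that the prescribed values $\Gamma^*(I_\cY\otimes S_{\tilde\alpha}^*)\Gamma$ determine a unique completely positive extension to $\cR_n^*+\cR_n$.
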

\begin{proof}  According to Theorem
\ref{Th-sub}, $\varphi$ is a  sub-pluriharmonic curve in
$B(\cE)\otimes_{min}C^*(R_1,\ldots, R_n)$. Assume that $\varphi$ has
a pluriharmonic majorant. Let $u$ be a free pluriharmonic function
on $[B(\cH)^n]_1$ with coefficients in $B(\cE)$ such that
$\varphi(r)\leq u(rR_1, \ldots, rR_n)$ for any $r\in [0,1)$. Let
$\Theta$ have the representation \begin{equation} \label{TH-again}
 \Theta(X_1,\ldots,
X_n):= \sum_{k=0}^\infty\sum_{|\alpha|=k} A_{(\alpha)}\otimes
X_\alpha,\qquad (X_1,\ldots, X_n)\in [B(\cH)^n]_1,
\end{equation}
and let $u$ have the representation
$$
u(X_1,\ldots, X_n)=\sum_{k=1}^\infty \sum_{|\alpha|=k}
 B_{(\alpha)}^*\otimes  X_\alpha^*+ \sum_{k=0}^\infty \sum_{|\alpha|=k}
 B_{(\alpha)}\otimes  X_\alpha,\qquad (X_1,\ldots, X_n)\in [B(\cH)^n]_1.
$$
Notice that $B_{(0)}\geq 0$ and, for each $h\in \cE$ and $r\in
[0,1)$, we have
\begin{equation*}
\begin{split}
\sum_{\alpha\in \FF_n^+} r^{2|\alpha|} \|A_{(\alpha)}h\|^2
&=\|\Theta(rR_1,\ldots, rR_n) (h\otimes 1)\|^2\\
&=\left<\varphi(r) (h\otimes 1), h\otimes 1\right> \leq \left<
u(rR_1, \ldots, rR_n)(h\otimes 1), h\otimes
1\right>\\
&=\left<u(0)(h\otimes 1), h\otimes 1\right> =\left<
B_{(0)}h,h\right>.
\end{split}
\end{equation*}
 Hence, we deduce that there is a constant $c>0$ such that
$\sum_{\alpha\in \FF_n^+} A_{(\alpha)}^* A_{(\alpha)}\leq cI_\cE$.
This shows that $ \Theta$ is in $H^2_{\bf ball} $.

 Conversely, assume that  $ \Theta$ is in $H^2_{\bf ball} $.  A
 closer look at the proof of Theorem \ref{Th-sub} (see
relation \eqref{ine-imp}) shows that  $\text{\rm Re}\, W$  is a
pluriharmonic
 majorant for $\varphi$, where $W$ is given by relation
 \eqref{def-W}.
It remains to show that  $\text{\rm Re}\, W$  is the least
pluriharmonic
 majorant for $\varphi$ and satisfies relation \eqref{Herg}.
 Let $G$ be a free holomorphic function on
$[B(\cH)^n]_1$  with coefficients in $B(\cE)$ such that $\text{\rm
Re}\, G\geq 0$ and
$$
\Theta(t R_1,\ldots, t R_n)^* \Theta(t R_1,\ldots, t R_n)\leq
\text{\rm Re}\, G(t R_1,\ldots, t R_n)
$$
for any $t\in [0,1)$. If  $s\in (0,1)$, then we have
$$
\Theta_s(R_1,\ldots,  R_n)^*\Theta_s(R_1,\ldots,  R_n)\leq \text{\rm
Re}\, G(s R_1,\ldots, s R_n).
$$
Using \eqref{W-s}, we deduce that
$$
\text{\rm Re}\, W_s( R_1,\ldots,  R_n)\leq \text{\rm Re}\, G(s
R_1,\ldots, s R_n).
$$
Taking the compression to  $\cE\otimes \cP^{(m)}$ (see the
definition preceding this theorem), we obtain
$$
\text{\rm Re}\, W_s( R_1^{(m)},\ldots,  R_n^{(m)})\leq \text{\rm
Re}\, G(s R_1^{(m)},\ldots, s R_n^{(m)}).
$$
Now, using relation \eqref{Ws-lim} and taking $s\to 1$, we obtain
$$
\text{\rm Re}\, W( R_1^{(m)},\ldots,  R_n^{(m)})\leq \text{\rm Re}\,
G( R_1^{(m)},\ldots,  R_n^{(m)})
$$
for any $m=0,1,\ldots $. According to \cite{Po-pluriharmonic}, we
deduce that $\text{\rm Re}\, W\leq \text{\rm Re}\, G$, which shows
that the map $ \psi(r):=\text{\rm Re}\, W(rR_1, \ldots rR_n)$, \
$r\in [0,1),$ is  the least pluriharmonic majorant for $\varphi$.

Notice that, due to relation  \eqref{W-def2}, we have
$$
W(X_1,\ldots, X_n) = \sum_{k=0}^\infty\sum_{|\alpha|=k}
C_{(\alpha)}\otimes X_\alpha,\qquad (X_1,\ldots, X_n)\in
[B(\cH)^n]_1,
$$
where
\begin{equation}
\label{Calpha} C_{(0)}=\Gamma^*\Gamma\quad \text{ and } \
C_{(\alpha)}=2\Gamma^*(I_\cY\otimes S_{\widetilde\alpha}^*)\Gamma
\quad \text{ if }\ |\alpha|\geq 1.
\end{equation}
Using the definition of $\Gamma$ and the  representation
\eqref{TH-again}, we deduce that, for any $\alpha\in \FF_n^+$ and
$x,y\in \cE$,
\begin{equation*}
\begin{split}
\left<\Gamma^*(I_\cY\otimes S_{\widetilde\alpha}^*)\Gamma x,y\right>
&= \left< \sum_{\beta\in \FF_n} A_{(\beta)}x\otimes e_{\tilde
\beta},(I_\cY\otimes S_{\widetilde\alpha}^*)\left(\sum_{\gamma\in
\FF_n} A_{(\gamma)}x\otimes e_{\tilde \gamma}\right)\right>\\
&= \sum_{\beta, \gamma\in \FF_n^+}\left<A_{(\beta)} x,
A_{(\gamma)}y\right>\left<e_{\tilde \beta}, e_{\tilde \alpha\tilde
\gamma}\right>\\
&=\sum_{ \gamma\in \FF_n^+}\left<A_{(\gamma\alpha)} x,
A_{(\gamma)}y\right>.
\end{split}
\end{equation*}
Therefore $\Gamma^*(I_\cY\otimes
S_{\widetilde\alpha}^*)\Gamma=\sum_{\gamma\in \FF_n^+} A_{(\gamma)}
A_{(\gamma\alpha)}$, $\alpha\in \FF_n^+$, where the convergence is
in the week operator topology.

On the other hand, notice that the limit in \eqref{def-mu} exists.
Indeed, since $\Theta$ is in the Hardy space $H^2_{\bf ball} $, we
have $\sum_{\beta\in \FF_n^+} \|A_\beta x\|^2\leq c\|x\|^2$ for some
constant $c>0$. This implies
$$
\lim_{r\to 1}\sum_{\beta\in \FF_n} r^{|\beta|}A_{(\beta)}x\otimes
e_{\tilde \beta}=\sum_{\beta\in \FF_n} A_{(\beta)}x\otimes e_{\tilde
\beta},
$$
whence
\begin{equation*}
\begin{split}
\lim_{r\to 1}&\left<\Theta(rR_1,\ldots, rR_n)^*(I_\cY\otimes
S_{\widetilde \alpha}^*)\Theta(rR_1,\ldots, rR_n)(x\otimes 1),
(y\otimes 1)\right>\\
&= \left< \sum_{\beta\in \FF_n}r^{|\beta|} A_{(\beta)}x\otimes
e_{\tilde \beta},(I_\cY\otimes
S_{\widetilde\alpha}^*)\left(\sum_{\gamma\in
\FF_n} r^{|\gamma|}A_{(\gamma)}x\otimes e_{\tilde \gamma}\right)\right>\\
&= \left< \sum_{\beta\in \FF_n}  A_{(\beta)}x\otimes e_{\tilde
\beta},(I_\cY\otimes S_{\widetilde\alpha}^*)\left(\sum_{\gamma\in
\FF_n}  A_{(\gamma)}x\otimes e_{\tilde \gamma}\right)\right>\\
&= \left<\Gamma^*(I_\cY\otimes S_{\widetilde\alpha}^*)\Gamma
x,y\right>.
\end{split}
\end{equation*}
Therefore $\mu_\theta(R_{\widetilde \alpha}^*)=\Gamma^*(I_\cY\otimes
S_{\widetilde\alpha}^*)\Gamma$  and $\mu_\theta(R_{\widetilde
\alpha})=\Gamma^*(I_\cY\otimes S_{\widetilde\alpha})\Gamma$ for any
$\alpha\in \FF_n^+$. Consequently,
$$\mu_\theta(p(R_1,\ldots, R_n))=
\Gamma^*[I_\cY\otimes  p(S_1,\ldots, S_n)]\Gamma
$$
 for any
polynomial $p(R_1,\ldots, R_n)=\sum_{|\alpha|\leq m} (b_\alpha
R_\alpha^*+ a_\alpha R_\alpha)$ in $\cR_n^*+\cR_n$. This implies
that $\mu_\theta$ has a unique extension to a completely positive
linear map on  $\cR_n^*+\cR_n$. According  to relation
\eqref{Calpha}, we have
$$
C_{(0)}=\mu_\theta(I) \quad \text{ and } \ C_{(\alpha)}=2\mu_\theta
(R_{\widetilde \alpha}^*)\quad \text{ if }\ |\alpha|\geq 1.
$$
Hence and due to the fact that $\mu_\theta$ is a bounded linear map,
we have
\begin{equation*}
\begin{split}
\widetilde{\mu}_\theta&\left[\left(I+\sum_{i=1}^n R_i^*\otimes
  X_i\right)\left(I-\sum_{i=1}^n R_i^*\otimes
  X_i\right)^{-1}\right]\\
&\qquad =
\widetilde{\mu}_\theta\left[I+2\sum_{k=1}^\infty\sum_{|\alpha|=k}
R_{\tilde\alpha}^*\otimes X_\alpha\right]\\
&\qquad =\mu_\theta(I)+2\sum_{k=1}^\infty\sum_{|\alpha|=k}
\mu(R_{\tilde\alpha}^*)\otimes X_\alpha\\
&\qquad =\sum_{k=0}^\infty\sum_{|\alpha|=k} C_{(\alpha)}\otimes
X_\alpha=W(X_1,\ldots, X_n),
\end{split}
\end{equation*}
which proves relation \eqref{Herg}. This completes the proof.
\end{proof}

We recall \cite{Po-pluriharmonic} that, in the particular case  when
 $f=I$, the identity on $F^2(H_n)$, and  $\mu$ is a bounded linear
functional on
 $C^*(R_1,\ldots, R_n)$, then the Berezin transform
$B_\mu(I, \cdot \,)$ coincides with the noncommutative Poisson
transform $\cP\mu:[B(\cH)^n]_1\to B(\cH)$  associated with $\mu$,
i.e.,
$$
(\cP\mu)(X_1,\ldots, X_n):=(\mu\otimes \text{\rm
id})\left[P(R,X)\right], \qquad X:=(X_1,\ldots,X_n)\in [B(\cH)^n]_1,
$$
where the Poisson kernel is given by
$$
P(R,X):=\sum_{k=1}^\infty\sum_{|\alpha=k}
R_{\widetilde\alpha}\otimes X_\alpha^*
+I+\sum_{k=1}^\infty\sum_{|\alpha=k} R_{\widetilde\alpha}^*\otimes
X_\alpha
$$
and the series are convergent in the operator norm topology. In the
particular case when $n=1$, $\cH=\CC$, $X=re^{i\theta}\in \DD$, and
$\mu$ is a complex Borel measure on $\TT$, the Poisson transform
$\cP\mu$ can be identified with the classical Poisson transform of
$\mu$, i.e.,
$$
\frac{1}{2\pi}\int_{-\pi}^{\pi} P_r(\theta-t)d\mu(t),
$$
where $P_r(\theta-t):=\frac{1-r^2}{1-2r\cos(\theta-t)+r^2}$ is the
Poisson kernel.

Using now Theorem \ref{har-major} and Theorem \ref{Th-sub2},  we can
deduce the following result.

\begin{corollary}\label{new-cond} Let  $ \Theta$ be  in
$H^2_{\bf ball}$ and have    the representation
\begin{equation*}
 \Theta(X_1,\ldots,
X_n):= \sum_{k=0}^\infty\sum_{|\alpha|=k} A_{(\alpha)}\otimes
X_\alpha,\qquad (X_1,\ldots, X_n)\in [B(\cH)^n]_1.
\end{equation*}
 Under the conditions of Theorem \ref{Th-sub2},  the  least
 pluriharmonic majorant $\text{\rm Re}\, W$  satisfies the relations
\begin{equation*}
\begin{split}
\text{\rm Re}\, W(X_1,\ldots, X_n)&=\lim_{t\to 1}{\bf
P}_{\frac{1}{t}X} [\Theta(tR_1,\ldots, tR_n)^*\Theta(tR_1,\ldots,
tR_n)]\\
&= (\cP\mu_\theta)( X_1,\ldots, X_n)\\
&=\sum_{k=1}^\infty \sum_{|\alpha|=k} D_{(\alpha)}^*\otimes
X_\alpha^* +\sum_{k=0}^\infty \sum_{|\alpha|=k} D_{(\alpha)}\otimes
X_\alpha
\end{split}
\end{equation*}
for any $X:=(X_1,\ldots, X_n)\in [B(\cH)^n]_1$, where $\cP
\mu_\theta$ is the noncommutative Poisson transform of $\mu_\theta$
   and $$ D_{(\alpha)}=\sum_{\gamma\in \FF_n^+} A_{(\gamma)}
A_{(\gamma\alpha)},\quad \alpha\in \FF_n^+,
$$
where the convergence is in the week operator topology.
\end{corollary}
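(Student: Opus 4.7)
The three claimed identities are three different presentations of the least pluriharmonic majorant $\text{\rm Re}\, W$ whose existence has already been established in Theorems \ref{har-major} and \ref{Th-sub2}. My plan is to derive each of them by a short calculation built on those two theorems.

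For the first identity I would simply apply Theorem \ref{har-major} (in its $R_i$ version, as mentioned in the closing remarks of Section 1) to the sub-pluriharmonic curve $\varphi$, which admits a pluriharmonic majorant by Theorem \ref{Th-sub2} because $\Theta\in H^2_{\bf ball}$. The least pluriharmonic majorant furnished by Theorem \ref{har-major} is exactly $X\mapsto \lim_{t\to 1}\mathbf{P}_{\frac{1}{t}X}[\varphi(t)]$, and by the uniqueness of the least pluriharmonic majorant this must agree with $\text{\rm Re}\, W$.

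For the second identity I would start from the Herglotz--Riesz representation \eqref{Herg}, expand the resolvent as a norm-convergent geometric series on $[B(\cH)^n]_1$, and use
\[
\Bigl(\sum_{i=1}^n R_i^*\otimes X_i\Bigr)^k=\sum_{|\alpha|=k}R_{\tilde\alpha}^*\otimes X_\alpha
\]
to obtain $W(X)=\mu_\theta(I)+2\sum_{k\geq 1}\sum_{|\alpha|=k}\mu_\theta(R_{\tilde\alpha}^*)\otimes X_\alpha$. Since $\mu_\theta$ is completely positive, $\mu_\theta(I)^*=\mu_\theta(I)$ and $\mu_\theta(R_{\tilde\alpha}^*)^*=\mu_\theta(R_{\tilde\alpha})$, so computing $\tfrac12(W+W^*)$ reproduces exactly $(\mu_\theta\otimes \text{\rm id})[P(R,X)]$, where $P(R,X)$ is the Poisson kernel recalled just before the corollary. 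This identifies $\text{\rm Re}\, W$ with $\mathcal{P}\mu_\theta$.

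For the third identity I would reuse the coefficient computations already performed in the proof of Theorem \ref{Th-sub2}: namely \eqref{Calpha}, giving $C_{(0)}=\Gamma^*\Gamma$ and $C_{(\alpha)}=2\Gamma^*(I_\cY\otimes S_{\tilde\alpha}^*)\Gamma$ for $|\alpha|\geq 1$, together with the weak-operator identity $\Gamma^*(I_\cY\otimes S_{\tilde\alpha}^*)\Gamma=\sum_\gamma A_{(\gamma)}^*A_{(\gamma\alpha)}$ established there. Thus $C_{(0)}=D_{(g_0)}$ and $C_{(\alpha)}=2D_{(\alpha)}$ for $|\alpha|\geq 1$, so writing $\text{\rm Re}\, W=\tfrac12(W+W^*)$ in terms of the power series of $W$ yields the stated expansion. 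The only mild obstacle is bookkeeping: keeping the word reversals $\tilde\alpha$, the factor of $2$ in the Herglotz--Riesz kernel, and the alternation of $X_\alpha$ and $X_\alpha^*$ consistently aligned across the three computations.
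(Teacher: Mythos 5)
Your proof is correct and matches the paper's intent exactly: the corollary is stated in the paper with no more justification than ``combine Theorem \ref{har-major} (in its right-creation-operator version) with Theorem \ref{Th-sub2}'', and your three computations reproduce the coefficient identities \eqref{Calpha}, the weak-operator formula for $\Gamma^*(I_\cY\otimes S_{\tilde\alpha}^*)\Gamma$, and the Herglotz--Riesz expansion already carried out inside the proof of Theorem \ref{Th-sub2}. One small remark: your formula $D_{(\alpha)}=\sum_\gamma A_{(\gamma)}^*A_{(\gamma\alpha)}$, with the adjoint on $A_{(\gamma)}$, is the one actually forced by the inner-product computation (and by the fact that $A_{(\gamma)},A_{(\gamma\alpha)}\in B(\cE,\cY)$ do not otherwise compose); the paper's statement omits the star, which is a typo.
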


\bigskip

\section{ The unit ball of $H^2_{\bf ball}$, geometric structure, and representations  }

In this section we obtain a characterization of  the unit ball of
$H^2_{\bf ball}$ and  provide a parametrization and concrete
representations of all pluriharmonic majorants  for   the
sub-pluriharmonic curve
$$ \varphi(r):=\Theta(rR_1,\ldots,
rR_n)^*\Theta(rR_1,\ldots, rR_n),\qquad r\in [0,1),$$ where  $
\Theta$ is in the unit ball of  $H^2_{\bf ball}$.

We need a few notations. As in the previous section, we denote by
$H_{\text{\bf ball}}(B(\cE,\cY))$ the set of all free holomorphic
functions on the noncommutative ball $[B(\cH)^n]_1$ and coefficients
in $B(\cE,\cY)$. Let $H_{\text{\bf ball}}^\infty (B(\cE,\cY))$
denote the set of all elements $F$ in  $H_{\text{\rm
ball}}(B(\cE,\cY))$  such that
$$
\|F\|_\infty:=\sup  \|F(X_1,\ldots, X_n)\|<\infty,
$$
where the supremum is taken over all $n$-tuples  of operators
$(X_1,\ldots, X_n)\in [B(\cH)^n]_1$, where $\cH$ is an infinite
dimensional   Hilbert space. Denote by $H^+_{\bf ball}(B(\cE))$ the
set of all free holomorphic functions $f$   on the noncommutative
ball $[B(\cH)^n]_1$ with coefficients in $B(\cE)$, where $\cE$ is a
separable Hilbert space, such that $\text{\rm Re}\, f\geq 0$.
Consider  the following sets:
\begin{equation*}
\begin{split}
{\bf H}_1^+(B(\cE)) &:=\left\{ f\in H^+_{\bf ball}(B(\cE)) :\
f(0)=I \right\} \text{ and } \\
{\bf H}_ 0^\infty (B(\cE))&:=\left\{ g\in H_{\bf ball}^\infty(B(\cE)
): \ g(0)=0\right\}.
\end{split}
\end{equation*}
According to \cite{Po-free-hol-interp}, the {\it noncommutative
Cayley transform} is a bijection
$$\text{\boldmath{$\cC$}}:  {\bf H}_1^+(B(\cE))\to
\left[ {\bf H}_ 0^\infty (B(\cE))\right]_{\leq 1}\quad \text{
defined by } \ \text{\boldmath{$\cC$}} [f]:=g,$$
 where $g\in  \left[ {\bf H}_ 0^\infty (B(\cE))\right]_{\leq 1}$ is
  uniquely determined by
the formal power series
  $( \widetilde
 f-1)(1+\widetilde f)^{-1}$, where
$\widetilde f$ is the power series associated with $f$. In this
case, we have
$$\text{\boldmath{$\cC$}}^{-1}[G](X)=[I+G(X)][I-G(X)]^{-1}, \qquad X\in [B(\cH)^n]_1.
$$

We recall that if $T:\cH\to \cH'$ is a contraction, then
$D_T:=(I-T^*T)^{1/2}$  and $\cD_T:=\overline{D_T\cH}$.
 The first result of this section provides a  parametrization
  for the pluriharmonic majorants of $\Theta^* \Theta$.

\begin{theorem}
\label{parametriz} Let  $ \Theta$  be a free holomorphic function in
the unit ball of  $H^2_{\bf ball}( B(\cE,\cY))$, and let $F$ be  a
free holomorphic function  in $H_{\bf ball}(B(\cE))$   such that
$F(0)=I$. Then
\begin{equation}
\label{T<RE} \Theta(rR_1,\ldots, rR_n)^*\Theta(rR_1,\ldots,
rR_n)\leq \text{\rm Re}\, F(rR_1,\ldots, rR_n) \quad \text{ for } \
r\in [0,1),
\end{equation}
if and only if there exists $G$ in the unit ball  of  ${\bf H}_
0^\infty ( B(\cD_\Gamma))$ \, such that
\begin{equation}
\label{formula} F(X)=W(X) + \left(D_\Gamma \otimes
I\right)[I+G(X)][I-G(X)]^{-1}\left(D_\Gamma \otimes I\right),\quad
X\in [B(\cH)^n]_1,
\end{equation}
 where $\Gamma$  is the symbol of  $\Theta$ and $W$ is defined by relation
 \eqref{def-W}. Moreover, $F$ and $G$ uniquely
determine each other.
\end{theorem}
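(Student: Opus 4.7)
The plan is to establish the biconditional by composing three natural correspondences: the Herglotz--Riesz representation for free holomorphic functions with positive real part, a Stinespring--type factorization through $D_\Gamma$, and the noncommutative Cayley transform of \cite{Po-free-hol-interp}. The key reduction is to set $H:=F-W$; this is a free holomorphic function in $H_{\bf ball}(B(\cE))$ with $H(0)=F(0)-W(0)=I-\Gamma^*\Gamma=D_\Gamma^2$, and by Theorem \ref{Th-sub2} the inequality \eqref{T<RE} is equivalent to $\text{\rm Re}\,H(X)\geq 0$ on $[B(\cH)^n]_1$, since $\text{\rm Re}\,W$ is the least pluriharmonic majorant of $\varphi$.

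For the sufficiency ($\Leftarrow$), assume $F$ has the form \eqref{formula} with $G$ in the unit ball of ${\bf H}_0^\infty(B(\cD_\Gamma))$. Then $K(X):=[I+G(X)][I-G(X)]^{-1}$ is the inverse Cayley transform of $G$, so $K\in{\bf H}_1^+(B(\cD_\Gamma))$ and $\text{\rm Re}\,K\geq 0$ on $[B(\cH)^n]_1$. Conjugation by the self-adjoint operator $D_\Gamma\otimes I$ preserves positivity, hence $\text{\rm Re}\,(F-W)\geq 0$; at $X=0$ the identity $\Gamma^*\Gamma+D_\Gamma^2=I$ gives $F(0)=I$; and Theorem \ref{Th-sub2} yields $\text{\rm Re}\,F(rR)\geq\text{\rm Re}\,W(rR)\geq\Theta(rR)^*\Theta(rR)$ for every $r\in[0,1)$.

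For the necessity ($\Rightarrow$), I would apply the operator-valued Herglotz--Riesz representation to $H$ to obtain a completely positive linear map $\mu_H:\cR_n^*+\cR_n\to B(\cE)$ with $\mu_H(I)=H(0)=D_\Gamma^2$ and $H(X)=\widetilde{\mu}_H\bigl[(I+\sum_{i=1}^nR_i^*\otimes X_i)(I-\sum_{i=1}^nR_i^*\otimes X_i)^{-1}\bigr]$. The main step is then a Stinespring factorization through $D_\Gamma$: writing $\mu_H(a)=V^*\pi(a)V$ with $V^*V=\mu_H(I)=D_\Gamma^2$ via the Stinespring dilation, the polar decomposition of $V$ gives $V=UD_\Gamma$ for an isometry $U:\cD_\Gamma\to\cK$. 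Setting $\nu(a):=U^*\pi(a)U$ produces a completely positive unital map $\nu:\cR_n^*+\cR_n\to B(\cD_\Gamma)$ satisfying $\mu_H(a)=D_\Gamma\nu(a)D_\Gamma$. Applying the Herglotz--Riesz correspondence to $\nu$ yields $K\in{\bf H}_1^+(B(\cD_\Gamma))$, and the identity $\mu_H=D_\Gamma\nu(\,\cdot\,)D_\Gamma$ lifts tensorially to $H(X)=(D_\Gamma\otimes I)K(X)(D_\Gamma\otimes I)$. Finally set $G:=\text{\boldmath{$\cC$}}[K]$; by the Cayley bijection this is an element of the unit ball of ${\bf H}_0^\infty(B(\cD_\Gamma))$ with $K=[I+G][I-G]^{-1}$, so $F=W+H$ has the form \eqref{formula}.

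Uniqueness of $F\leftrightarrow G$ follows from the injectivity of each stage: $F$ determines $H$, hence $\mu_H$ by the uniqueness of the Herglotz--Riesz representation, hence $\nu$ (well defined because $\ker D_\Gamma\subseteq\ker\mu_H(a)$ for every $a\geq 0$, a consequence of $0\leq\mu_H(a)\leq\|a\|D_\Gamma^2$), hence $K$, and finally $G$ via the Cayley bijection of \cite{Po-free-hol-interp}. The main obstacle is the Stinespring factorization step: one must ensure that $\nu$ inherits complete positivity, unitality, and the correct target $B(\cD_\Gamma)$ from the dilation data, which is why passing through the Stinespring triple is essential rather than attempting to invert $D_\Gamma$ directly (which may fail to be injective).
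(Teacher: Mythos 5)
Your proposal is correct in both directions, and the sufficiency half coincides with the paper's. The necessity half, however, takes a genuinely different route at the key step, namely the factorization of $\Psi:=F-W$ through $D_\Gamma$. The paper works entirely at the level of Taylor coefficients: writing $\Psi(X)=\sum_\alpha Q_{(\alpha)}\otimes X_\alpha$ with $Q_{(0)}=D_\Gamma^2$, it compresses $\Psi(rR)^*+\Psi(rR)$ to the subspaces $\cE\otimes\text{\rm span}\{1,e_\alpha:|\alpha|=k\}$, lets $r\to 1$, and uses the standard criterion for positivity of a $2\times 2$ block operator matrix to produce contractions $Z_{(\alpha)}$ with $Q_{(\alpha)}=2D_\Gamma Z_{(\alpha)}D_\Gamma$; these contractions are then assembled into the free holomorphic function $\Lambda$ (your $K$) with $\Lambda(0)=I$ and $\text{\rm Re}\,\Lambda\geq 0$. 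You instead invoke the operator-valued noncommutative Herglotz--Riesz representation to get a completely positive $\mu_H$ on $\cR_n^*+\cR_n$ with $\mu_H(I)=D_\Gamma^2$, and then factor $\mu_H=D_\Gamma\,\nu(\cdot)\,D_\Gamma$ through a Stinespring dilation. This is a clean and more conceptual argument, and your uniqueness discussion (via $0\leq\mu_H(a)\leq\|a\|D_\Gamma^2$, hence $\ker D_\Gamma\subseteq\ker\mu_H(a)$) is actually more explicit than the paper's, which simply declares the factorization unique. Two caveats. First, the general Herglotz--Riesz theorem for operator-valued free holomorphic functions with positive real part is not proved in this paper; Theorem \ref{Th-sub2} only derives the representation for the particular function $W$ built from $\Gamma$. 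You are leaning on the companion preprints \cite{Po-pluriharmonic}, \cite{Po-free-hol-interp}, whereas the paper's block-matrix argument is self-contained -- indeed the positivity of those block matrices is essentially the content of the Herglotz theorem in disguise, so your route packages the same positivity into heavier machinery. Second, Stinespring requires a completely positive map on a $C^*$-algebra, while $\mu_H$ lives on the operator system $\cR_n^*+\cR_n$; you should insert an Arveson extension to $C^*(R_1,\ldots,R_n)$ (which preserves $\mu_H(I)$) before dilating. With these two points acknowledged, the argument goes through.
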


\begin{proof} Let $G$ be  in the unit ball of  \,  ${\bf H}_ 0^\infty ( B(\cD_\Gamma))$
 and define $F$
by relation \eqref{formula}. Due to \eqref{W-def2}, we have
$W(0)=\Gamma^* \Gamma \otimes I_\cH$. Since $ G(0)=I$, we deduce
that
\begin{equation*}
\begin{split}
F(0)&=W(0)+ \left(D_\Gamma \otimes I_\cH\right)
[I+G(0)][I-G(0)]^{-1}\left(D_\Gamma \otimes I_\cH\right)\\
&=(\Gamma^* \Gamma +D_\Gamma^2)\otimes I_\cH= I.
\end{split}
\end{equation*}
According to Theorem \ref{Th-sub} (see \eqref{ine-imp}), we have
\begin{equation}
\Theta(rR_1,\ldots, rR_n)^*\Theta(rR_1,\ldots, rR_n)\leq \text{\rm
Re}\, W(rR_1,\ldots, rR_n) \quad \text{ for } \ r\in [0,1).
\end{equation}
Due to the properties of the noncommutative Cayley transform, we
have
$$\text{\rm Re}\, \text{\boldmath{$\cC$}}^{-1}[G](rR_1,\ldots, rR_n)
\geq 0\quad \text{ for any } \ r\in [0,1).
$$ Consequently, we have
\begin{equation*}
\begin{split}
\text{\rm Re}\,F(rR_1,\ldots, rR_n)&=\text{\rm Re}\,W(rR_1,\ldots,
rR_n) + \left(D_\Gamma \otimes I\right)[\text{\rm
Re}\,\text{\boldmath{$\cC$}}^{-1}[G](rR_1,\ldots,
rR_n)]\left(D_\Gamma \otimes I\right)\\
&\geq \Theta(rR_1,\ldots, rR_n)^*\Theta(rR_1,\ldots, rR_n)
\end{split}
\end{equation*}
for any   $ r\in [0,1)$. Therefore, $F$ satisfies inequality
\eqref{T<RE}.

Conversely,   assume that $F$ is a free holomorphic function  with
$F(0)=I$ and satisfying relation \eqref{T<RE}. According to Theorem
\ref{Th-sub2},
$$\text{\rm Re}\,F(rR_1,\ldots, rR_n)\geq \text{\rm Re}\,W(rR_1,\ldots, rR_n)\quad
\text{ for any } \ r\in[0,1).
$$
Hence, $\Psi:=F-W$ is a  free holomorphic function with positive
real part and
$$\Psi(0)=F(0)-W(0)=(I-\Gamma^*\Gamma)\otimes
I_\cH=D_\Gamma ^2\otimes I_\cH.
$$
We claim that $\Psi$ has a unique factorization of the form
\begin{equation}
\label{factori} \Psi(X)=(D_\Gamma\otimes I_\cH)
\Lambda(X)(D_\Gamma\otimes I_\cH),\quad X\in [B(\cH)^n]_1,
\end{equation}
where $\Lambda$ is a free holomorphic function  with coefficients in
$B(\cD_\Gamma)$ such that $\Lambda(0)=I$ and $\text{\rm Re}\,\Lambda
\geq 0$.
To this end, assume that  $\Psi$ has the representation
$\Psi(X)=\sum_{\alpha \in \FF_n^+} Q_{(\alpha)}\otimes X_\alpha$,
 $Q_{(\alpha)}\in B(\cE)$, with $Q_{(0)}=D_\Gamma^2$.  For each $k\geq 1$, consider the
subspace $\cM:=\text{\rm span}\,\{1, e_\alpha:\ \alpha\in \FF_n^+
\text{ and } |\alpha|=k\}$. Notice that, for each $r\in [0,1)$, the
positive  operator
$$P_{\cE\otimes \cM} [\Psi(rR_1,\ldots, rR_n)^*
+\Psi(rR_1,\ldots, rR_n)]|_{\cE\otimes \cM}
$$
has the operator   matrix representation

$$M(r):= \left[\begin{matrix}
2D_\Gamma^2 &[r^{|\alpha|}Q_{(\alpha)}:\ |\alpha|=k]\\
\left[\begin{matrix}  r^{|\alpha|}Q_{(\alpha)}\\
:\\
|\alpha|=k
\end{matrix}
\right]& \left[\begin{matrix}
 2D_\Gamma^2&\cdots& 0\\
 \vdots&\ddots&\vdots\\
0&\cdots & 2D_\Gamma^2
\end{matrix}\right]
\end{matrix}
 \right],
$$
where $[r^{|\alpha|}Q_{(\alpha)}:\ |\alpha|=k]$ is the row operator
with  the entries  $r^{|\alpha|}Q_{(\alpha)}$,  $|\alpha|=k$. The
notation for the column operator is now obvious. Taking $r\to 1$, we
deduce that $M:=M(1)\geq 0$.

We recall (see \cite{GGK}) that  a block operator matrix
$\left[\begin{matrix} A&B\\
B^*&C \end{matrix}\right]$, where $A\in B(\cH)$, $B\in B(\cG, \cH)$,
and $C\in B(\cG)$, is positive if and only if $A$ and $C$ are
positive and there exists a contraction $T:\overline{C\cG}\to
\overline{A\cH}$ satisfying $B^*=C^{1/2} T^* A^{1/2}$. Applying this
result to the operator $M$, we find  a contraction
$$
\left[\begin{matrix}   Z_{(\alpha)}\\
:\\
|\alpha|=k
\end{matrix}
\right] :\cD_\Gamma\to \oplus_{|\alpha|=k} \cD_\Gamma
$$
such that
$$
\left[\begin{matrix}   Q_{(\alpha)}\\
:\\
|\alpha|=k
\end{matrix}
\right] =
 \left[\begin{matrix}
 2D_\Gamma&\cdots& 0\\
 \vdots&\ddots&\vdots\\
0&\cdots & 2D_\Gamma
\end{matrix}
 \right]
 \left[\begin{matrix}   Z_{(\alpha)}\\
:\\
|\alpha|=k
\end{matrix}
\right] D_\Gamma
$$
for each $k\geq 1$. Hence, we have $Q_{(\alpha)}=D_\Gamma
K_{(\alpha)} D_\Gamma$, where $K_{(0)}=I$ and
$K_{(\alpha)}:=2Z_{(\alpha)}$ for any $\alpha\in \FF_n^+$ with
$|\alpha|=k\geq 1$. Since $\sum_{|\alpha|=k} Z_{(\alpha)}^*
Z_{(\alpha)}\leq I$, we deduce that $\lim_{k\to\infty}
\left\|\sum_{|\alpha|=k} K_{(\alpha)}^*
K_{(\alpha)}\right\|^{\frac{1} {2k}} \leq 1$. This implies (see
\cite{Po-holomorphic}) that
$$\Lambda(X_1,\ldots, X_n)=\sum_{k=0} \sum_{|\alpha|=k}
K_{(\alpha)}\otimes X_\alpha,\quad (X_1,\ldots, X_n)\in
[B(\cH)^n]_1,
$$
is a free holomorphic function on $[B(\cH)^n]_1$. Using  the
relations above, we deduce the factorization
$\Psi(X)=(D_\Gamma\otimes I_\cH) \Lambda(X)(D_\Gamma\otimes
I_\cH),\quad X\in [B(\cH)^n]_1.$

Now, since  the operator $\Lambda(rR_1,\ldots, rR_n)$,  $r\in
[0,1)$, is acting on the Hilbert space $\cD_\Gamma\otimes F^2(H_n)$
and
\begin{equation*}
\begin{split}
\Psi(rR_1,\ldots, rR_n)^* &+\Psi(rR_1,\ldots, rR_n)\\
&= (D_\Gamma\otimes I_{F^2(H_n)})[\Lambda(rR_1,\ldots, rR_n)^*
+\Lambda(rR_1,\ldots, rR_n)](D_\Gamma\otimes I_{F^2(H_n)}),
\end{split}
\end{equation*}
we deduce that $\Lambda(rR_1,\ldots, rR_n)^* +\Lambda(rR_1,\ldots,
rR_n)\geq 0$ for any $r\in [0,1)$. Using the noncommutative Poisson
transform, we have $\text{\rm Re}\, \Lambda\geq 0$, which proves our
claim. Due to the properties of the Cayley transform,
$G:=\text{\boldmath{$\cC$}}[\Lambda]$ is in  the unit ball of
$H_0^\infty(B(\cD_\Gamma))$. Finally,  since $\Psi:=F-W$ and  using
the factorization \eqref{factori}, we deduce \eqref{formula}. The
fact that $F$ and $G$ uniquely determine each other is  now obvious
since the Cayley transform is a bijection. This completes the proof.
\end{proof}

\begin{corollary}
Under the conditions of Theorem $\ref{parametriz}$, there is only
one $F$ satisfying \eqref{T<RE} if and only if $\Gamma$ is an
isometry. In this case, $F=W$.
\end{corollary}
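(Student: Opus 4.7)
The plan is to read this off directly from the parametrization in Theorem \ref{parametriz}, which says the solutions $F$ of \eqref{T<RE} with $F(0)=I$ are in bijection with the elements $G$ in the unit ball of ${\bf H}_0^\infty(B(\cD_\Gamma))$ via formula \eqref{formula}. Uniqueness of $F$ is therefore the same as the parameter space ${\bf H}_0^\infty(B(\cD_\Gamma))$ being a singleton, which is controlled by the size of $\cD_\Gamma$.

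For the ``if'' direction, I would argue that if $\Gamma$ is an isometry then $D_\Gamma=0$, so $\cD_\Gamma=\{0\}$, the coefficient space $B(\cD_\Gamma)$ is trivial, and the only free holomorphic function with values in this space is $G\equiv 0$. Plugging $G=0$ into \eqref{formula} gives $F=W$ (using that $(D_\Gamma\otimes I)\,\cdot\,(D_\Gamma\otimes I)=0$), so $W$ is the unique $F$ satisfying \eqref{T<RE} with $F(0)=I$.

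For the converse, suppose $\Gamma$ is not an isometry. Then $\cD_\Gamma\neq\{0\}$, so one can pick a nonzero contraction $T\in B(\cD_\Gamma)$ and form the two distinct elements $G_1\equiv 0$ and $G_2(X):=T\otimes X_1$ of the unit ball of ${\bf H}_0^\infty(B(\cD_\Gamma))$ (both vanish at $0$, and $\|G_2(X)\|\leq\|T\|\|X_1\|<1$ on $[B(\cH)^n]_1$). The uniqueness assertion in Theorem \ref{parametriz} guarantees that the two corresponding $F$'s given by \eqref{formula} are distinct, contradicting uniqueness of $F$.

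Combining the two directions yields the equivalence, and the ``In this case, $F=W$'' statement is recorded along the way in the ``if'' direction. There is no real obstacle here: everything follows from the bijection in Theorem \ref{parametriz} once one observes that the Defect space $\cD_\Gamma$ controls whether the parameter set is trivial or not.
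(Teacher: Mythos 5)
Your argument is correct and is precisely the intended deduction: the paper states this corollary without proof, and the only reasonable route is to read it off from the bijection $G\mapsto F$ of Theorem \ref{parametriz}, noting that the parameter set $\left[{\bf H}_0^\infty(B(\cD_\Gamma))\right]_{\leq 1}$ is a singleton exactly when $\cD_\Gamma=\{0\}$, i.e.\ when $\Gamma$ is an isometry, in which case \eqref{formula} collapses to $F=W$. Your witness $G_2(X)=T\otimes X_1$ for the converse is a valid element of the unit ball of ${\bf H}_0^\infty(B(\cD_\Gamma))$, and the injectivity clause of the theorem finishes the argument.
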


According to \cite{Po-holomorphic} and \cite{Po-pluriharmonic}, the
noncommutative Hardy space
  $H_{\text{\rm ball}}^\infty (B(\cE,\cY))$   can be identified to the operator
  space
$ B(\cE,\cY)\bar\otimes F_n^\infty$ (the weakly closed operator
space generated by the spatial tensor product), where $F_n^\infty$
is the noncommutative analytic Toeplitz algebra (see \cite{Po-von},
\cite{Po-multi}, \cite{Po-analytic}). We consider the noncommutative
Schur class
$$
\cS_{\text{\bf ball}}(B(\cE,\cY)):=\left\{G\in H_{\text{\bf
ball}}^\infty (B(\cE,\cY)):\ \|G\|_\infty\leq 1\right\},
$$
which can be identified to the unit ball of  the operator
  space
$ B(\cE,\cY)\bar\otimes F_n^\infty$. We also use the notation
$$\cS^0_{\text{\bf ball}}(B(\cE,\cY)):=\{G\in \cS_{\text{\bf
ball}}(B(\cE,\cY)):\ G(0)=0\}.
$$

In what follows we use the notation $\cE^{(n)}$ for the direct sum
of $n$-copies of the  Hilbert space $\cE$. The next results provides
a Schur type representation for the unit ball of $H^2_{\bf ball}$.

\begin{theorem}
\label{one-to-one} Let $\Theta \in H_{\text{\bf ball}}^2
(B(\cE,\cY))$ be such that $\|\Theta\|_2\leq 1$, and let
$\Gamma_\theta:\cE\to \cY\otimes F^2(H_n)$ be its symbol. Then there
is a one-to-one correspondence $J_\Theta$ between the noncommutative
Schur class $\cS_{\text{\bf
ball}}(B(\cD_{\Gamma_\theta},\cD_{\Gamma_\theta}^{(n)}))$ and the
set of all function matrices
$\left[\begin{matrix} L\\
M_1\\
\vdots\\
M_n\end{matrix}\right]$  in  the noncommutative   Schur class \,
$\cS_{\text{\bf ball}}(B(\cE,\cY\oplus \cE^{(n)}))$ satisfying  the
equation
\begin{equation}
\label{Th-rep} \Theta(X)=L(X)\left[ I_{\cE\otimes \cH}-
\sum_{i=1}^n(I_\cE\otimes X_i)M_i(X)\right]^{-1}\quad  \text{ for
  } \ X:=(X_1,\ldots, X_n)\in [B(\cH)^n]_1.
\end{equation}
More precisely,  the map
$$
J_\Theta: \cS_{\text{\bf
ball}}(B(\cD_{\Gamma_\theta},\cD_{\Gamma_\theta}^{(n)}))\to
\cS_{\text{\bf ball}}(B(\cE,\cY\oplus \cE^{(n)}))
$$
is defined by setting
$$
J_\Theta \left[\begin{matrix}
\varphi_1\\
\vdots\\
\varphi_n\end{matrix}\right]=\left[\begin{matrix} L\\
M_1\\
\vdots\\
M_n\end{matrix}\right],
$$
where $L\in H_{\text{\bf ball}} (B(\cE,\cY))$ is  given  by
\begin{equation}
\label{L-def}
 L(X)=2\Theta(X)[F(X)+I]^{-1}, \quad X\in [B(\cH)^n]_1,
\end{equation}
 the free holomorphic function $F\in H_{\text{\bf ball}} (B(\cE ))$ is  defined  by
 \begin{equation}\label{F-def}
 \begin{split}
 F(X)&:= (\Gamma_\theta^*\otimes I_\cH)
\left(I_{\cY\otimes F^2(H_n)\otimes \cH}+ \sum_{i=1}^n I_\cY\otimes
S_i^*\otimes X_i\right) \\
&\qquad \times \left(I_{\cY\otimes F^2(H_n)\otimes \cH}-
\sum_{i=1}^n I_\cY\otimes
S_i^*\otimes X_i\right)^{-1}(\Gamma_\theta\otimes I_\cH)\\
 &\qquad +(D_{\Gamma_\theta}\otimes I_\cH)
 \left[ I+\sum_{i=1}^n (I_\cE\otimes X_i)\varphi_i(X)\right]
 \left[ I-\sum_{i=1}^n (I_\cE\otimes X_i)\varphi_i(X)\right]^{-1}(D_{\Gamma_\theta}\otimes
 I_\cH),
 \end{split}
 \end{equation}
and $M_1,\ldots, M_n\in H_{\text{\bf ball}} (B(\cE ))$ are uniquely
determined by the equation
\begin{equation}
\label{M-def}(I_\cE\otimes X_1)M_1+\cdots +(I_\cE\otimes X_n)M_n=
[F(X)-I][F(X)+ I]^{-1}, \quad X:=(X_1,\ldots, X_n)\in [B(\cH)]_1.
\end{equation}
In particular, the representation \eqref{Th-rep} is unique if and
only if $\Gamma_\theta$ is an isometry.
\end{theorem}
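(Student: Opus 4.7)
I plan to realize $J_\Theta$ as the composition of three bijections, with Theorem \ref{parametriz} supplying the central one. Introduce the intermediate class
\begin{equation*}
\cF := \bigl\{F\in H_{\text{\bf ball}}(B(\cE)):\ F(0)=I,\ \Theta(rR_1,\ldots,rR_n)^*\Theta(rR_1,\ldots,rR_n)\leq \text{\rm Re}\,F(rR_1,\ldots,rR_n),\ r\in[0,1)\bigr\}.
\end{equation*}
By Theorem \ref{parametriz}, $\cF$ is in bijection with the unit ball of ${\bf H}_0^\infty(B(\cD_{\Gamma_\theta}))$ via formula \eqref{formula}. The plan has two further steps: (A) identify that unit ball with the column Schur class $\cS_{\text{\bf ball}}(B(\cD_{\Gamma_\theta},\cD_{\Gamma_\theta}^{(n)}))$; (B) identify $\cF$ with the set of Schur-class columns $[L,M_1,\ldots,M_n]^t$ satisfying \eqref{Th-rep}. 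The composition of the three bijections is $J_\Theta$, and inspection of the intermediate formulas then produces \eqref{L-def}, \eqref{F-def}, \eqref{M-def}.

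\noindent\textbf{Step A: columns versus ${\bf H}_0^\infty$.} Given $[\varphi_1,\ldots,\varphi_n]^t\in\cS_{\text{\bf ball}}(B(\cD_{\Gamma_\theta},\cD_{\Gamma_\theta}^{(n)}))$, set $G(X):=\sum_{i=1}^n(I_{\cD_{\Gamma_\theta}}\otimes X_i)\varphi_i(X)$. Then $G(0)=0$, and evaluating at the right creation operators with $R_i^*R_j=\delta_{ij}I$ gives $G(R)^*G(R)=\sum_i\varphi_i(R)^*\varphi_i(R)$, so $\|G\|_\infty=\|[\varphi_i]^t\|_\infty\leq 1$. Conversely, any $G\in{\bf H}_0^\infty(B(\cD_{\Gamma_\theta}))$ admits a unique ``first-letter'' decomposition $G=\sum_i(I\otimes X_i)\varphi_i$ by grouping its monomials by the leftmost generator, recovering the column with the same norm.

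\noindent\textbf{Step B: the algebraic core.} For $F\in\cF$, the inequality $\Theta^*\Theta\leq\text{\rm Re}\,F$ forces $\text{\rm Re}\,F\geq 0$, hence $\text{\rm Re}(F+I)\geq I$, so $F+I$ is uniformly invertible on $[B(\cH)^n]_1$ with $[F+I]^{-1}$ a free holomorphic contraction. Define
\begin{equation*}
L:=2\Theta[F+I]^{-1},\qquad H:=[F-I][F+I]^{-1}.
\end{equation*}
Since $H(0)=0$, Step A provides a unique decomposition $H=\sum_i(I_\cE\otimes X_i)M_i$ with $M_i$ free holomorphic. The elementary identity $I-H=2[F+I]^{-1}$ gives $[I-H]^{-1}=(F+I)/2$, whence $L[I-H]^{-1}=\Theta$, which is exactly \eqref{Th-rep}. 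To verify the Schur-norm bound, I evaluate at the right creation operators, use $R_i^*R_j=\delta_{ij}I$ to obtain $H(R)^*H(R)=\sum_i M_i(R)^*M_i(R)$, and compute
\begin{equation*}
L(R)^*L(R)+\sum_i M_i(R)^*M_i(R)=(F(R)+I)^{-*}\bigl[4\Theta(R)^*\Theta(R)+(F(R)-I)^*(F(R)-I)\bigr](F(R)+I)^{-1}.
\end{equation*}
Combining with the identity $(F^*+I)(F+I)-(F^*-I)(F-I)=4\,\text{\rm Re}\,F$, this quantity is $\leq I$ if and only if $\Theta^*\Theta\leq\text{\rm Re}\,F$, which holds since $F\in\cF$. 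Thus $[L,M_1,\ldots,M_n]^t\in\cS_{\text{\bf ball}}(B(\cE,\cY\oplus\cE^{(n)}))$. Conversely, starting from a Schur-class column $[L,M_1,\ldots,M_n]^t$ satisfying \eqref{Th-rep}, setting $H:=\sum_i(I\otimes X_i)M_i$ (which automatically satisfies $\|H\|_\infty\leq 1$ by Step A and the column bound) and $F:=[I+H][I-H]^{-1}$, and running the same identities in reverse, recovers $F\in\cF$ with $L$ and $M_i$ given by the formulas above. The two constructions are mutually inverse.

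\noindent\textbf{Composition and uniqueness.} Composing Step A, Theorem \ref{parametriz}, and Step B yields $J_\Theta$; substituting \eqref{formula} for $F$ and unwinding the definitions of $L$ and the $M_i$ reproduces exactly the formulas \eqref{L-def}, \eqref{F-def}, \eqref{M-def}. The representation \eqref{Th-rep} is unique precisely when the parameter $[\varphi_i]^t$ is forced to vanish, and by the corollary to Theorem \ref{parametriz} this happens exactly when $\cD_{\Gamma_\theta}=\{0\}$, i.e., when $\Gamma_\theta$ is an isometry. The main technical obstacle I anticipate is the rigorous handling of $\Theta(R)$ when $\Theta\in H^2_{\text{\bf ball}}\setminus H^\infty_{\text{\bf ball}}$, where $\Theta(R)$ is only densely defined: all identities at $X=R$ must be justified as limits of their counterparts at $X=rR$ for $r<1$, via the noncommutative Poisson transform and the continuity results employed in the proof of Theorem \ref{Th-sub2}.
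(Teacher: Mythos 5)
Your proposal is correct and follows essentially the same route as the paper: Theorem \ref{parametriz} supplies the central bijection, the Cayley transform and the first-letter decomposition convert between $F$ and the column $[M_1,\ldots,M_n]^t$, the identity $I-\sum_i(I\otimes X_i)M_i=2[F+I]^{-1}$ yields \eqref{Th-rep}, and your norm verification via $(F^*+I)(F+I)-(F^*-I)(F-I)=4\,\text{\rm Re}\,F$ is algebraically the same as the paper's computation of $\text{\rm Re}\,F$ in terms of $(I-\Psi)^{-1}$. You also correctly flag the one technical point the paper handles explicitly, namely that all identities at $X=R$ must be obtained as limits of their counterparts at $X=rR$.
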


\begin{proof}
Assume that $\Theta\in H_{\text{\bf ball}} (B(\cE,\cY))$ and
$\|\Theta\|_2\leq 1$. Consider $\Phi:=\left[\begin{matrix}
\varphi_1\\
\vdots\\
\varphi_n\end{matrix}\right]$   in the noncommutative Schur class
$\cS_{\text{\bf
ball}}(B(\cD_{\Gamma_\theta},\cD_{\Gamma_\theta}^{(n)}))$. It is
easy to see  that $ \Phi\in  \cS_{\text{\bf
ball}}(B(\cD_{\Gamma_\theta},\cD_{\Gamma_\theta}^{(n)}))$ if and
only if the function
$$
\chi(X):=(I_\cE\otimes X_1)\varphi_1(X)+\cdots +(I_\cE\otimes
X_n)\varphi_n(X),\quad X:=(X_1,\ldots, X_n)\in [B(\cH)^n]_1,
$$
is in $\cS^0_{\text{\bf
ball}}(B(\cD_{\Gamma_\theta},\cD_{\Gamma_\theta} ))$. Since $$\left[
I+\sum_{i=1}^n (I_\cE\otimes X_i)\varphi_i(X)\right]
 \left[ I-\sum_{i=1}^n (I_\cE\otimes X_i)\varphi_i(X)\right]^{-1}$$
 is the noncommutative Cayley transform of $\chi$, it makes sense to
define $F$ by relation \eqref{F-def}. According to Theorem
\ref{parametriz}, $F\in H_{\bf ball}(B(\cE)$ has the properties
$F(0)=I$,  $\text{\rm Re}\, F\geq 0$, and
\begin{equation}
\label{Th-ine-Re} \Theta(rR_1,\ldots, rR_n)^*\Theta(rR_1,\ldots,
rR_n)\leq \text{\rm Re}\, F(rR_1,\ldots, rR_n) \quad \text{ for } \
r\in [0,1).
\end{equation}
Since $F\in {\bf H}_1^+(B(\cE))$, the noncommutative Cayley
transform of $F$, i.e.,
$\text{\boldmath{$\cC$}}[F]:=(F-I)(F+I)^{-1}$, is in ${\bf
H}_0^\infty(B(\cE))$ and $\|\text{\boldmath{$\cC$}}[F]\|_\infty\leq
1$. Consequently, there are some unique $M_1,\ldots, M_n\in H_{\bf
ball}(B(\cE))$ such that
$$\text{\boldmath{$\cC$}}[F](X)=(I_\cE\otimes X_1)M_1(X)+\cdots
+(I_\cE\otimes X_n)M_n(X),\quad X:=(X_1,\ldots, X_n)\in [B(\cH)]_1.
$$
Since  $\|\text{\boldmath{$\cC$}}[F]\|_\infty\leq 1$, we deduce that
$\left\|\sum_{i=1}^n (I_\cE\otimes rS_i)M_i(rS_1,\ldots,
rS_n)\right\|\leq 1$ for $r\in [0,1)$. Taking into account that
$S_1,\ldots, S_n$ are isometries with orthogonal ranges, we obtain
the inequality $$\sum_{i=1}^n r^2 M_i(rS_1,\ldots, rS_n)^*
M_i(rS_1,\ldots, rS_n)\leq I. $$ Hence, we deduce that
$$
\left\|\left[\begin{matrix}
M_1\\
\vdots\\
M_n\end{matrix}\right] \right\|_\infty=\lim_{r\to 1}
\left\|\left[\begin{matrix}
M_1(rS_1,\ldots, rS_n)\\
\vdots\\
M_n(rS_1,\ldots, rS_n)\end{matrix}\right] \right\|\leq 1,
$$
which shows that $\left[\begin{matrix}
M_1\\
\vdots\\
M_n\end{matrix}\right]$ in is the noncommutative   Schur class \,
$\cS_{\text{\bf ball}}(B(\cE,  \cE^{(n)}))$. Now,  consider the free
holomorphic function
\begin{equation}
\label{Psi-def} \Psi(X):= (I_\cE\otimes X_1)M_1(X)+\cdots
+(I_\cE\otimes X_n)M_n(X),\quad X:=(X_1,\ldots, X_n)\in [B(\cH)]_1,
\end{equation}
and  notice that
\begin{equation}
\label{I-P}
\begin{split}
 I-\Psi&=I-\text{\boldmath{$\cC$}}[F](X)\\
&=[(F+I)-(F-I)](F+I)^{-1}\\
&=2(F+I)^{-1}.
\end{split}
\end{equation}
Hence,  and  defining  $L$ by relation \eqref{L-def}, we deduce that
$L=\Theta (I- \Psi)$.   Consequently, $L\in H_{\bf
ball}(B(\cE,\cY))$ and
$$
\Theta(X)=L(X)\left[ I_{\cE\otimes \cH}- \sum_{i=1}^n(I_\cE\otimes
X_i)M_i(X)\right]^{-1}\quad  \text{ for
  } \ X:=(X_1,\ldots, X_n)\in [B(\cH)^n]_1.
  $$
Therefore, relation \eqref{Th-rep} holds. Now we show that $\left[\begin{matrix} L\\
M_1\\
\vdots\\
M_n\end{matrix}\right]$ is in the Schur class $\cS_{\text{\bf
ball}}(B(\cE,\cY\oplus \cE^{(n)}))$. Since $F$ is the inverse Cayley
transform of $\Psi$, i.e., $F=(I+\Psi)(I-\Psi)^{-1}$,  for any $r\in
[0,1)$, we have

\begin{equation*}
\begin{split}
\text{\rm Re}\,& F(rR_1, \ldots rR_n)\\
&= \frac{1}{2}\left[(I-\Psi(rR_1, \ldots rR_n)^*)^{-1}(I+\Psi(rR_1,
\ldots rR_n)^*+(I+\Psi(rR_1, \ldots rR_n))(I-\Psi(rR_1, \ldots
rR_n))^{-1}\right]\\
&=\frac{1}{2}(I-\Psi(rR_1, \ldots rR_n)^*)^{-1}\left[(I+\Psi(rR_1,
\ldots rR_n)^*)(I-\Psi(rR_1, \ldots
rR_n))\right.\\
&\qquad \qquad +\left.(I-\Psi(rR_1, \ldots rR_n)^*)(I+\Psi(rR_1,
\ldots rR_n))\right](I-\Psi(rR_1, \ldots
rR_n))^{-1}\\
&=(I-\Psi(rR_1, \ldots rR_n)^*)^{-1}[I-\Psi(rR_1, \ldots
rR_n)^*\Psi(rR_1, \ldots rR_n)](I-\Psi(rR_1, \ldots rR_n))^{-1}.
\end{split}
\end{equation*}
 Hence, and using relations \eqref{Th-rep}, \eqref{Th-ine-Re},  and \eqref{Psi-def},  we
obtain
\begin{equation*}
\begin{split}
 &(I-\Psi(rR_1,
\ldots rR_n)^*)^{-1} L(rR_1,\ldots, rR_n)^* L(rR_1,\ldots,
rR_n)(I-\Psi(rR_1, \ldots rR_n))^{-1}\\
&\qquad =\Theta(rR_1,\ldots, rR_n)^*\Theta(rR_1,\ldots, rR_n)\\
&\qquad \leq \text{\rm Re}\, F(rR_1,\ldots, rR_n)  \\
&\qquad=(I-\Psi(rR_1, \ldots rR_n)^*)^{-1}[(I-\Psi(rR_1, \ldots
rR_n)^*\Psi(rR_1, \ldots rR_n)](I-\Psi(rR_1, \ldots rR_n))^{-1}
\end{split}
\end{equation*}
for any $r\in [0,1)$. Consequently, we deduce that
$$
\Psi(rR_1, \ldots rR_n)^*\Psi(rR_1, \ldots rR_n)+L(rR_1,\ldots,
rR_n)^* L(rR_1,\ldots, rR_n)\leq I
$$
for any $r\in [0,1)$. Due to relation \eqref{Psi-def}, we have
$$
\Psi(rR_1, \ldots rR_n)^*\Psi(rR_1, \ldots rR_n)=\sum_{i=1}^n r^2
M_i(rR_1,\ldots, rR_n)^*M_i(rR_1,\ldots, rR_n)\quad \text{ for  }\
r\in [0,1).
$$
Combining these relations, we deduce that
\begin{equation}
\label{MiL} r^2\sum_{i=1}^n\|M_i(rR_1,\ldots,
rR_n)x\|^2+\|L(rR_1,\ldots, rR_n)x\|^2\leq \|x\|^2
\end{equation}
for any $x\in \cE\otimes F^2(H_n)$ and any $r\in [0,1)$.
 Remember that
$H_{\text{\rm ball}}^\infty (B(\cN,\cM))$   can be identified to the
operator
  space
$ B(\cN,\cM)\bar\otimes R_n^\infty$ for any Hilbert spaces $\cN$ and
$\cM$. In particular, since $L$ and $M_i$ are bounded free
holomorphic functions,  according to \cite{Po-pluriharmonic}, there
exist $\widetilde M_i \in B(\cE)\bar\otimes \cR_n^\infty$ and
$\widetilde L\in B(\cE,\cY)\bar\otimes\cR_n^\infty$ such that
\begin{equation*}
\begin{split}
\widetilde M_i&=\text{\rm SOT-}\lim_{r\to 1}M_i(rR_1,\ldots,
rR_n) \ \text{ and } \\
\widetilde L&=\text{\rm SOT-}\lim_{r\to 1}L(rR_1,\ldots, rR_n).
\end{split}
\end{equation*}
Passing to the limit in \eqref{MiL}, we obtain
$
\sum_{i=1}^n\|\widetilde M_i x\|^2+\|\widetilde Lx\|^2\leq \|x\|^2
$
for any $x\in \cE\otimes F^2(H_n)$. Therefore,
$\left[\begin{matrix} \widetilde L\\
\widetilde M_1\\
\vdots\\
\widetilde M_n \end{matrix}\right]$ is a contraction in
$B(\cE,\cY\oplus \cE^{(n)})\bar\otimes \cR_n^\infty$,
 which shows that
$\left[\begin{matrix} L\\
M_1\\
\vdots\\
M_n\end{matrix}\right]$ is in the noncommutative  Schur class
$\cS_{\text{\bf ball}}(B(\cE,\cY\oplus \cE^{(n)}))$.
We remark that $\left[\begin{matrix}
\varphi_1\\
\vdots\\
\varphi_n\end{matrix}\right]$ and $F$ uniquely determine each other
by Theorem \ref{parametriz}. On the other hand, $F$ and
 $
\left[\begin{matrix}
M_1\\
\vdots\\
M_n\end{matrix}\right]$ uniquely determine each other via the
noncommutative Cayley transform. Therefore, $J_\theta$ is a
one-to-one mapping. Now, let us prove  the surjectivity of  the
correspondence $J_\theta$.
Let $\left[\begin{matrix} L\\
M_1\\
\vdots\\
M_n\end{matrix}\right]$ be in the Schur class $\cS_{\text{\bf
ball}}(B(\cE,\cY\oplus \cE^{(n)}))$ such that relation
\eqref{Th-rep} holds. Therefore, we have
\begin{equation}
\label{L-sum}
 L(rR_1,\ldots, rR_n)^* L(rR_1,\ldots, rR_n)
+\sum_{i=1}^n M_i(rR_1,\ldots, rR_n)^* M_i(rR_1,\ldots, rR_n)\leq I.
\end{equation}
Hence $\left[\begin{matrix}
M_1\\
\vdots\\
M_n\end{matrix}\right]$ is in   $\cS_{\text{\bf ball}}(B(\cE,
\cE^{(n)}))$, which implies that  the free holomorphic function
$$\Psi(X):= (I_\cE\otimes X_1)M_1(X)+\cdots +(I_\cE\otimes
X_n)M_n(X),\quad X:=(X_1,\ldots, X_n)\in [B(\cH)]_1,
$$
is in $\cS^0_{\text{\bf ball}}(B(\cE))$. Let $F$ be the inverse
Cayley transform of $\Psi$. Then   $\text{\rm Re}\, F\geq 0$,
$F(0)=I$, and, due to  relations \eqref{Th-rep} and \eqref{L-sum},
we have
\begin{equation*}
\begin{split}
 &\Theta(rR_1,\ldots, rR_n)^*\Theta(rR_1,\ldots, rR_n)\\
&\qquad=(I-\Psi(rR_1, \ldots rR_n)^*)^{-1} L(rR_1,\ldots, rR_n)^*
L(rR_1,\ldots, rR_n)(I-\Psi(rR_1, \ldots rR_n))^{-1}\\
&\qquad\leq (I-\Psi(rR_1, \ldots rR_n)^*)^{-1}[(I-\Psi(rR_1, \ldots
rR_n)^*\Psi(rR_1, \ldots rR_n)](I-\Psi(rR_1, \ldots rR_n))^{-1}\\
&\qquad=\text{\rm Re}\, F(rR_1, \ldots rR_n)
\end{split}
\end{equation*}
for any $r\in [0,1)$. The latter equality was proved before, using
the fact that $F$ is the inverse Cayley transform of $\Psi$. In
particular, since $F(0)=I$, we  can use the inequality above to
deduce that
$$
\|\Theta(rR_1,\ldots, rR_n)(h\otimes 1)\|^2\leq \left<F(0)(h\otimes
1), h\otimes 1\right>=\|h\|^2
$$
for any $h\in\cE$ and $r\in [0,1)$.  Hence $\|\Theta\|_2\leq 1$.
Moreover,  since
$$\Theta(rR_1,\ldots, rR_n)^*\Theta(rR_1,\ldots,
rR_n)\leq \text{\rm Re}\, F(rR_1, \ldots rR_n),\qquad r\in [0,1),
$$
we can apply Theorem \ref{parametriz} to show that $F$ has the form
\eqref{F-def} for some $\left[\begin{matrix}
\varphi_1\\
\vdots\\
\varphi_n\end{matrix}\right]$ in $\cS_{\text{\bf
ball}}(B(\cD_{\Gamma_\theta},\cD_{\Gamma_\theta}^{(n)}))$. Now,
since $F$ is the inverse Cayley transform of  $\Psi$ we have
$\Psi=(F-I)(F+I)^{-1}$. Notice also that due to relations
\eqref{Th-rep} and \eqref{I-P}, we deduce that
$$L(X)=2\Theta(X)[F(X)+I]^{-1},\qquad X\in [B(\cH)^n]_1.
$$
Therefore, $ J_\Theta \left[\begin{matrix}
\varphi_1\\
\vdots\\
\varphi_n\end{matrix}\right]=\left[\begin{matrix} L\\
M_1\\
\vdots\\
M_n\end{matrix}\right] $. Finally, the representation \eqref{Th-rep}
is unique if and only if $D_{\Gamma_\theta}=0$, i.e.,
$\Gamma_\theta$ is an isometry. The proof is complete.
\end{proof}

 A closer look at the proof of Theorem \ref{one-to-one} reveals that
 we have also proved the following result.

\begin{corollary}
\label{caract-2} Let $\Theta:[B(\cH)^n]_1\to B(\cE,\cY)\otimes_{min}
B(\cH)$ be a   function. Then  $\Theta$ is  a free holomorphic
function in  $H_{\text{\bf ball}}^2 (B(\cE,\cY))$ with
$\|\Theta\|_2\leq 1$ if and only if it has a representation
\begin{equation*}
  \Theta(X)=L(X)\left[ I_{\cE\otimes \cH}-
\sum_{i=1}^n(I_\cE\otimes X_i)M_i(X)\right]^{-1}\quad  \text{ for
  } \ X:=(X_1,\ldots, X_n)\in [B(\cH)^n]_1,
\end{equation*}
 where
$\left[\begin{matrix} L\\
M_1\\
\vdots\\
M_n\end{matrix}\right]$   is  in the noncommutative   Schur class \,
$\cS_{\text{\bf ball}}(B(\cE,\cY\oplus \cE^{(n)}))$.
\end{corollary}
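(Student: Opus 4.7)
The plan is to extract both directions from material already established in Theorem \ref{one-to-one}, rather than redoing the work. The forward direction is the easier one, while the converse requires isolating the key estimate from the surjectivity portion of that proof.

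For the forward implication, assume $\Theta \in H^2_{\bf ball}(B(\cE,\cY))$ with $\|\Theta\|_2 \leq 1$. I would apply Theorem \ref{one-to-one} directly: the map $J_\Theta$ is defined on $\cS_{\bf ball}(B(\cD_{\Gamma_\theta},\cD_{\Gamma_\theta}^{(n)}))$, which is nonempty (e.g., the zero function lies in it). Taking any element of the domain and applying $J_\Theta$ produces a tuple $[L, M_1,\ldots, M_n]^t$ in $\cS_{\bf ball}(B(\cE, \cY \oplus \cE^{(n)}))$ satisfying the required representation \eqref{Th-rep}. So the representation exists.

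For the converse, suppose $\Theta$ admits a representation of the displayed form with $[L, M_1,\ldots, M_n]^t$ in $\cS_{\bf ball}(B(\cE, \cY\oplus \cE^{(n)}))$. The plan is to reuse the argument from the surjectivity half of Theorem \ref{one-to-one}. Concretely, I would set $\Psi(X):=\sum_{i=1}^n (I_\cE \otimes X_i) M_i(X)$ and observe that the Schur class hypothesis yields $\Psi^*\Psi + L^*L \leq I$, so in particular $\Psi \in \cS^0_{\bf ball}(B(\cE))$. Then let $F$ be the inverse noncommutative Cayley transform of $\Psi$, so that $F(0)=I$ and $\text{Re}\,F \geq 0$. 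The algebraic identity from the proof of Theorem \ref{one-to-one},
\begin{equation*}
\text{Re}\, F(rR) = (I-\Psi(rR)^*)^{-1}[I - \Psi(rR)^*\Psi(rR)](I-\Psi(rR))^{-1},
\end{equation*}
combined with $\Theta(rR) = L(rR)(I-\Psi(rR))^{-1}$ and the inequality $L^*L + \Psi^*\Psi \leq I$ evaluated at $(rR_1,\ldots, rR_n)$, gives
\begin{equation*}
\Theta(rR_1,\ldots, rR_n)^*\Theta(rR_1,\ldots, rR_n) \leq \text{Re}\, F(rR_1,\ldots, rR_n).
\end{equation*}

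Finally, to deduce $\|\Theta\|_2 \leq 1$ as a conclusion (so that, in particular, $\Theta$ lies in $H^2_{\bf ball}$), I would pair the last inequality against a vector $h \otimes 1$ with $h \in \cE$ and use that the free pluriharmonic function $\text{Re}\, F$ satisfies the mean value property $\widetilde{\tau}[\text{Re}\,F(rR)] = F(0) = I$. Concretely, $\langle \Theta(rR)(h\otimes 1), \Theta(rR)(h\otimes 1)\rangle \leq \langle F(0)(h\otimes 1), h\otimes 1\rangle = \|h\|^2$, which gives $\sum_{\alpha} r^{2|\alpha|} \|A_{(\alpha)} h\|^2 \leq \|h\|^2$, and letting $r \to 1$ yields $\sum_{\alpha} A_{(\alpha)}^* A_{(\alpha)} \leq I_\cE$, i.e., $\|\Theta\|_2 \leq 1$. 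The main obstacle is really just bookkeeping: making sure the identification of $\Theta$ with the stated representation is consistent with it being a genuine element of $H^2_{\bf ball}$ (and not just a formal object), which is handled automatically once $F(0)=I$ is established and the pointwise inequality above is combined with the Poisson/tau evaluation at the origin.
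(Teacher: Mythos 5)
Your proof is correct and takes essentially the same approach as the paper, which obtains this corollary by noting that both directions are already contained in the proof of Theorem \ref{one-to-one}: the forward implication from applying $J_\Theta$ to any element of the (nonempty) Schur class $\cS_{\text{\bf ball}}(B(\cD_{\Gamma_\theta},\cD_{\Gamma_\theta}^{(n)}))$, and the converse from the surjectivity argument, where $\Psi:=\sum_i (I_\cE\otimes X_i)M_i(X)$, its inverse Cayley transform $F$ with $F(0)=I$, and the inequality $\Theta(rR)^*\Theta(rR)\leq \text{\rm Re}\, F(rR)$ evaluated against $h\otimes 1$ give $\|\Theta\|_2\leq 1$. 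Your extraction of that key estimate matches the paper's argument step for step.
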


\bigskip

 \section{  All solutions  of the generalized  noncommutative commutant
 lifting problem   }

We introduce   a generalized noncommutative  commutant lifting ({\bf
GNCL})  problem, which extends to a multivariable setting  several
lifting problems including the classical Sz.-Nagy--Foia\c s
commutant lifting and the extensions obtained by Treil-Volberg,
Foia\c s-Frazho-Kaashoek, and Biswas-Foia\c s-Frazho, as well as
  their multivariable noncommutative versions.

We  solve the {\bf GNCL}  problem  and, using the results regarding
sub-pluriharmonic functions
 and  free pluriharmonic majorants on noncommutative balls,
 we provide a complete description of all solutions. In particular,
 we obtain   a concrete  Schur type  description of all solutions in  the noncommutative commutant lifting
 theorem.

An $n$-tuple  $T:=(T_1,\dots, T_n)$ of bounded linear  operators
acting on a common Hilbert space $\cH$
   is called
contractive (or row contraction) if
$$
T_1T_1^*+\cdots +T_nT_n^*\leq I_\cH.
$$
The defect operators associated with $T$  are
$$
\Delta_{T^*}:=\left(I_\cH-\sum_{i=1}^n T_iT_i^*\right)^{1/2}\in
B(\cH)\quad \text{ and }\quad
\Delta_{T}:=\left(\left[\delta_{ij}I_\cH-T_i^*T_j\right]_{n\times
n}\right)^{1/2}\in B(\cH^{(n)}),
$$
while the defect spaces  of $T$ are
$\cD_*=\cD_{T^*}:=\overline{\Delta_{T^*}\cH}$ and
$\cD=\cD_{T}:=\overline{\Delta_{T}\cH^{(n)}}$, where
$\cH^{(n)}:=\oplus_{i=1}^n \cH$ denotes the direct sum of $n$ copies
of $\cH$.
We say that an $n$-tuple  $V:=(V_1,\dots, V_n)$ of isometries on a
Hilbert space $\cK\supset \cH$ is a  minimal isometric dilation of
$T$ if the following properties are satisfied:
\begin{enumerate}
\item[(i)]  $V_1V_1^*+\cdots +V_nV_n^*\le I_\cK;$
\item[(ii)]  $V_i^*|_\cH=T_i^*, \ i=1,\dots,n;$
\item[(iii)] $\cK=\bigvee_{\alpha\in \FF^+_n} V_\alpha \cH.$
\end{enumerate}
The isometric dilation theorem for row contractions
\cite{Po-isometric}
 asserts that
  every  row contraction $T$ has a minimal isometric
  dilation $V$, which is uniquely
determined up to an isomorphism.
 Let $\Delta_i:\cH\to \cD \otimes F^2(H_n)$ be defined by
$$
\Delta_i h:=  [\Delta_{T}(
 \underbrace{0,\ldots, 0,}_{\text{$i-1$}\ times}
 h, 0,\dots,0)\otimes 1]\oplus 0\oplus 0\cdots.
$$
Consider the Hilbert space $\cK:=\cH\oplus (\cD \otimes F^2(H_n))$
and embed $\cH$ and  $\cD$ in $\cK$ in the natural way. For each
$i=1,\ldots, n$,  define the operator $V_i:\cK\to\cK$ by
\begin{equation}
\label{iso-dil}
 V_i(h\oplus (\xi\otimes d)):= T_ih \oplus [\Delta_i
h +( I_{\cD}\otimes S_i)(\xi\otimes d)]
\end{equation}
for any $h\in \cH$, \, $\xi\in F^2(H_n)$ , \,$d\in \cD$, where
$S_1,\ldots, S_n$ are the left creation operators on the full Fock
space $F^2(H_n)$. The $n$-tuple  $V:=(V_1,\ldots, V_n)$  is a
realization of the minimal isometric dilation of $T$. Note that
\begin{equation}\label{dilmatr}
V_i=\left[\begin{matrix} T_i& 0\\
\Delta_i&  I_\cD\otimes S_i
\end{matrix}\right]
\end{equation}
with respect to the decomposition $\cK=\cH\oplus [\cD \otimes
F^2(H_n)]$.

Let us introduce our   {\it generalized  noncommutative commutant
lifting} ({\bf GNCL}) problem.

A lifting {\it data set} $\{A,T,V,C,Q\}$  for the  {\bf GNCL}
problem is defined as follows.
  Let $T:=(T_1,\dots, T_n)$, $T_i\in B(\cH)$, be a row
contraction and let $V:=(V_1,\ldots, V_n)$, $V_i\in B(\cK)$, be the
minimal isometric dilation of  $T$ on a Hilbert space $\cK\supset
\cH$ given by \eqref{dilmatr}.
  Let $Q:=(Q_1,\ldots, Q_n)$,  $Q_i\in B(\cG_i,\cX)$,  and
 $C:=(C_1,\ldots, C_n)$, $C_i\in
B(\cG_i,\cX)$,  be such  that
\begin{equation}
\label{C-Q}
 \left[\delta_{ij} C_i^* C_j\right]_{n\times n}\leq \left[ Q_i^*
 Q_j\right]_{n\times n}.
 \end{equation}
Let $A\in B(\cX,\cH)$ be a contraction such that
\begin{equation}
\label{intert-rel} T_i AC_i=AQ_i, \qquad i=1,\ldots,n.
\end{equation}
We say that   $B$ is a {\it contractive interpolant} for $A$ with
respect to $\{A,T,V,C,Q\}$ if  $B\in B(\cX, \cK)$  is a contraction
satisfying the conditions
\begin{equation*}
P_\cH B=A\quad \text{ and } \
 V_iBC_i=BQ_i,\quad i=1,\ldots,n,
\end{equation*}
where $P_\cH$ is the orthogonal projection from $\cK$ onto $\cH$.

\smallskip

{\bf The GNCL problem is  to find   contractive interpolats $B$  of
$A$ with respect to the data set $\{A,T,V,C,Q\}$.}

 \smallskip

  Note that $B$ satisfies  relation $P_\cH B=A$  if and only if
   it  has a matrix decomposition
  \begin{equation}\label{decomp}
  B=\left[\begin{matrix}A\\
  \Gamma D_{A}
  \end{matrix}
  \right]: \cX\to \cH\oplus [\cD\otimes F^2(H_n)],
  \end{equation}
 where $\Gamma:\cD_{A}\to\cD\otimes F^2(H_n)$ is a contraction.
 Here $D_A:=(I_\cX-A^*A)^{{1/2}}$ and $\cD_A:=\overline{D_A\cX}$.
 We mention that $B$ and $\Gamma$ determine each other uniquely.
Note that $B$ satisfies the equations  $V_iBC_i=BQ_i$ for $
i=1,\ldots,n$, if and only if
$$
\left[\begin{matrix} T_i& 0\\
\Delta_i&  I_\cD\otimes  S_i
\end{matrix}\right] \left[\begin{matrix}A\\
  \Gamma D_{A}
  \end{matrix}
  \right] C_i=\left[\begin{matrix}A\\
  \Gamma D_{A}
  \end{matrix}
  \right] Q_i,\quad i=1,\ldots, n,
  $$
  which, due to relation \eqref{intert-rel}, is equivalent to
  \begin{equation}
  \label{DSGA}
\Delta_iAC_i+( I_\cD\otimes S_i) \Gamma D_A C_i=\Gamma D_A Q_i,
\quad i=1,\ldots, n.
  \end{equation}
Therefore, the {\bf GNCL} problem  is equivalent to finding
contractions $\Gamma :\cD_A\to \cD\otimes F^2(H_n)$ such that
relation  \eqref{DSGA} holds. Using relations \eqref{C-Q} and
\eqref{intert-rel}, we deduce that
\begin{equation*}
\begin{split}
\left\|\sum_{i=1}^n D_A Q_i y_i\right\|^2 &= \left\|\sum_{i=1}^n Q_i
 y_i\right\|^2-\left\|\sum_{i=1}^n A Q_i y_i\right\|^2\\
 &\geq
 \sum_{i=1}^n \left\|C_i y_i\right\|^2-\left\|\sum_{i=1}^n T_iA Q_i y_i\right\|^2\\
&= \sum_{i=1}^n \left\|AC_i y_i\right\|^2 -\left\|\sum_{i=1}^n T_iA
Q_i y_i\right\|^2 +\sum_{i=1}^n \left\|C_i y_i\right\|^2-
\left\|\sum_{i=1}^n A C_i y_i\right\|^2\\
&= \left\| \Delta_T\left(\oplus_{i=1}^n AC_iy_i\right)\right\|^2+
\sum_{i=1}^n \left\|D_AC_iy_i\right\|^2
\end{split}
\end{equation*}
for any $y_i\in \cG_i$, $i=1,\ldots,n$. Consider the subspace
$\cF\subset \cD_A$  given by
\begin{equation}
\label{F}
 \cF:=\left\{ \sum_{i=1}^n D_A Q_i y_i:\ y_i\in \cG_i,  \
i=1,\ldots, n\right\}^{-}.
\end{equation}
Due to the estimations above, we can  introduce the operator
$\Omega:\cF\to \cD\oplus
\cD_A^{(n)}$    by  $\Omega:=\left[\begin{matrix} \Omega_1\\
\Omega_2\end{matrix} \right]$, where   $\Omega_1$ and  $\Omega_2$
are defined  as follows:

\begin{equation}
\label{OmOm}
\begin{split}
  \Omega_1&:\cF\to \cD,\qquad \qquad \quad \Omega_1\left(\sum_{i=1}^n D_A
Q_i y_i\right):=\Delta_T\left(\oplus_{i=1}^n AC_iy_i\right) \text{ and }\\
\Omega_2&:\cF\to\oplus_{i=1}^n \cD_A,\qquad
\Omega_2\left(\sum_{i=1}^n D_A Q_i y_i\right):=\oplus_{i=1}^n
D_AC_iy_i.
\end{split}
\end{equation}
Consequently,  $\Omega$ is a contraction. We remark  that $\Omega$
is an isometry if and only if we have equality in \eqref{C-Q}. Now,
notice that
$$
\sum_{i=1}^n \Delta_i(AC_iy_i)=\Delta_T(\oplus_{i=1}^n
AC_iy_i)\otimes 1,\qquad y_i\in \cG_i.
$$
It is clear that relation \eqref{DSGA} is equivalent to
\begin{equation}
\label{another-eq} \Delta_T(\oplus_{i=1}^n AC_iy_i)\otimes
1+\sum_{i=1}^n( I_\cD\otimes S_i) \Gamma D_A C_i
y_i=\sum_{i=1}^n\Gamma D_A Q_i y_i,\qquad y_i\in \cG_i,
\end{equation}
which is equivalent to
$$
\Omega_1\left(\sum_{i=1}^n D_A Q_i y_i\right)\otimes 1 + \left[
I_\cD\otimes S_1,\ldots, I_\cD\otimes S_n\right](\oplus_{i=1}^n
\Gamma)\Omega_2\left(\sum_{i=1}^n D_A Q_i y_i\right)
=\Gamma\left(\sum_{i=1}^n D_A Q_i y_i\right).
$$

Therefore, we have
\begin{equation}
\label{ooga} E_\cD\Omega_1+\left[ I_\cD\otimes S_1,\ldots,
I_\cD\otimes
S_n\right] \left[\begin{matrix} \Gamma P_1\\
\vdots\\
\Gamma P_n\end{matrix} \right] \Omega_2=\Gamma|_\cF,
\end{equation}
where $E_\cD:\cD\to \cD\otimes F^2(H_n)$  is defined by $E_\cD
y=y\otimes 1$ and $P_j: \cD_A^{(n)}\to \cD_A$ is the orthogonal
projection of $ \cD_A^{(n)}$ onto its  $j$-th coordinate.

Let $B$ be a fixed solution of the {\bf GNCL} problem  and let
$\Gamma:\cD_A\to \cD\otimes F^2(H_n)$ be the unique contraction
determined by  $B$  (see \eqref{decomp}). Since relation
\eqref{DSGA} holds and $S_1,\ldots, S_n$ are isometries with
orthogonal ranges, we deduce that
\begin{equation*}
\begin{split}
\left\|\sum_{i=1}^n D_\Gamma D_A Q_iy_i\right\|^2&=
\left\|\sum_{i=1}^n D_A Q_iy_i\right\|^2-\left\|\sum_{i=1}^n \Gamma
D_A Q_iy_i\right\|^2\\
&= \left\|\sum_{i=1}^n D_A Q_iy_i\right\|^2 -\left\|\sum_{i=1}^n
\Delta_i A C_iy_i\right\|^2- \left\|\sum_{i=1}^n (I_\cD\otimes
S_i)\Gamma D_A C_iy_i\right\|^2\\
&= \left\|\sum_{i=1}^n D_A Q_iy_i\right\|^2 - \left\|
\Delta_T\left(\oplus_{i=1}^n AC_iy_i\right)\right\|^2 -
\sum_{i=1}^n \left\| \Gamma D_A C_iy_i\right\|^2\\
&= \sum_{i=1}^n\left\| D_\Gamma D_A C_iy_i\right\|^2
+\left\|\sum_{i=1}^n D_A Q_iy_i\right\|^2- \left\|
\Delta_T\left(\oplus_{i=1}^n AC_iy_i\right)\right\|^2-
\sum_{i=1}^n \left\| D_A C_iy_i\right\|^2\\
&= \sum_{i=1}^n\left\| D_\Gamma D_A C_iy_i\right\|^2
+\left\|\sum_{i=1}^n D_A Q_iy_i\right\|^2-\left\|\Omega \left(
\sum_{i=1}^n D_A Q_i y_i\right)\right\|^2\\
&=\sum_{i=1}^n\left\| D_\Gamma D_A C_iy_i\right\|^2 +
\left\|D_\Omega D_A \left( \sum_{i=1}^n  Q_i y_i\right)\right\|^2\\
&\geq \sum_{i=1}^n\left\| D_\Gamma D_A C_iy_i\right\|^2
\end{split}
\end{equation*}
for any $y_i\in \cG_i$, $ i=1,\ldots,n$.  Therefore,
$$
\left\|\sum_{i=1}^n D_\Gamma D_A Q_iy_i\right\|\geq
\left(\sum_{i=1}^n \left\|D_\Gamma D_A
C_iy_i\right\|^2\right)^{1/2},\quad y_i\in \cG_i,
$$
where the equality  holds if and only if $\Omega$ is an isometry.
Consequently, we can define a contraction $\Lambda:
\cF_\Gamma:=\overline{\cD_\Gamma\cF}\to  \cD_\Gamma^{(n)}$ by
setting
\begin{equation}
\label{Lambd} \Lambda\left(\sum_{i=1}^n D_\Gamma D_A
Q_iy_i\right):=\oplus_{i=1}^n (D_\Gamma D_A C_iy_i),\qquad y_i\in
\cG_i.
\end{equation}
Using the definition of $\Omega_2$, we deduce that
\begin{equation}
\label{Lambd2} \Lambda D_\Gamma x=\left(\oplus_{i=1}^n
D_\Gamma\right) \Omega_2x, \qquad x\in \cF.
\end{equation}
We remark that $\Lambda$ is an isometry if and only if $\Omega$ is
an isometry.

We introduce the  noncommutative Schur class $\cS_{\bf
ball}^{\Lambda}(B(\cD_\Gamma, \cD_\Gamma^{(n)}))$  of all bounded
free holomorphic functions $\Phi\in H_{\bf
ball}^\infty(B(\cD_\Gamma, \cD_\Gamma^{(n)}))$ with
$\|\Phi\|_\infty\leq 1$ such that $C|_{\cF_\Gamma}=\Lambda$. More
precisely, if $C$ has the representation $\Phi(X_1,\ldots,
X_n)=\sum_{\alpha\in \FF_n^+} C_{(\alpha)}\otimes X_\alpha$ for some
coefficients
 $C_{(\alpha)}\in B(\cD_\Gamma, \cD_\Gamma^{(n)})$, the
 latter condition means  $C_{(0)}|_{\cF_\Gamma}=\Lambda$ and
 $C_{\alpha)}|_{\cF_\Gamma}=0$  if $|\alpha|\geq 1$.
Equivalently,
 $\Phi(rR_1,\ldots, rR_n)|_{\cF_\Gamma\otimes 1}=
 (\Lambda\otimes I_{F^2(H_n)})|_{\cF_\Gamma\otimes 1}
 $,
 i.e.,
 \begin{equation}
 \label{cond-Phi}
 \Phi(rR_1,\ldots, rR_n)(x\otimes 1)=\Lambda x\otimes 1
 \end{equation}
  for any $x\in \cF_\Gamma$ and $r\in [0,1)$. Moreover, notice  also that the
  latter condition is equivalent to $\widetilde
  \Phi|_{\cF_\Gamma\otimes F^2(H_n)}=\Lambda \otimes I_{F^2(H_n)}$,
  where $\widetilde \Phi$ is the boundary function of $\Phi$.

We remark  that  the set $\cS_{\bf ball}^{\Lambda}(\cD_\Gamma,
\cD_\Gamma^{(n)})$ is nonempty. Indeed, we can take $C=\Lambda
P_{\cF_\Gamma}\otimes I$, where $P_{\cF_\Gamma}$ is the orthogonal
projection of $\cD_\Gamma$ onto $\cF_\Gamma$.

We say that a bounded free holomorphic function $\Psi\in H_{\bf
ball}^\infty(B(\cD_A, \cD\oplus  \cD_A^{(n)}))$ is a {\it Schur
function associated with the data set} $\{A,T,V,C,Q\}$  if
$\|\Psi\|_\infty\leq 1$ such that $\Psi|_{\cF}=\Omega$.
Equivalently,
 \begin{equation}
\label{Schur-fun}
 \|\Psi(rR_1,\ldots, rR_n)\|\leq 1\quad  \text{  and  }\quad
 \Psi(rR_1,\ldots, rR_n)(y\otimes 1)=\Omega y \otimes 1
 \end{equation}
  for any $r\in [0,1)$ and $y\in \cF$.
We denote by $\cS_{\bf ball}^{\Omega}(B(\cD_A, \cD\oplus
\cD_A^{(n)}))$ the set of all Schur functions associated with the
data set $\{A,T,V,C,Q\}$.

Let $B$ be a solution of the {\bf GNCL} problem with the data set
$\{A,T,V,C,Q\}$ and consider the contraction $\Gamma:\cD_A\to
\cD\otimes F^2(H_n)$ uniquely  determined by $B$ (see
\eqref{decomp}). Let $\Theta \in H^2_{\bf ball} (B(\cD_A, \cD))$ be
the free holomorphic function   with  symbol $\Gamma$. Define the
map
$$
J_\Gamma: \cS_{\text{\bf ball}}(B(\cD_{\Gamma}, \cD_{\Gamma}^{(n)}
))\to \cS_{\text{\bf ball}}(B(\cD_A,\cD\oplus \cD_A^{(n)}))
$$
  by setting
\begin{equation}
\label{J-ga}
 J_\Gamma \left[\begin{matrix}
\varphi_1\\
\vdots\\
\varphi_n\end{matrix}\right]:=\left[\begin{matrix} L\\
M_1\\
\vdots\\
M_n\end{matrix}\right],
\end{equation}
where $L\in H_{\text{\bf ball}} (B(\cD_A,\cD))$ is  given  by
\begin{equation}
\label{L-def2}
 L(Z):=2\Theta(Z)[F(Z)+I]^{-1}, \quad Z\in [B(\cZ)^n]_1,
\end{equation}
 the free holomorphic function $F\in H_{\text{\bf ball}} (B(\cD_A ))$ is  defined  by
 \begin{equation}\label{F-def2}
 \begin{split}
 F(Z)&:= (\Gamma^*\otimes I_\cZ)
\left(I_{\cD\otimes F^2(H_n)\otimes \cZ}+ \sum_{i=1}^n I_\cD\otimes
S_i^*\otimes Z_i\right) \\
&\qquad \times \left(I_{\cD\otimes F^2(H_n)\otimes \cZ}-
\sum_{i=1}^n I_\cD\otimes
S_i^*\otimes Z_i\right)^{-1}(\Gamma\otimes I_\cZ)\\
 &\qquad +(D_{\Gamma}\otimes I_\cZ)
 \left[ I+\sum_{i=1}^n (I_{\cD_A}\otimes Z_i)\varphi_i(Z)\right]
 \left[ I-\sum_{i=1}^n (I_{\cD_A}\otimes Z_i)\varphi_i(Z)\right]^{-1}(D_{\Gamma}\otimes
 I_\cZ),
 \end{split}
 \end{equation}
and $M_1,\ldots, M_n\in H_{\text{\bf ball}} (B(\cD_A ))$ are
uniquely determined by the equation
\begin{equation}
\label{M-def2}(I_{\cD_A}\otimes Z_1)M_1(Z)+\cdots +(I_{\cD_A}\otimes
Z_n)M_n(Z)= [F(Z)-I][F(Z)+ I]^{-1}
\end{equation}
for any   $Z:=(Z_1,\ldots, Z_n)\in [B(\cZ)^n]_1$. Here $\cZ$ is an
infinite dimensional Hilbert space.

In what follows we need  the following   lemma, which  is the core
of the main result of this section.

\begin{lemma}
\label{main-ingredient} Let  $\{A,T,V,C,Q\}$ be a  data set for the
{\bf GNCL} problem. Let $\Gamma:\cD_{A}\to \cD\otimes F^2(H_n)$ be a
contraction and let $\Phi:=\left[\begin{matrix}
\varphi_1\\
\vdots\\
\varphi_n\end{matrix}\right]$ be in $\cS_{\text{\bf
ball}}(B(\cD_{\Gamma}, \cD_{\Gamma}^{(n)} ))$. Define
$
\Psi:=\left[\begin{matrix} L\\
M_1\\
\vdots\\
M_n\end{matrix}\right]$ by relations \eqref{L-def2}, \eqref{F-def2},
and  \eqref{M-def2}. Then the following statements hold:
\begin{enumerate}
\item[(i)] If  $\Gamma$ satisfies  relation \eqref{DSGA}, then the
free holomorphic function $M:=\left[\begin{matrix}
M_1\\
\vdots\\
M_n\end{matrix}\right]$ has the property that $M|_\cF=\Omega_2$ if
and only if $\Phi\in\cS^\Lambda_{\text{\bf ball}}(B(\cD_{\Gamma},
\cD_{\Gamma}^{(n)} ))$;

\item[(ii)]
$\Psi$  is in $\cS_{\bf ball}^{\Omega}(B(\cD_A, \cD\oplus
\cD_A^{(n)}))$ if and only if \ $\Gamma$ satisfies \eqref{DSGA} and
$\Phi$ is in $\cS_{\bf ball}^{\Lambda}(B(\cD_\Gamma,
\cD_\Gamma^{(n)}))$.
\end{enumerate}
\end{lemma}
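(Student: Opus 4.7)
The strategy is to work through the auxiliary free holomorphic functions
$\Psi_{\text{sum}}(X) := \sum_i (I_{\cD_A}\otimes X_i)M_i(X)$ and $\Phi_{\text{sum}}(X) := \sum_i (I_{\cD_A}\otimes X_i)\varphi_i(X)$, which are linked by $\Psi_{\text{sum}} = (F-I)(F+I)^{-1}$ via \eqref{M-def2} and by $F = W + (D_\Gamma\otimes I)(I+\Phi_{\text{sum}})(I-\Phi_{\text{sum}})^{-1}(D_\Gamma\otimes I)$ via \eqref{F-def2}. Since the $R_i$ have mutually orthogonal ranges, the conditions $M|_\cF = \Omega_2$ and $\Phi\in\cS^{\Lambda}_{\bf ball}$ translate, for every $y = \sum_i D_A Q_i y_i\in\cF$ and $r\in[0,1)$, into the two identities
$$\Psi_{\text{sum}}(rR)(y\otimes 1) = r\textstyle\sum_i D_A C_i y_i\otimes e_i \quad\text{and}\quad \Phi_{\text{sum}}(rR)(D_\Gamma y\otimes 1) = r\textstyle\sum_i D_\Gamma D_A C_i y_i\otimes e_i.$$

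For part (i), the main step is to verify that, under \eqref{DSGA}, these two displayed identities are equivalent. I would set $v := y\otimes 1 - r\sum_i D_A C_i y_i\otimes e_i$, so the first identity is equivalent to $(F+I)(rR)v = 2\,y\otimes 1$, and split $(F+I)v = Wv + (D_\Gamma\otimes I)K(D_\Gamma\otimes I)v + v$ with $K := (I+\Phi_{\text{sum}})(I-\Phi_{\text{sum}})^{-1}$. Using the alternative form $W = 2(\Gamma^*\otimes I)\Phi_{nc}(\Gamma\otimes I) - \Gamma^*\Gamma\otimes I$ with $\Phi_{nc} := (I - \sum I\otimes S_i^*\otimes X_i)^{-1}$, the decisive observation is that summing \eqref{DSGA} over $i$ produces the decomposition
$$(\Gamma\otimes I)v = (\Omega_1 y)\otimes 1\otimes 1 + (I-\Sigma_r)\Bigl[\textstyle\sum_j\bigl((I\otimes S_j)\Gamma D_A C_j y_j\bigr)\otimes 1\Bigr], \quad \Sigma_r := \textstyle\sum_i I\otimes S_i^*\otimes rR_i;$$
since $S_i^*\,1 = 0$ annihilates the first summand, applying $\Phi_{nc}$ telescopes to $\Phi_{nc}(\Gamma\otimes I)v = \Gamma y\otimes 1$, giving $W(rR)v = \Gamma^*\Gamma y\otimes 1 + r\sum_i\Gamma^*\Gamma D_A C_i y_i\otimes e_i$. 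For the Cayley term, the $\Phi_{\text{sum}}$ identity is precisely $(D_\Gamma\otimes I)v = (I-\Phi_{\text{sum}}(rR))(D_\Gamma y\otimes 1)$, whence $K(rR)(D_\Gamma\otimes I)v = (I+\Phi_{\text{sum}}(rR))(D_\Gamma y\otimes 1) = D_\Gamma y\otimes 1 + r\sum_i D_\Gamma D_A C_i y_i\otimes e_i$. Adding the three contributions and using $\Gamma^*\Gamma + D_\Gamma^2 = I$ yields $(F+I)(rR)v = 2\,y\otimes 1$. Every link is an \emph{iff}, so the chain reverses: given the $\Psi_{\text{sum}}$ identity and \eqref{DSGA}, the $W$-piece is computed unconditionally and the required value of the Cayley piece pins down $\Phi_{\text{sum}}(rR)(D_\Gamma y\otimes 1)$ uniquely.

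For part (ii), Theorem \ref{one-to-one} already supplies $\|\Psi\|_\infty\leq 1$ (since $\|\Gamma\|\leq 1$), so $\Psi\in\cS^{\Omega}_{\bf ball}$ reduces to $L|_\cF = \Omega_1$ together with $M|_\cF = \Omega_2$. Writing $L = \Theta(I-\Psi_{\text{sum}})$ and using the intertwining $\Theta(rR)(\xi\otimes e_i) = (I_\cD\otimes S_i)\Theta(rR)(\xi\otimes 1)$, the identity $L(rR)(y\otimes 1) = \Omega_1 y\otimes 1$ (under $M|_\cF = \Omega_2$) becomes
$$\Theta(rR)(y\otimes 1) - r\textstyle\sum_i(I_\cD\otimes S_i)\Theta(rR)(D_A C_i y_i\otimes 1) = \Omega_1 y\otimes 1;$$
specializing to $y = D_A Q_i y_i$ and comparing Fock coefficients of both sides, this system on the power-series coefficients of $\Theta$ is exactly the Fock-basis reading of \eqref{DSGA}. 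Combined with part (i), this handles both implications of (ii). The main obstacle is the bookkeeping of the three tensor factors present in $W(rR)$ (the coefficient space, the inner Fock space on which $S_j^*$ acts, and the outer Fock space on which $R_i$ acts); the identity above for $(\Gamma\otimes I)v$, together with $S_i^*\,1 = 0$, is the telescoping trick that collapses $\Phi_{nc}$ to a single term and makes the whole computation unfold cleanly.
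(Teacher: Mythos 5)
Your proposal is correct and follows essentially the same route as the paper: both reduce $M|_\cF=\Omega_2$ and $\Phi\in\cS^\Lambda_{\bf ball}$ to radial identities on $\cF\otimes 1$, both exploit \eqref{DSGA} together with $S_i^*1=0$ to collapse the noncommutative Cauchy kernel applied to $(\Gamma\otimes I)v$, both use injectivity of $D_\Gamma\otimes I$ on $\cD_\Gamma\otimes F^2(H_n)$ and the orthogonal ranges of the $R_i$ to reverse the implications, and both treat $L$ via $L=\Theta(I-\Psi_{\mathrm{sum}})$, the intertwining $\Theta(rR)(\xi\otimes e_i)=(I\otimes S_i)\Theta(rR)(\xi\otimes 1)$, and a limit/coefficient comparison to recover \eqref{DSGA}. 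Your bookkeeping via $v$ and $(F+I)(rR)v=2\,y\otimes 1$ is only a cosmetic repackaging of the paper's computation of $[F+I]^{-1}[F-I](x\otimes 1)$ and the vanishing of $\chi_r$.
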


\begin{proof}
Let $\Theta \in H^2_{\bf ball} (B(\cD_A, \cD))$
 be  the free holomorphic function
  with symbol
$\Gamma$. Due to relation \eqref{F-def2}, we have
\begin{equation*}
\begin{split}
F(Z) &=\Gamma^* \Gamma\otimes I_\cZ + 2(\Gamma^*\otimes I_\cZ)
\left(I_{\cD\otimes F^2(H_n)\otimes \cZ}- \sum_{i=1}^n I_\cD\otimes
S_i^*\otimes Z_i\right)^{-1}\left(\sum_{i=1}^n I_\cD\otimes
S_i^*\otimes Z_i\right)
(\Gamma\otimes I_\cZ)\\
&+ D_\Gamma^2\otimes I_\cZ+2(D_\Gamma\otimes I_\cZ) \left[
I-\sum_{i=1}^n (I_{\cD_A}\otimes Z_i)\varphi_i(Z)\right]^{-1}\left[
\sum_{i=1}^n (I_{\cD_A}\otimes
Z_i)\varphi_i(Z)\right](D_{\Gamma}\otimes
 I_\cZ)
\end{split}
\end{equation*}
for any $Z:=(Z_1,\ldots, Z_n)\in [B(\cZ)^n]_1$.
  Since
$\Gamma^* \Gamma +D_\Gamma^2=I$, we deduce that
\begin{equation}\label{F-I}
\begin{split}
F(Z)-I&=2(\Gamma^*\otimes I_\cZ) \left(I_{\cD\otimes F^2(H_n)\otimes
\cZ}- \sum_{i=1}^n I_\cD\otimes S_i^*\otimes
Z_i\right)^{-1}\left(\sum_{i=1}^n I_\cD\otimes
S_i^*\otimes Z_i\right)(\Gamma\otimes I_\cH)\\
&+  2(D_\Gamma\otimes I_\cH) \left[ I-\sum_{i=1}^n (I_{\cD_A}\otimes
Z_i)\varphi_i(Z)\right]^{-1}\left[ \sum_{i=1}^n (I_{\cD_A}\otimes
Z_i)\varphi_i(Z)\right](D_{\Gamma}\otimes
 I_\cZ)
\end{split}
\end{equation}
for any $Z:=(Z_1,\ldots, Z_n)\in [B(\cZ)^n]_1$.
 Similarly, we obtain
\begin{equation}\label{F+I}
\begin{split}
F(Z)+I&=2(\Gamma^*\otimes I_\cZ)  \left(I_{\cD\otimes
F^2(H_n)\otimes \cZ}- \sum_{i=1}^n I_\cD\otimes
S_i^*\otimes Z_i\right)^{-1}(\Gamma\otimes I_\cZ)\\
&+  2(D_\Gamma\otimes I_\cZ) \left[ I-\sum_{i=1}^n (I_{\cD_A}\otimes
Z_i)\varphi_i(Z)\right]^{-1} (D_{\Gamma}\otimes
 I_\cZ)
\end{split}
\end{equation}
for any $Z:=(Z_1,\ldots, Z_n)\in [B(\cZ)^n]_1$.

Now, assume that $\Gamma$ satisfies relation \eqref{ooga} (which is
equivalent to \eqref{DSGA}). Let us show that
$M:=\left[\begin{matrix}
M_1\\
\vdots\\
M_n\end{matrix}\right]$ has the property that $M|_\cF=\Omega_2$ if
and only if $\Phi\in\cS^\Lambda_{\text{\bf ball}}(B(\cD_{\Gamma},
\cD_{\Gamma}^{(n)} ))$.
 Let  $x\in \cF$ and $r\in [0,1)$. Since $S_1,\ldots, S_n$ are  isometries with orthogonal
 ranges,  relation \eqref{ooga}
implies
\begin{equation*}
\begin{split}
&\left(\sum_{i=1}^n I_\cD\otimes S_i^*\otimes rR_i\right)
(\Gamma\otimes I_{F^2(H_n)})(x\otimes 1)\\
&\qquad= \sum_{i=1}^n\left[(I_\cD\otimes S_i^*)
\left(E_\cD\Omega_1x+\left[ I_\cD\otimes S_1,\ldots, I_\cD\otimes
S_n\right] \left[\begin{matrix} \Gamma P_1\\
\vdots\\
\Gamma P_n\end{matrix} \right] \Omega_2 x\right)\otimes
re_i\right]\\
&\qquad= \sum_{i=1}^n(\Gamma P_i\Omega_2x\otimes
re_i)=(\Gamma\otimes I_{F^2(H_n)})\sum_{i=1}^n(  P_i\Omega_2x\otimes
re_i).
\end{split}
\end{equation*}
Using  relation \eqref{F-I}, we have \ $F(rR_1,\ldots,
rR_n)-I=A_r+B_r$, $r\in [0,1)$,  where $rR:=(rR_1,\ldots, rR_n)$,
\begin{equation*}
\begin{split}
A_r&:=2(\Gamma^*\otimes I_{F^2(H_n)}) \left(I_{\cD\otimes
F^2(H_n)\otimes \cH}- \sum_{i=1}^n I_\cD\otimes S_i^*\otimes
rR_i\right)^{-1}\left(\sum_{i=1}^n I_\cD\otimes
S_i^*\otimes rR_i\right)(\Gamma\otimes I_{F^2(H_n)}), \text{and} \\
B_r&:=  2(D_\Gamma\otimes I_{F^2(H_n)}) \left[ I-\sum_{i=1}^n
(I_{\cD_A}\otimes rR_i)\varphi_i(rR)\right]^{-1}\left[ \sum_{i=1}^n
(I_{\cD_A}\otimes rR_i)\varphi_i(rR)\right](D_{\Gamma}\otimes
 I_{F^2(H_n)}).
\end{split}
\end{equation*}

  Taking into account the above calculations, we
deduce that
\begin{equation*}
\begin{split}
&[F(rR_1,\ldots, rR_n)-I](x\otimes 1)\\
&=2(\Gamma^*\otimes I_{F^2(H_n)}) \left(I_{\cD\otimes
F^2(H_n)\otimes \cH}- \sum_{i=1}^n I_\cD\otimes S_i^*\otimes
rR_i\right)^{-1} (\Gamma\otimes
I_{F^2(H_n)})\sum_{i=1}^n(  P_i\Omega_2x\otimes re_i)\\
&\quad +B_r(x\otimes 1)
\end{split}
\end{equation*}
for $x\in \cF$. Hence and  due to  \eqref{F+I}, we obtain
\begin{equation*}
\begin{split}
&[F(rR_1,\ldots, rR_n)-I](x\otimes 1)\\
& =[F(rR_1,\ldots,
rR_n)+I]\sum_{i=1}^n(  P_i\Omega_2x\otimes re_i)\\
&\quad -
 2(D_\Gamma\otimes I_{F^2(H_n)}) \left[ I-\sum_{i=1}^n
(I_{\cD_A}\otimes rR_i)\varphi_i(rR)\right]^{-1} (D_{\Gamma}\otimes
 I_{F^2(H_n)}) \sum_{i=1}^n(  P_i\Omega_2x\otimes re_i)
  +B_r(x\otimes 1)\\
&=[F(rR_1,\ldots, rR_n)+I]\sum_{i=1}^n(  P_i\Omega_2x\otimes re_i)
 + 2(D_\Gamma\otimes I_{F^2(H_n)}) \left[ I-\sum_{i=1}^n
(I_{\cD_A}\otimes rR_i)\varphi_i(rR)\right]^{-1}\chi_r,
\end{split}
\end{equation*}
where
$$\chi_r:= \sum_{i=1}^n (I_{\cD_A}\otimes
rR_i)\varphi_i(rR)(D_{\Gamma}\otimes
 I_{F^2(H_n)})(x\otimes 1)-(D_{\Gamma}\otimes
 I_{F^2(H_n)})\sum_{i=1}^n(  P_i\Omega_2x\otimes re_i).
 $$
 Consequently, we have
\begin{equation}
\begin{split}
\label{FF}
 &[F(rR_1,\ldots, rR_n)+I]^{-1}[F(rR_1,\ldots,
rR_n)-I](x\otimes1) \\
&\qquad = \sum_{i=1}^n(  P_i\Omega_2x\otimes re_i)
 +2[F(rR_1,\ldots, rR_n)+I]^{-1}(D_\Gamma\otimes
I_{F^2(H_n)}) \\
&\qquad \qquad \qquad \times \left[ I-\sum_{i=1}^n (I_{\cD_A}\otimes
rR_i)\varphi_i(rR)\right]^{-1}\chi_r.
\end{split}
\end{equation}
If  $\Phi\in\cS^\Lambda_{\text{\bf ball}}(B(\cD_{\Gamma},
\cD_{\Gamma}^{(n)} ))$, then due to relation \eqref{cond-Phi}, we
have
$$
\Phi(rR_1,\ldots, rR_n)(y\otimes 1)=\Lambda y\otimes 1
  $$
  for any $y\in \cF_\Gamma$ and $r\in [0,1)$.
  Using the definition of $\cF_\Gamma$ and relations \eqref{Lambd},
  \eqref{Lambd2}, we deduce that, for any $x\in \cF$,
\begin{equation*}
\begin{split}
\varphi_j(rR_1,\ldots, rR_n)(D_\Gamma x\otimes 1)&= (P_j\otimes
I_{F^2(H_n)}) \Phi(rR_1,\ldots, rR_n)(D_\Gamma x\otimes 1)\\
&= (P_j\otimes I_{F^2(H_n)})  (\Lambda D_\Gamma x\otimes 1) \\
&=P_j \left(\oplus_{i=1}^n D_\Gamma\right) \Omega_2x\otimes 1\\
&=D_\Gamma P_j\Omega_2x\otimes 1
\end{split}
\end{equation*}
for any $x\in \cF$. Now, it is clear that $\chi_r=0$. Due to
relation \eqref{FF}, we have
$$[F(rR_1,\ldots, rR_n)+I]^{-1}[F(rR_1,\ldots, rR_n)-I](x\otimes1)\\
= \sum_{i=1}^n(  P_i\Omega_2x\otimes re_i).
$$
Hence and using \eqref{M-def2}, we have
\begin{equation*}
\begin{split}
M_j(rR_1,\ldots, rR_n)(x\otimes 1)&= \frac{1}{r} (I_{\cD_A}\otimes
R_j^*)[F(rR_1,\ldots, rR_n)+I]^{-1}[F(rR_1,\ldots,
rR_n)-I](x\otimes1)\\
&= P_j\Omega_2x\otimes 1
\end{split}
\end{equation*}
for any $x\in \cF$ and $j=1,\ldots, n$. Consequently,
$M(rR_1,\ldots, rR_n)(x\otimes 1)=\Omega_2x\otimes 1$  for any $x\in
\cF$, i.e., $M|_\cF=\Omega_2$.

Conversely, if $M|_\cF=\Omega_2$, then, due to relation
\eqref{M-def2}, we have
$$
[F(rR_1,\ldots, rR_n)+I]^{-1}[F(rR_1,\ldots, rR_n)-I](x\otimes1)=
\sum_{i=1}^n P_j\Omega_2x\otimes r e_i.
$$
Using relation \eqref{FF}, we obtain
$$
2[F(rR_1,\ldots, rR_n)+I]^{-1}(D_\Gamma\otimes I_{F^2(H_n)}) \left[
I-\sum_{i=1}^n (I_{\cD_A}\otimes
rR_i)\varphi_i(rR)\right]^{-1}\chi_r=0.
$$
Since $\chi_r$ has the range in $\cD_\Gamma\otimes F^2(H_n)$, the
operator $I-\sum_{i=1}^n (I_{\cD_A}\otimes rR_i)\varphi_i(rR)$  in
invertible on the Hilbert space $\cD_\Gamma\otimes F^2(H_n)$, and
$D_\Gamma\otimes I_{F^2(H_n)}$ is one-to-one on $\cD_\Gamma\otimes
F^2(H_n)$, we deduce that $\chi_r=0$.  Consequently, we have
$$
\sum_{i=1}^n (I_{\cD_\Gamma}\otimes rR_i)\left[
\varphi_i(rR_1,\ldots, rR_n)(D_\Gamma x\otimes 1)-D_\Gamma
P_i\Omega_2 x\otimes 1\right]=0, \quad x\in\cF.
$$
Since $R_1,\ldots, R_n$ are isometries with orthogonal ranges, we
deduce that
$$
\varphi_i(rR_1,\ldots, rR_n)(D_\Gamma x\otimes 1)=D_\Gamma
P_i\Omega_2x\otimes 1,\qquad i=1,\ldots,n,
$$
for any $x\in \cF$. On the other hand, due to \eqref{Lambd} and
  \eqref{Lambd2}, we have
$$
D_\Gamma P_j\Omega_2x\otimes 1= (P_j\otimes I_{F^2(H_n)})  (\Lambda
D_\Gamma x\otimes 1),\qquad i=1,\ldots,n.
$$
Combining these relations, we deduce that $ \Phi(rR_1,\ldots,
rR_n)(y\otimes 1)=\Lambda y\otimes 1
  $
  for any $y\in \cF_\Gamma$ and $r\in [0,1)$. Therefore,
  $\Phi\in\cS^\Lambda_{\text{\bf ball}}(B(\cD_{\Gamma},
\cD_{\Gamma}^{(n)} ))$, which proves part (i).

Assume now  that $\Gamma$ satisfies \eqref{DSGA}   and
$\Phi:=\left[\begin{matrix}
\varphi_1\\
\vdots\\
\varphi_n\end{matrix}\right]$ is in $\cS^\Lambda_{\text{\bf
ball}}(B(\cD_{\Gamma}, \cD_{\Gamma}^{(n)} ))$. The result of   part
(i)   shows that $M:=\left[\begin{matrix}
M_1\\
\vdots\\
M_n\end{matrix}\right]$ has the property that $M|_\cF=\Omega_2$. In
what follows we will use  the fact that
$J_\Gamma\left[\begin{matrix}
\varphi_1\\
\vdots\\
\varphi_n\end{matrix}\right]=\left[\begin{matrix} L\\
M_1\\
\vdots\\
M_n\end{matrix}\right]$ and that  relations  \eqref{L-def2},
\eqref{F-def2},   and \eqref{M-def2}  hold. First, notice that
\eqref{M-def2} implies
\begin{equation*}
\begin{split}
I -\sum_{i=1}^n (I_{\cD_A}\otimes
rR_i)M_i (rR_1,\ldots, rR_n)&=I-[F(rR_1,\ldots, rR_n)-I][F(rR_1,\ldots, rR_n)+I]^{-1}\\
&=2[F(rR_1,\ldots, rR_n)+I]^{-1}
\end{split}
\end{equation*}
for any $r\in [0,1)$. Hence and using relation \eqref{L-def2}, we
get
\begin{equation}
\label{L-Phi}
\begin{split}
L(rR_1,\ldots, rR_n)&=2\Theta(rR_1,\ldots, rR_n)[F(rR_1,\ldots, rR_n)+I]^{-1}\\
&=\Theta(rR_1,\ldots, rR_n)\left[I -\sum_{i=1}^n (I_{\cD_A}\otimes
rR_i)M_i(rR_1,\ldots, rR_n)\right].
\end{split}
\end{equation}
Therefore,   since $M|_\cF=\Omega_2$,  we have
\begin{equation*}
\begin{split}
&L(rR_1,\ldots, rR_n)(x\otimes 1)\\
&=\Theta(rR_1,\ldots,
rR_n)(x\otimes 1) -\Theta(rR_1,\ldots, rR_n)\sum_{i=1}^n
(I_{\cD_A}\otimes rR_i)M_i(rR_1,\ldots, rR_n)(x\otimes 1)\\
&=\Theta(rR_1,\ldots, rR_n)(x\otimes 1) -\Theta(rR_1,\ldots,
rR_n)\sum_{i=1}^n (P_i\Omega_2 x\otimes re_i)
\end{split}
\end{equation*}
for any $x\in \cF$.  Since $L$ is a bounded free holomorphic
function, $\tilde L:=\text{\rm SOT-}\lim_{r\to 1} L(rR)$ exists and
it is in the operator space  $B(\cD_A,\cD)\bar\otimes  R_n^\infty$.
Taking $r\to 1$ in the relation above and using \eqref{ooga}, we
obtain
\begin{equation*}
\begin{split}
L(rR_1,\ldots, rR_n)(x\otimes 1)&= \Gamma x-\lim_{r\to
1}\Theta(rR_1,\ldots, rR_n)\sum_{i=1}^n
(P_i\Omega_2 x\otimes re_i)\\
&=\Omega_1 x\otimes 1+ \sum_{i=1}^n (I_\cD\otimes S_i) \Gamma
P_i\Omega_2 x-\lim_{r\to 1}\Theta(rR_1,\ldots, rR_n)\sum_{i=1}^n
(P_i\Omega_2 x\otimes re_i).
\end{split}
\end{equation*}
Now, assume that $\Theta$ has the representation $\Theta(Z_1,\ldots,
Z_n)=\sum_{k=0}^\infty \sum_{|\alpha|=k} A_{(\alpha)} \otimes
Z_\alpha$  on $ [B(\cZ)^n]_1, $ with $\sum_{\alpha\in
F_n^+}A_{(\alpha)}^*A_{(\alpha)}\leq I$. Then $\Gamma
y=\sum_{\alpha\in F_n^+}A_{(\alpha)}y\otimes e_{\tilde \alpha}$,\
$y\in \cD_A$,  and
$$
\sum_{i=1}^n (I_\cD\otimes S_i) \Gamma P_i\Omega_2 x=
\sum_{\alpha\in F_n^+}A_{(\alpha)}P_i\Omega_2 x\otimes e_{g_i\tilde
\alpha}, \quad x\in \cF.
$$
On the other hand, we have
\begin{equation*}
\Theta(rR_1,\ldots, rR_n)\left(\sum_{i=1}^n (P_i\Omega_2 x\otimes
re_i)\right) =\sum_{i=1}^n \sum_{\alpha\in \FF_n^+} A_{(\alpha)} P_i
\Omega_2 x\otimes r^{|\alpha|+1} e_{g_i\tilde \alpha}.
\end{equation*}
Consequently,
\begin{equation}
\label{lim-The} \lim_{r\to 1}\Theta(rR_1,\ldots,
rR_n)\left(\sum_{i=1}^n (P_i\Omega_2 x\otimes re_i)\right)
=\sum_{i=1}^n (I_\cD\otimes S_i) \Gamma P_i\Omega_2 x.
\end{equation}
Hence, $\tilde L(x\otimes 1)= \Omega_1x\otimes 1$ for $x\in \cF$,
which implies $L|_\cF=\Omega_1$.

Conversely, assume that $
\Psi:=\left[\begin{matrix} L\\
M_1\\
\vdots\\
M_n\end{matrix}\right]$   is in $S_{\bf ball}^{\Omega}(B(\cD_A,
\cD\oplus   \cD_A^{(n)}))$ and let $M:=\left[\begin{matrix}
M_1\\
\vdots\\
M_n\end{matrix}\right]$. Then we have
\begin{equation*}
\begin{split}
L(rR_1,\ldots, rR_n)(y\otimes 1)&=\Omega_1y\otimes 1, \ y\in \cF, \
\text{ and }\\
M(rR_1,\ldots, rR_n)(y\otimes 1)&=\Omega_2y\otimes 1, \ y\in \cF.
\end{split}
\end{equation*}
Due to relation \eqref{L-Phi}, we deduce that
\begin{equation*}
\begin{split}
\Omega_1 y\otimes 1&= L(rR_1,\ldots, rR_n)(y\otimes
1)\\
&=\Theta(rR_1,\ldots, rR_n)(y\otimes 1) -\Theta(rR_1,\ldots,
rR_n)\sum_{i=1}^n (I_{\cD_A}\otimes rR_i)M_i(rR_1,\ldots
rR_n)(y\otimes 1)\\
&=\Theta(rR_1,\ldots, rR_n)(y\otimes 1) -\Theta(rR_1,\ldots,
rR_n)\sum_{i=1}^n (I_{\cD_A}\otimes rR_i) (P_i\Omega_2 y\otimes 1)
\end{split}
\end{equation*}
for any $y\in \cF$.  As before (see \eqref{lim-The}), taking $r\to
1$, we get
$$
\Omega_1y\otimes 1=\Gamma y-\sum_{i=1}^n (I_{\cD_A}\otimes rR_i)
(P_i\Omega_2 y\otimes 1), y\in \cF,
$$
which shows that $\Gamma$ satisfies relation \eqref{ooga}. Hence,
and using   part  (i), we deduce that $\Phi \in\cS_{\bf
ball}^{\Lambda}(B(\cD_\Gamma,  \cD_\Gamma^{(n)}))$. The proof is
complete.

\end{proof}

Now we can  prove  the following generalized  noncommutative
commutant lifting theorem, which is the   main result  of this
section.

\begin{theorem}
\label{main} Let  $\{A,T,V,C,Q\}$ be a  data set. Then any solution
of the   {\bf GNCL} problem is given by
 \begin{equation}
 \label{B-rep}
  B=\left[\begin{matrix}A\\
  \Gamma D_{A}
  \end{matrix}
  \right]: \cX\to \cH\oplus [\cD\otimes F^2(H_n)],
  \end{equation}
 where $\Gamma:\cD_{A}\to \cD\otimes  F^2(H_n)$ is the symbol of  a free holomorphic function
 $\Theta \in H^2_{\bf ball} (B(\cD_A, \cD))$
 given by
\begin{equation}
\label{Th-equ}
  \Theta(Z)=L(X)\left[ I_{\cD_A\otimes \cZ}-
\sum_{i=1}^n(I_{\cD_A}\otimes Z_i)M_i(Z)\right]^{-1}\quad  \text{
for
  } \ Z:=(Z_1,\ldots, Z_n)\in [B(\cZ)^n]_1,
\end{equation}
where $\left[\begin{matrix} L\\
M_1\\
\vdots\\
M_n\end{matrix}\right]$  is an arbitrary element in  the
noncommutative Schur class $\cS_{\bf ball}^{\Omega}(B(\cD_A,
\cD\oplus   \cD_A^{(n)}))$.
\end{theorem}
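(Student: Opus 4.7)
The plan is to assemble three ingredients already in place: the reduction of the {\bf GNCL} problem to finding a contraction $\Gamma:\cD_A\to\cD\otimes F^2(H_n)$ satisfying \eqref{DSGA}; the Schur-type description of the unit ball of $H^2_{\bf ball}$ supplied by Corollary \ref{caract-2} and Theorem \ref{one-to-one}; and the translation of the intertwining condition into a boundary condition on the Schur parameter afforded by Lemma \ref{main-ingredient}. Given these, the theorem becomes a bookkeeping argument with no further computation.

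For the forward direction, take any solution $B$ of the {\bf GNCL} problem. The matricial form \eqref{decomp} produces a contraction $\Gamma:\cD_A\to\cD\otimes F^2(H_n)$, and the equivalence $B\leftrightarrow \Gamma$ derived before \eqref{DSGA} shows $\Gamma$ satisfies \eqref{DSGA}. Since $\|\Gamma\|\leq 1$, $\Gamma$ is the symbol of a free holomorphic function $\Theta\in H^2_{\bf ball}(B(\cD_A,\cD))$ with $\|\Theta\|_2\leq 1$. Corollary \ref{caract-2} then yields a Schur representation \eqref{Th-equ} for some $\Psi=\left[\begin{matrix} L \\ M_1\\ \vdots \\ M_n\end{matrix}\right]\in\cS_{\bf ball}(B(\cD_A,\cD\oplus\cD_A^{(n)}))$; by Theorem \ref{one-to-one} the totality of such $\Psi$ is exactly $\{J_\Gamma\Phi:\Phi\in\cS_{\bf ball}(B(\cD_\Gamma,\cD_\Gamma^{(n)}))\}$. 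The class $\cS^\Lambda_{\bf ball}(B(\cD_\Gamma,\cD_\Gamma^{(n)}))$ is nonempty, since $\Lambda P_{\cF_\Gamma}\otimes I$ lies in it, so I can select $\Phi$ from it; Lemma \ref{main-ingredient}(ii) then guarantees that the corresponding $\Psi=J_\Gamma\Phi$ belongs to $\cS^\Omega_{\bf ball}(B(\cD_A,\cD\oplus\cD_A^{(n)}))$. This represents $B$ in the asserted form.

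For the converse, start with an arbitrary $\Psi\in\cS^\Omega_{\bf ball}(B(\cD_A,\cD\oplus\cD_A^{(n)}))$ and define $\Theta$ by \eqref{Th-equ}. Corollary \ref{caract-2} places $\Theta$ in $H^2_{\bf ball}(B(\cD_A,\cD))$ with $\|\Theta\|_2\leq 1$, so its symbol $\Gamma$ is a contraction $\cD_A\to\cD\otimes F^2(H_n)$; by Theorem \ref{one-to-one}, there exists a (unique) $\Phi\in\cS_{\bf ball}(B(\cD_\Gamma,\cD_\Gamma^{(n)}))$ with $\Psi=J_\Theta\Phi=J_\Gamma\Phi$. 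The ``only if'' part of Lemma \ref{main-ingredient}(ii) then forces $\Gamma$ to satisfy \eqref{DSGA}, and by the reduction recorded before \eqref{DSGA} this is exactly the statement that $B=\left[\begin{matrix} A\\ \Gamma D_A\end{matrix}\right]$ solves the {\bf GNCL} problem.

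The substantive step is Lemma \ref{main-ingredient}, which already encodes the hard content: that the intertwining condition \eqref{DSGA} on $\Gamma$ is equivalent, modulo the Schur parametrization, to the boundary prescriptions $L|_\cF=\Omega_1$ and $M|_\cF=\Omega_2$. With that lemma granted, the present theorem requires no further analytic estimates; it is a direct combinatorial assembly, the only non-routine observation being that $\cS^\Lambda_{\bf ball}$ is nonempty (needed only to see that the described family of solutions is not vacuous when $\Gamma$ fails to be an isometry).
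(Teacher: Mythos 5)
Your proposal is correct and follows essentially the same route as the paper: reduce the {\bf GNCL} problem to the intertwining condition \eqref{DSGA} on $\Gamma$, invoke Corollary \ref{caract-2} and Theorem \ref{one-to-one} for the Schur parametrization $\Psi=J_\Gamma\Phi$, and use Lemma \ref{main-ingredient}(ii) in both directions, together with the nonemptiness of $\cS^\Lambda_{\bf ball}(B(\cD_\Gamma,\cD_\Gamma^{(n)}))$ witnessed by $\Lambda P_{\cF_\Gamma}\otimes I$. No gaps.
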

\begin{proof}
Assume that $\Psi:= \left[\begin{matrix} L\\
M_1\\
\vdots\\
M_n\end{matrix}\right]$  is an arbitrary element in  $\cS_{\bf
ball}^{\Omega}(B(\cD_A, \cD\oplus  \cD_A^{(n)}))$ and let $\Theta$
be given by \eqref{Th-equ}. According to Corollary \ref{caract-2},
$\Theta$ is a free holomorphic function in $H_{\text{\bf ball}}^2
(B(\cD_A,\cD))$ and $\|\Theta\|_2\leq 1$.
Using Theorem \ref{one-to-one}, we deduce that  $\Psi=J_\Theta\Phi$
for a unique $\Phi$ in $\cS_{\text{\bf ball}}(B(\cD_{\Gamma},
\cD_{\Gamma}^{(n)} ))$. Now, since $\Psi  \in S_{\bf
ball}^{\Omega}(B(\cD_A, \cD\oplus   \cD_A^{(n)}))$, we can use Lemma
\ref{main-ingredient} to deduce that $\Gamma$ satisfies relation
\eqref{DSGA}. Therefore, $B$ is a solution of the {\bf GNCL}
problem.

Conversely, assume that $B$ is a solution of the {\bf GNCL} problem.
Then $B$ has a representation \eqref{B-rep}, where $\Gamma:\cD_A\to
\cD\otimes F^2(H_n)$ is a contraction satisfying \eqref{DSGA}. We
recall that $\cS^\Lambda_{\text{\bf ball}}(B(\cD_{\Gamma},
\cD_{\Gamma}^{(n)} ))$ is nonempty. Let $\Phi\in
\cS^\Lambda_{\text{\bf ball}}(B(\cD_{\Gamma}, \cD_{\Gamma}^{(n)} ))$
and set
$\Psi:= \left[\begin{matrix} L\\
M_1\\
\vdots\\
M_n\end{matrix}\right]:=J_\Gamma \Phi$ (see \eqref{J-ga}). Applying
again Lemma \ref{main-ingredient}, we deduce that $\Psi$ is in
$\cS_{\bf ball}^{\Omega}(B(\cD_A, \cD\oplus   \cD_A^{(n)}))$. Now,
using Theorem \ref{one-to-one}, we deduce that $\Gamma$ is the
symbol of    a free holomorphic function $\Theta\in H_{\text{\bf
ball}}^2 (B(\cD_A,\cD))$  satisfying \eqref{Th-equ}. This completes
the proof.
\end{proof}

 To obtain a refinement of Theorem \ref{main}, we need the
 following result.

\begin{lemma}
\label{factorization} Let $\cM,\cL$ be Hilbert spaces and $\Phi$ be
a bounded free holomorphic function  with coefficients in
$B(\cM,\cL)$. Let $\cN$ be a subspace of $\cM$ and  $A\in
B(\cN,\cL)$. Then $\|\Phi\|_\infty\leq 1$ and $\Phi|_\cN=A$ if and
only if
\begin{equation}
\label{fact} \Phi=(AP_\cN\otimes I)+ (D_{A^*}\otimes I)\Psi
(P_{\cN^\perp}\otimes I)
\end{equation}
 for some $\Psi\in H^\infty_{\bf ball} (B(\cN^\perp, \cD_{A^*}))$
 with $\|\Psi\|_\infty\leq 1$, where $\cN^\perp:=\cM\ominus \cN$. Moreover, $\Phi$ and $\Psi$ in
 \eqref{fact} determine each other uniquely.
\end{lemma}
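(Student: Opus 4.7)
The plan is to identify the free holomorphic function $\Phi$ with its boundary function $\widetilde\Phi:=\text{\rm SOT-}\lim_{r\to 1}\Phi(rR_1,\ldots,rR_n)\in B(\cM,\cL)\bar\otimes\cR_n^\infty$. Membership in this operator space is equivalent to the intertwining relations $(I_\cL\otimes S_j)\widetilde\Phi=\widetilde\Phi(I_\cM\otimes S_j)$ for $j=1,\ldots,n$. A first observation is that the interpolation hypothesis $\Phi|_\cN=A$ (i.e., $C_{(0)}|_\cN=A$ and $C_{(\alpha)}|_\cN=0$ for $|\alpha|\geq 1$) is equivalent to $\widetilde\Phi|_{\cN\otimes F^2(H_n)}=A\otimes I_{F^2(H_n)}$; indeed, the intertwining relation propagates the equality $\widetilde\Phi(x\otimes 1)=Ax\otimes 1$ from $\cN\otimes\CC 1$ to every $\cN\otimes e_\alpha$, $\alpha\in\FF_n^+$.

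For the necessity, I will decompose $\cM\otimes F^2(H_n)=(\cN\otimes F^2(H_n))\oplus(\cN^\perp\otimes F^2(H_n))$ and write $\widetilde\Phi=[\,A\otimes I\ \ T\,]$ in block row form, with $T:=\widetilde\Phi|_{\cN^\perp\otimes F^2(H_n)}$. Since both pieces of the decomposition are invariant under $I_\cM\otimes S_j$, the block $T$ inherits the intertwining property, so $T\in B(\cN^\perp,\cL)\bar\otimes\cR_n^\infty$. From $\widetilde\Phi\widetilde\Phi^*\leq I$, which in block form reads $AA^*\otimes I+TT^*\leq I$, I obtain $TT^*\leq D_{A^*}^2\otimes I$. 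Douglas's factorization lemma then provides a contraction $\Psi:\cN^\perp\otimes F^2(H_n)\to\cD_{A^*}\otimes F^2(H_n)$ with $T=(D_{A^*}\otimes I)\Psi$. The decisive step, and the main obstacle I foresee, is showing that the Douglas factor $\Psi$ itself intertwines the left creation operators and so is the boundary function of an element of $H^\infty_{\bf ball}(B(\cN^\perp,\cD_{A^*}))$. This will follow from the identity
$$(D_{A^*}\otimes I)\bigl[\Psi(I_{\cN^\perp}\otimes S_j)-(I_{\cD_{A^*}}\otimes S_j)\Psi\bigr]=0,$$
derived by applying the intertwining for $T$ on both sides of $T=(D_{A^*}\otimes I)\Psi$, combined with the injectivity of $D_{A^*}\otimes I_{F^2(H_n)}$ on $\cD_{A^*}\otimes F^2(H_n)$ (since $D_{A^*}$ is injective on $\ol{\ran\,D_{A^*}}=\cD_{A^*}$).

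For the sufficiency, the identities $P_{\cN^\perp}|_\cN=0$ and $P_\cN P_{\cN^\perp}=0$ make the verification straightforward: the first yields $\Phi|_\cN=A$ at once, while the second ensures that the two summands in the proposed formula have mutually orthogonal adjoints, giving
$$\Phi(rR_1,\ldots,rR_n)\Phi(rR_1,\ldots,rR_n)^*=(AA^*\otimes I)+(D_{A^*}\otimes I)\Psi(rR)\Psi(rR)^*(D_{A^*}\otimes I),$$
which is bounded above by $(AA^*+D_{A^*}^2)\otimes I=I$ whenever $\|\Psi\|_\infty\leq 1$; hence $\|\Phi\|_\infty\leq 1$. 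The uniqueness of $\Psi$ in the factorization reduces once more to the injectivity of $D_{A^*}\otimes I_{F^2(H_n)}$ on $\cD_{A^*}\otimes F^2(H_n)$: if $\Psi_1$ and $\Psi_2$ both satisfy the factorization, then $(D_{A^*}\otimes I)(\Psi_1-\Psi_2)=0$ with both terms ranging in $\cD_{A^*}\otimes F^2(H_n)$, forcing $\Psi_1=\Psi_2$.
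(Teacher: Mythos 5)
Your proposal is correct and follows essentially the same route as the paper: pass to the boundary function, use the block decomposition $\cM\otimes F^2(H_n)=[\cN\otimes F^2(H_n)]\oplus[\cN^\perp\otimes F^2(H_n)]$, factor the second column through $D_{A^*}\otimes I$ (your appeal to Douglas's lemma is the same fact as the paper's citation of the row-contraction structure in \cite{GGK}), and recover multi-analyticity and uniqueness of $\Psi$ from the injectivity of $D_{A^*}\otimes I_{F^2(H_n)}$ on $\cD_{A^*}\otimes F^2(H_n)$. The only difference is that you also spell out the (easy) sufficiency direction, which the paper leaves implicit.
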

\begin{proof}
Let $\widetilde \Phi=\sum_{\alpha\in \FF_n^+} C_{(\alpha)}\otimes
R_\alpha$ be the Fourier  representation of $\Phi$. The condition
$\Phi|_\cN=A$  is equivalent to $C_{(0)}|_\cN=A$ and
$C_{(\alpha)}|_\cN=0$ for $|\alpha|\geq 1$.  The latter condition is
also equivalent to $ \widetilde \Phi|_{\cN\otimes F^2(H_n)}=A\otimes
I_{F^2(H_n)}. $ With respect to the decomposition $\cM\otimes
F^2(H_n)=[\cN\otimes F^2(H_n)]\oplus [\cN^\perp\otimes F^2(H_n)]$,
the operator $\widetilde \Phi:\cM\otimes F^2(H_n)\to \cL\otimes
F^2(H_n)$ has the matrix representation $\widetilde \Phi=[\widetilde
\Phi|_{\cN\otimes F^2(H_n)}\  \widetilde \Phi|_{\cN^\perp\otimes
F^2(H_n)}]$.
 Taking into account the structure of row contractions
(see \cite{GGK}), $[ A\otimes I_{F^2(H_n)}\ \widetilde \Phi
|_{\cN^\perp\otimes F^2(H_n)}]$ is a contraction if and only if
\begin{equation}
\label{formu}\widetilde  \Phi |_{\cN^\perp\otimes
F^2(H_n)}=(D_{A^*}\otimes I_{F^2(H_n)})\widetilde \Psi
\end{equation}
for a unique  contraction $\widetilde \Psi:\cN^\perp\otimes
F^2(H_n)\to \cD_{A^*}\otimes {F^2(H_n)}$. Moreover, $\widetilde
\Psi$ is unique. Since $\widetilde \Phi$ is a multi-analytic
operator, i.e.,  $ \widetilde \Phi(I_\cM \otimes S_i)=(I_\cL \otimes
S_i) \widetilde \Phi$,\ $ i=1,\ldots,n, $ so is its restriction
$\widetilde \Phi|_{\cN^\perp\otimes F^2(H_n)}$, i.e.,
$$
 \widetilde \Phi|_{\cN^\perp\otimes
F^2(H_n)}(I_{\cN^\perp} \otimes S_i)=(I_\cL \otimes S_i) \widetilde
\Phi|_{\cN^\perp\otimes F^2(H_n)},\qquad i=1,\ldots,n,
$$
Hence, we deduce that
$$
(D_{A^*}\otimes I_{F^2(H_n)})[\widetilde \Psi(I_{\cN^\perp} \otimes
S_i)-(I_{\cD_{A^*}}\otimes S_i)\widetilde \Psi]=0,\qquad i=1,\ldots,
n.
$$
Since $D_{A^*}\otimes I_{F^2(H_n)}$ is one-to-one on
$\cD_{A^*}\otimes F^2(H_n)$, we obtain $ \widetilde
\Psi(I_{\cN^\perp} \otimes S_i)=(I_{\cD_{A^*}}\otimes S_i)\widetilde
\Psi$, \ $i=1,\ldots, n, $ which proves that $\widetilde \Psi$ is a
multi-analytic operator. According to \cite{Po-holomorphic} (see
also \cite{Po-pluriharmonic}), $\widetilde \Psi$ is the boundary
function of a unique  bounded free holomorphic function $\Psi \in
\cS_{\bf ball}(B(\cN^\perp, \cD_{A^*}))$. The proof is complete.
\end{proof}

Using Lemma \ref{factorization}, we obtain the following refinement
of Theorem \ref{main}.
\begin{remark}\label{re1}
In Theorem \ref{main}, there is a one-to-one correspondence between
 the
noncommutative Schur class $\cS_{\bf ball}^{\Omega}(B(\cD_A,
\cD\oplus   \cD_A^{(n)}))$ and the Schur class $\cS_{\bf
ball}(B(\cG,\cD_{\Omega^*}))$, given by the formula
$$
\Psi=(\Omega P_\cF\otimes I)+(D_{\Omega^*}\otimes I)
\Psi_1(P_{\cG}\otimes I),
$$
 where $\Omega$ is defined by
\eqref{OmOm},  $\cG:=\cD_A\ominus \cF$,  and  $\Psi_1\in \cS_{\bf
ball}(B(\cG,\cD_{\Omega^*}))$. Consequently, Theorem \ref{main} can
be restated and the Schur class $\cS_{\bf ball}^{\Omega}(B(\cD_A,
\cD\oplus   \cD_A^{(n)}))$ can be replaced by $\cS_{\bf
ball}(B(\cG,\cD_{\Omega^*}))$.
\end{remark}

    The following result is an addition to  Theorem \ref{main}.

\begin{theorem}
\label{one-to-one1} Let $B$ be a solution of the {\bf GNCL} problem
with the data set $\{A,T,V,C,Q\}$, let $\Gamma$ be the contraction
determined by $B$, and  $\Theta \in H^2_{\bf ball} (B(\cD_A, \cD))$
 be  the free holomorphic function with symbol
$\Gamma$. Then the restriction of the map $J_\Gamma$ (defined by
\eqref{J-ga}) to $\cS_{\bf ball}^{\Lambda}(B(\cD_\Gamma,
\cD_\Gamma^{(n)}))$ is a one-to-one function onto   the set of all
functions
$\left[\begin{matrix} L\\
M_1\\
\vdots\\
M_n\end{matrix}\right]$ in  the Schur class  $\cS_{\bf
ball}^{\Omega}(B(\cD_A, \cD\oplus   \cD_A^{(n)}))$ and satisfying
the equation
\begin{equation*}
  \Theta(Z)=L(Z)\left[ I_{\cD_A\otimes \cZ}-
\sum_{i=1}^n(I_{\cD_A}\otimes Z_i)M_i(Z)\right]^{-1}\quad  \text{
for
  } \ Z:=(Z_1,\ldots, Z_n)\in [B(\cZ)^n]_1.
\end{equation*}
\end{theorem}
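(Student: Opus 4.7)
The plan is to derive this theorem as a direct consequence of two results already established in the paper: Theorem \ref{one-to-one} (which gives a parametrization of all factorizations of a given $\Theta$ in the unit ball of $H^2_{\bf ball}$) and Lemma \ref{main-ingredient}(ii) (which matches the $\Lambda$-side condition with the $\Omega$-side condition in the presence of \eqref{DSGA}). First I would note that because $B$ is a solution of the \textbf{GNCL} problem with matrix representation \eqref{decomp}, the contraction $\Gamma:\cD_A\to \cD\otimes F^2(H_n)$ satisfies relation \eqref{DSGA}. Moreover, since $\Gamma$ is a contraction, its associated free holomorphic function $\Theta$ lies in the unit ball of $H^2_{\bf ball}(B(\cD_A,\cD))$, so Theorem \ref{one-to-one} is available for this $\Theta$; note also that the map $J_\Gamma$ defined in \eqref{J-ga}--\eqref{M-def2} coincides, by inspection of the defining formulas, with the map $J_\Theta$ of Theorem \ref{one-to-one}.

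With these identifications in place, Theorem \ref{one-to-one} already provides a bijection between the full Schur class $\cS_{\text{\bf ball}}(B(\cD_\Gamma,\cD_\Gamma^{(n)}))$ and the set of all $\left[\begin{matrix} L\\ M_1\\ \vdots\\ M_n\end{matrix}\right]$ in $\cS_{\text{\bf ball}}(B(\cD_A,\cD\oplus \cD_A^{(n)}))$ satisfying the factorization equation for $\Theta$ displayed in the statement. The next step is to cut this bijection down to the $\Lambda$-constrained subclass. Here Lemma \ref{main-ingredient}(ii) does exactly what is needed: since $\Gamma$ satisfies \eqref{DSGA}, for any $\Phi$ in the full Schur class one has $\Psi:=J_\Gamma\Phi\in\cS_{\bf ball}^{\Omega}(B(\cD_A,\cD\oplus \cD_A^{(n)}))$ if and only if $\Phi\in\cS_{\bf ball}^{\Lambda}(B(\cD_\Gamma,\cD_\Gamma^{(n)}))$. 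Combining these two equivalences, the restriction of $J_\Gamma$ to $\cS_{\bf ball}^{\Lambda}$ takes values precisely in the intersection of $\cS_{\bf ball}^{\Omega}$ with the set of factorizations of $\Theta$.

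Injectivity on the restricted domain is inherited from the global injectivity of $J_\Theta$ in Theorem \ref{one-to-one}. For surjectivity I would argue in reverse: given any $\Psi=\left[\begin{matrix} L\\ M_1\\ \vdots\\ M_n\end{matrix}\right]$ that lies in $\cS_{\bf ball}^{\Omega}$ and factorizes $\Theta$, Theorem \ref{one-to-one} produces a unique $\Phi\in\cS_{\text{\bf ball}}(B(\cD_\Gamma,\cD_\Gamma^{(n)}))$ with $J_\Gamma\Phi=\Psi$; Lemma \ref{main-ingredient}(ii), applied in the opposite direction, then forces this $\Phi$ to actually lie in $\cS_{\bf ball}^{\Lambda}$. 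This completes the description of the restricted map as a bijection onto the asserted set.

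I do not expect any genuine obstacle in this argument; the only point requiring attention is purely bookkeeping, namely verifying that the various parametrizations (the map $J_\Gamma$ defined via \eqref{L-def2}--\eqref{M-def2}, the Schur classes $\cS_{\bf ball}^{\Lambda}$ and $\cS_{\bf ball}^{\Omega}$, and the hypothesis that $\Gamma$ satisfies \eqref{DSGA}) line up exactly with the hypotheses of Theorem \ref{one-to-one} and Lemma \ref{main-ingredient}(ii). No additional computation beyond what those two results already contain should be required.
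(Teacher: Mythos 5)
Your proposal is correct and follows essentially the same route as the paper's own proof: invoke Theorem \ref{one-to-one} (noting $J_\Gamma=J_\Theta$ and $\|\Theta\|_2\leq 1$ since $\Gamma$ is a contraction) to get the bijection between the full Schur class $\cS_{\text{\bf ball}}(B(\cD_\Gamma,\cD_\Gamma^{(n)}))$ and the factorizations of $\Theta$, then use Lemma \ref{main-ingredient}(ii) together with the fact that $\Gamma$ satisfies \eqref{DSGA} to match the $\Lambda$-constrained subclass with the $\Omega$-constrained one. No gap.
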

\begin{proof}
Since $B$ be a solution of the {\bf GNCL} problem, $\Gamma$
satisfies  relation \eqref{DSGA}. Then $\Gamma=\Gamma_\theta$,
where $\Theta$
is given as  above and $\Psi:=\left[\begin{matrix} L\\
M_1\\
\vdots\\
M_n\end{matrix}\right]$ is in  the noncommutative Schur class
$\cS_{\bf ball}^\Omega (B(\cD_A, \cD\oplus   \cD_A^{(n)}))$. Due to
Theorem \ref{one-to-one}, there exists a unique $\Phi\in \cS_{\bf
ball} (B(\cD_\Gamma, \cD_\Gamma^{(n)}))$ such that
$J_\Gamma\Phi=\Psi$. By Lemma \ref{main-ingredient}, we deduce that
$\Phi\in  \cS_{\bf ball}^\Lambda (B(\cD_\Gamma,  \cD_\Gamma^{(n)}))$
and the restriction of $J_\Theta$  to  $\cS_{\bf
ball}^{\Lambda}(B(\cD_\Gamma, \cD_\Gamma^{(n)}))$  is a one-to-one
function onto  the Schur class $\cS_{\bf ball}^{\Omega}(B(\cD_A,
\cD\oplus \cD_A^{(n)}))$. The proof is complete.
\end{proof}

Using Lemma \ref{factorization}, we obtain the following refinement
of Theorem \ref{one-to-one1}.
\begin{remark}\label{re2}
In Theorem \ref{one-to-one1}, there is a one-to-one correspondence
between
 the
noncommutative Schur class $\cS_{\bf ball}^{\Lambda}(B(\cD_\Gamma,
\cD_\Gamma^{(n)}))$ and the Schur class $\cS_{\bf
ball}(B(\cG_\Gamma,\cD_{\Lambda^*}))$, given by the formula
$$
\Phi=(\Lambda P_{\cF_\Gamma}\otimes I)+(D_{\Gamma^*}\otimes I)
\Phi_1 (P_{\cG_\Gamma}\otimes I),
$$
where $\Lambda$ is defined by \eqref{Lambd},
$\cG_\Lambda:=\cD_\Gamma \ominus \cF_\Gamma$,  and $\Phi_1\in
\cS_{\bf ball}(B(\cG_\Gamma,\cD_{\Lambda^*}))$. Consequently,
Theorem \ref{one-to-one1} can be restated and  the Schur class
$\cS_{\bf ball}^{\Lambda}(B(\cD_\Gamma,  \cD_\Gamma^{(n)}))$ can be
replaced by  $\cS_{\bf ball}(B(\cG_\Gamma,\cD_{\Lambda^*}))$.
\end{remark}

Now, we consider a few remarkable particular cases.
\begin{corollary}
\label{treil-Volberg} Let $\{A,T,V,C,Q\}$ be a data set. In the
particular case when  $\cG_i=\cX$ and $C_i=I_\cX$ for $i=1,\ldots,
n$, Theorem \ref{main} provides a description of all solutions of
the multivariable generalization \cite{Po-nehari} of Treil-Volberg
commutant lifting theorem \cite{TV}.
\end{corollary}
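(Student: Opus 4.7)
The plan is to verify, by direct substitution, that the specialization $\cG_i = \cX$, $C_i = I_\cX$ for all $i$ reduces the data set $\{A, T, V, C, Q\}$ and the GNCL problem to the multivariable Treil-Volberg lifting setup formulated in \cite{Po-nehari}, and then simply invoke Theorem \ref{main}.

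First, I would rewrite each hypothesis under the specialization. The compatibility condition \eqref{C-Q} becomes $I_{\cX^{(n)}} \leq [Q_i^* Q_j]_{n\times n}$, which is exactly the standard Treil-Volberg-type condition on $Q = (Q_1, \ldots, Q_n)$. The intertwining relation \eqref{intert-rel} collapses to $T_i A = A Q_i$ for $i = 1, \ldots, n$, and the contractive interpolant conditions $P_\cH B = A$ and $V_i B C_i = B Q_i$ become $P_\cH B = A$ and $V_i B = B Q_i$. These are precisely the defining equations of a contractive lifting in the multivariable Treil-Volberg theorem from \cite{Po-nehari}, so the GNCL problem specializes to that problem on the nose.

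Next, I would trace through how the auxiliary objects used in Theorem \ref{main} simplify. The subspace $\cF$ in \eqref{F} becomes $\cF = \overline{\sum_{i=1}^n D_A Q_i \cX}$, the contraction $\Omega = \left[\begin{smallmatrix} \Omega_1 \\ \Omega_2\end{smallmatrix}\right]: \cF \to \cD \oplus \cD_A^{(n)}$ defined in \eqref{OmOm} takes the form $\Omega_1(\sum D_A Q_i y_i) = \Delta_T(\oplus A y_i)$ and $\Omega_2(\sum D_A Q_i y_i) = \oplus D_A y_i$, and the Schur class $\cS_{\bf ball}^{\Omega}(B(\cD_A, \cD \oplus \cD_A^{(n)}))$ retains the same formal definition relative to this $\Omega$. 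Theorem \ref{main} then parametrizes all contractive interpolants $B$ of the form \eqref{B-rep} via the symbols $\Gamma$ of free holomorphic functions $\Theta$ given by the Schur-type formula \eqref{Th-equ}.

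The only thing left is to observe that this description coincides with (a description of) the solution set of the multivariable Treil-Volberg problem of \cite{Po-nehari}, which is immediate from the matching of the data and the matching of the interpolation conditions. No additional verification is required, since the parametrization machinery of Theorem \ref{main} is insensitive to whether $\cG_i$ equals $\cX$ or not; the specialization simply restricts the class of problems to those considered in \cite{TV, Po-nehari}. The main (very mild) obstacle is bookkeeping: ensuring that the reformulation matches the precise formulation of the multivariable Treil-Volberg theorem in \cite{Po-nehari}, which is settled by inspecting the equations $T_i A = A Q_i$ and $V_i B = B Q_i$ together with the compatibility condition $I \leq [Q_i^* Q_j]$.
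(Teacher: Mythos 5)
Your proposal is correct and matches what the paper intends: the corollary is stated without proof precisely because it is the immediate specialization $\cG_i=\cX$, $C_i=I_\cX$ of Theorem \ref{main}, and your substitutions into \eqref{C-Q}, \eqref{intert-rel}, \eqref{F}, and \eqref{OmOm} are all accurate. Nothing further is needed.
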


Let $T:=(T_1,\ldots, T_n)$, $T_i\in B(\cH)$, be a row contraction
and let $V:=(V_1,\ldots, V_n)$, $V_i\in B(\cK)$, be the minimal
isometric dilation of $T$ on a Hilbert space $\cK\supset \cH$.   Let
$Y:=(Y_1,\ldots, Y_n)$, $Y_i\in B(\cX)$, be a row isometry and let
$A\in B(\cX,\cH)$ be a contraction such that $ T_iA=AY_i$,\quad
$i=1,\ldots,n. $ The {\it noncommutative commutant lifting} ({\bf
NCL}) problem (see \cite{Po-isometric}) is to find $B\in B(\cX,
\cK)$ such that $\|B\|\leq 1$,
$$
P_\cH B=A,\ \text{ and } \ V_iB=BY_i,\quad i=1,\ldots,n.
$$
 A parametrization of all
solutions in the {\it noncommutative commutant lifting theorem}
({\bf NCLT}) in terms of generalized choice sequences was obtained
in \cite{Po-intert}.   Using   the   results of this section, we
obtain   a more refined parametrization and, moreover,  a concrete
Schur type description of all solutions.

\begin{theorem}
\label{NCLT} Let  $\{A,T,V,C,Q\}$ be a  data set in  the particular
case when, for each $i=1,\ldots,n$, $\cG_i=\cX$, $C_i=I_\cX$,
$Q_i=Y_i\in B(\cX)$,  and $Y:=(Y_1,\ldots, Y_n)$ is a row isometry.
Then any solution of the of the {\bf NCL} problem is given by
 \begin{equation}
 \label{B-rep2}
  B=\left[\begin{matrix}A\\
  \Gamma D_{A}
  \end{matrix}
  \right]: \cX\to \cH\oplus [\cD\otimes F^2(H_n)],
  \end{equation}
 where $\Gamma:\cD_{A}\to \cD\otimes  F^2(H_n)$ is the symbol of a   free holomorphic function
 $\Theta \in H^2_{\bf ball} (B(\cD_A, \cD))$
 given by
\begin{equation}
\label{Th-equ2}
  \Theta(Z)=L(X)\left[ I_{\cD_A\otimes \cZ}-
\sum_{i=1}^n(I_{\cD_A}\otimes Z_i)M_i(Z)\right]^{-1}\quad  \text{
for
  } \ Z:=(Z_1,\ldots, Z_n)\in [B(\cZ)^n]_1,
\end{equation}
where $\Psi:=\left[\begin{matrix} L\\
M_1\\
\vdots\\
M_n\end{matrix}\right]$  is an arbitrary element in  the
noncommutative Schur class $\cS_{\bf ball}^{\Omega}(B(\cD_A,
\cD\oplus   \cD_A^{(n)}))$. Moreover, the solution $B$ and the Schur
function $\Psi$ uniquely determine each other via the relations
\eqref{B-rep2} and \eqref{Th-equ2}. There is a unique solution to
the {\bf NCL} problem if and only if $\cF=\cD_A$ or $\Omega
\cF=\cD\oplus \cD_A^{(n)}$, where $\cF$ and $\Omega$ are defined by
\eqref{F} and \eqref{OmOm}, respectively.
\end{theorem}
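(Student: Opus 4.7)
The plan is to specialize Theorem \ref{main} to the NCL data set $\{A,T,V,I_\cX,Y\}$ and then strengthen it to a bijection by proving that $\Lambda$ is in fact unitary in this setting. First I would note that the hypotheses of the GNCL problem are satisfied: the row-isometry condition $Y_i^* Y_j = \delta_{ij} I_\cX$ is precisely equality in \eqref{C-Q} with $C_i = I_\cX$ and $Q_i = Y_i$. By the remark following \eqref{OmOm}, this makes $\Omega$ an isometry, and by the observation after \eqref{Lambd2}, $\Lambda$ too is isometric. The NCL intertwining $T_i A = A Y_i$ and the lifting relation $V_i B = B Y_i$ are exactly the corresponding GNCL relations, so Theorem \ref{main} yields the surjective parametrization $\Psi \mapsto B$ via \eqref{Th-equ2}--\eqref{B-rep2}, and every solution $B$ is of this form.

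The heart of the proof is showing that the parametrization is also injective. For this, I would establish that $\Lambda$ is not merely isometric but unitary. Inspecting \eqref{Lambd} with $C_i = I_\cX$ and $Q_i = Y_i$ shows that the range of $\Lambda$ is $\{\oplus_{i=1}^n D_\Gamma D_A y_i : y_i \in \cX\}$; since $D_A \cX$ is dense in $\cD_A$ and $D_\Gamma \cD_A$ is dense in $\cD_\Gamma$, this range is dense in $\cD_\Gamma^{(n)}$. An isometry has closed range, so $\Lambda$ is in fact surjective, hence unitary. Consequently $\cD_{\Lambda^*} = \{0\}$, and by Remark \ref{re2} the class $\cS_{\bf ball}^\Lambda(B(\cD_\Gamma,\cD_\Gamma^{(n)}))$ consists of exactly one element. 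Now, given a solution $B$, relation \eqref{B-rep2} determines $\Gamma$ uniquely, and hence the free holomorphic function $\Theta$ whose symbol is $\Gamma$ (unique by the symbol--function correspondence of Section 2). Theorem \ref{one-to-one1} identifies those $\Psi \in \cS_{\bf ball}^\Omega$ compatible with \eqref{Th-equ2} for this $\Theta$ with $\cS_{\bf ball}^\Lambda$, which is a singleton; so $\Psi$ is uniquely recovered from $B$. Combined with Theorem \ref{main}, this gives the bijection $B \leftrightarrow \Psi$.

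Finally, for the uniqueness dichotomy, by the bijection just established, uniqueness of the solution is equivalent to $\cS_{\bf ball}^\Omega(B(\cD_A,\cD\oplus\cD_A^{(n)}))$ being a singleton. Remark \ref{re1} describes this class as the bijective image of $\cS_{\bf ball}(B(\cG,\cD_{\Omega^*}))$, which is a singleton iff $\cG = \{0\}$ (that is, $\cF = \cD_A$) or $\cD_{\Omega^*} = \{0\}$. Since $\Omega$ is an isometry, $\cD_{\Omega^*} = \{0\}$ is equivalent to $\Omega$ being onto, i.e.\ $\Omega\cF = \cD \oplus \cD_A^{(n)}$, giving exactly the stated criterion. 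The main technical obstacle throughout is the unitarity of $\Lambda$; this is the precise additional feature of the NCL (row-isometry) setting that is not available for the general GNCL problem and that upgrades the parametrization of Theorem \ref{main} into a genuine one-to-one correspondence.
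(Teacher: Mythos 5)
Your proposal is correct and follows essentially the same route as the paper: specialize Theorem \ref{main}, observe that the row-isometry hypothesis forces equality in \eqref{C-Q} so that $\Omega$ and hence $\Lambda$ are isometric, upgrade $\Lambda$ to a unitary so that $\cD_{\Lambda^*}=\{0\}$ and $\cS_{\bf ball}^{\Lambda}$ is a singleton via Remark \ref{re2} and Theorem \ref{one-to-one1}, and then read off the uniqueness dichotomy from Remark \ref{re1}. Your density argument for the surjectivity of $\Lambda$ (using that $D_A\cX$ is dense in $\cD_A$ and $D_\Gamma\cD_A$ is dense in $\cD_\Gamma$, plus closedness of the range of an isometry) is exactly the detail the paper's proof asserts in passing, so the two arguments coincide.
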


\begin{proof}
The first part of the theorem follows from Theorem \ref{main}. To
prove the second part, let $B$ be a solution of the {\bf NCL}
problem, let $\Gamma$  be  the  contraction determined by $B$, and
$\Theta\in H^2_{\bf ball} (B(\cD_A, \cD))$ be the free holomorphic
function  with symbol  $\Gamma$.
   Due to Theorem \ref{one-to-one1}, to prove that $B$ and $\Psi$ uniquely determine each other,
    it is enough to show that
 $\cS_{\bf ball}^{\Lambda}(B(\cD_\Gamma,
\cD_\Gamma^{(n)}))$ has  just one element.  Since $(Y_1,\ldots,
Y_n)$ is a row isometry, the operator $\Omega$ (see  \eqref{OmOm})
is an isometry and, consequently, so is $\Lambda$. From the
definition of $\Lambda$ (see \eqref{Lambd}), we deduce that the
range of $\Omega$ coincides with $  \cD_\Lambda^{(n)}$. Therefore,
$\Lambda:\cF_\Gamma\to   \cD_\Lambda^{(n)}$ is a unitary operator.
Consequently, $\cD_{\Lambda^*}=\{0\}$. According to Remark
\ref{re2},  we deduce that $\cS_{\bf
ball}(B(\cG_\Gamma,\cD_{\Lambda^*}))$ is a singleton and, therefore,
so is $\cS_{\bf ball}^{\Lambda}(B(\cD_\Gamma, \cD_\Gamma^{(n)}))$.
Therefore, we have proved that any solution $B$ corresponds to a
unique Schur function $\Psi$.
Now, due to Remark \ref{re1}, there is a unique solution of the {\bf
NCL} problem if and only if $\cS_{\bf
ball}(B(\cG,\cD_{\Omega^*}))=\{0\}$. The latter equality holds if
and only if $\cG=\{0\}$ or $\cD_{\Omega^*}=\{0\}$. Since $\Omega$ is
an isometry, the condition $\cD_{\Omega^*}=\{0\}$ is equivalent to
$\Omega \cF=\cD\oplus \cD_A^{(n)}$. The proof is complete.
\end{proof}
We remark that one can easily obtain a version of Theorem \ref{NCLT}
in the more general  setting  of the {\bf NCL} problem when the row
isometry $(Y_1,\ldots, Y_n)$ is replaced by an arbitrary row
contraction.

 Another consequence of Theorem \ref{main} is the following
   multivariable version  \cite{Po-nehari} of
the weighted commutant lifting theorem of  Biswas-Foia\c s-Frazho
\cite{BFF}.

\begin{corollary}
\label{weighted} Let $T:=(T_1,\ldots, T_n)$, $T_i\in B(\cH)$, be a
row contraction and let $V:=(V_1,\ldots, V_n)$, $V_i\in B(\cK)$, be
the minimal isometric dilation of $T$ on a Hilbert space $\cK\supset
\cH$. Let  $A\in B(\cX,\cH)$ be such that $A^*A\leq P$, where $P\in
B(\cX)$ be a positive operator. Let $Y:=(Y_1,\ldots, Y_n)$, $Y_i\in
B(\cX)$, be such that \  $ [Y_i^*PY_j]_{n\times n}\geq [\delta_{ij}
P]_{n\times n} $  and
$$
T_iA=AY_i,\qquad i=1,\ldots,n.
$$
Then there exists  $B\in B(\cX, \cK)$ such that $P_\cH B=A$,
$B^*B\leq P$, and $$
 \ V_iB=BY_i,\quad i=1,\ldots,n.$$

\end{corollary}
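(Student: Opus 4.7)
The plan is to reduce this weighted lifting statement to the generalized noncommutative commutant lifting theorem (Theorem \ref{main}) via a Douglas-style factorization. The key idea is that the inequalities $A^*A \le P$ and the desired $B^*B \le P$ can both be absorbed by writing everything after a factor of $P^{1/2}$, which turns the weighted problem into an unweighted (contractive) {\bf GNCL} problem on the auxiliary Hilbert space $\cX_0 := \overline{P^{1/2}\cX}$.

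More precisely, by the Douglas factorization lemma, the hypothesis $A^*A \le P$ yields a unique contraction $A_0 \in B(\cX_0, \cH)$ such that $A = A_0 P^{1/2}$, where $P^{1/2}$ is viewed as an operator from $\cX$ onto a dense subspace of $\cX_0$. I would then assemble a {\bf GNCL} data set $\{A_0, T, V, C, Q\}$ by taking $\cG_i := \cX$ for each $i$, $C_i := P^{1/2}$, and $Q_i := P^{1/2} Y_i$, all regarded as operators from $\cX$ into $\cX_0$. A routine computation gives $C_i^* C_j = P$ and $Q_i^* Q_j = Y_i^* P Y_j$, so the compatibility condition $[\delta_{ij} C_i^* C_j]_{n\times n} \le [Q_i^* Q_j]_{n\times n}$ is exactly the hypothesis $[\delta_{ij} P]_{n\times n} \le [Y_i^* P Y_j]_{n\times n}$. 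The required intertwining $T_i A_0 C_i = A_0 Q_i$ reads $T_i A_0 P^{1/2} = A_0 P^{1/2} Y_i$, which is $T_i A = A Y_i$, again built into the hypothesis. In particular $A_0$ is a contraction and the {\bf GNCL} hypotheses are met.

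Applying Theorem \ref{main} would then produce a contraction $B_0 \in B(\cX_0, \cK)$ satisfying $P_\cH B_0 = A_0$ and $V_i B_0 P^{1/2} = B_0 P^{1/2} Y_i$ for each $i = 1, \ldots, n$. Setting $B := B_0 P^{1/2} \in B(\cX, \cK)$, one obtains
\begin{equation*}
B^*B = P^{1/2} B_0^* B_0 P^{1/2} \le P^{1/2} P^{1/2} = P,
\qquad
P_\cH B = (P_\cH B_0) P^{1/2} = A_0 P^{1/2} = A,
\end{equation*}
and $V_i B = V_i B_0 P^{1/2} = B_0 P^{1/2} Y_i = B Y_i$, which are exactly the conclusions sought.

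There is no genuine obstacle here, provided the Douglas factorization step is handled with care. The only mildly delicate point is that $P$ need not be uniformly positive, so one must work on the closed subspace $\cX_0 = \overline{P^{1/2} \cX}$ rather than on $\cX$ itself, and verify that the identities defining the {\bf GNCL} data hold on the dense range of $P^{1/2}$ in $\cX_0$ and then extend by continuity. If $P$ were boundedly invertible the argument would become immediate by renorming $\cX$ with the inner product $\langle P\,\cdot\,,\cdot\rangle$, but the general case is handled uniformly by the factorization above.
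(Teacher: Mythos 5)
Your proposal is correct and follows essentially the same route as the paper: factor $A=\widetilde A P^{1/2}$ through $\overline{P^{1/2}\cX}$ via Douglas, take $C_i=P^{1/2}$ and $Q_i=P^{1/2}Y_i$ as the {\bf GNCL} data, apply Theorem \ref{main}, and set $B:=\widetilde B P^{1/2}$. Your explicit verification of the compatibility condition $[\delta_{ij}C_i^*C_j]\leq [Q_i^*Q_j]$ and your care with the codomain $\overline{P^{1/2}\cX}$ are welcome details that the paper leaves implicit.
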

\begin{proof}
Let $\widetilde X:=\overline{P^{1/2}\cX}$. Since $A^*A\leq P$, there
exists a unique contraction $\widetilde A:\widetilde X\to \cH$
satisfying the equation $A=\widetilde A P^{1/2}$. The condition
$T_iA=AY_i$, $ i=1,\ldots,n$, implies $ T_i\widetilde A
C_i=\widetilde A Q_i,\quad i=1,\ldots,n, $ where $C_i=P^{1/2}$ and
$Q_i=P^{1/2} Y_i:\cX\to \cX$ for $i=1,\ldots,n$. Applying Theorem
\ref{main}, we find a contraction $\widetilde B:\widetilde X\to \cK$
such that $P_\cH \widetilde B= \widetilde A$ and $V_i \widetilde B
C_i= \widetilde B Q_i$ for $i=1,\ldots, n$. Setting $B:=\widetilde B
P^{1/2}$, we have
$$
V_iB=V_i \widetilde B C_i= \widetilde B Q_i=\widetilde B P^{1/2} Y_i
=B Y_i
$$
for $i=1,\ldots,n$. Note also that $P_\cH B=P_\cH \widetilde B
P^{1/2}= \widetilde A P^{1/2} A$. Since $\widetilde B$ is a
contraction, we have $B^*B P^{1/2} \widetilde B^* \widetilde B
P^{1/2}\leq P$. This completes the proof.
\end{proof}

In a future paper, we apply the results of the present paper
  to the interpolation theory
setting to  obtain
   parametrizations and complete
 descriptions  of all solutions to the Nevanlinna-Pick,
 Carath\'eodory-Fej\' er, and Sarason type interpolation problems  for
  the noncommutative Hardy spaces $H^\infty_{\bf
 ball}$  and   $H^2_{\bf ball}$, as well as consequences to  (norm constrained)
 interpolation on the unit ball of $\CC^n$.

       %

      \end{document}